\definecolor{darkgreen}{rgb}{0.3,12,0.3}
\definecolor{darkred}{rgb}{1,0.1,0.3}
\newcommand{\R}{\mathbb R}
\newcommand{\N}{\mathbb N}
\newcommand{\E}{\mathbb E}
\newcommand{\Pro}{\mathbb P}
\newcommand{\Var}{\mathrm{Var}}
\newcommand{\Cov}{\mathrm{Cov}}
\def\dint{\textup{d}}
\newcommand{\SSS}{\ensuremath{{\mathbb S}}}
\newcommand{\B}{\ensuremath{{\mathbb B}}}
\renewcommand{\P}{\mathbb{P}}
\DeclareMathOperator{\spn}{span}
\newtheorem{thm}{Theorem}[section]
\newtheorem*{thm*}{Theorem}
\newtheorem{thmalpha}{Theorem}
\newtheorem{lemma}[thm]{Lemma}
\newtheorem{proposition}[thm]{Proposition}
\newtheorem{rmk}[thm]{Remark}
\begin{document}

%%%%%%%%%%%%%%%%%%%%%%%%%%%%%%%%%%%%%%%%%%%%%5

\title[]{Berry-Esseen bounds for random projections of $\ell_p^n$-balls}

\author[S. Johnston]{Samuel Johnston}
\address{Institut f\"ur Mathematik \& Wissenschaftliches Rechnen, University of Graz, Heinrichstra{\ss}e 36, 8010 Graz, Austria} \email{samuel.johnston@uni-graz.at}

\author[J. Prochno]{Joscha Prochno}
\address{Institut f\"ur Mathematik \& Wissenschaftliches Rechnen, University of Graz, Heinrichstra{\ss}e 36, 8010 Graz, Austria} \email{joscha.prochno@uni-graz.at}

\keywords{Berry-Esseen bound, central limit theorem, Grassmannian manifold, uniform distribution, cone measure, orthogonal projection, $\ell_p^n$-ball, random vector}
\subjclass[2010]{Primary: 60F05, 41A25, 52A20 Secondary: 52A23, 52A22}

%\thanks{???}

%\date{\today}

\begin{abstract}
In this work we study the rate of convergence in the central limit theorem for the Euclidean norm of random orthogonal projections of vectors chosen at random from an $\ell_p^n$-ball which has been obtained in [Alonso-Guti\'errez, Prochno, Th\"ale: Gaussian fluctuations for high-dimensional random projections of $\ell_p^n$-balls, Bernoulli 25(4A), 2019, 3139--3174]. More precisely, for any $n\in\N$ let $E_n$ be a random subspace of dimension $k_n\in\{1,\ldots,n\}$, $P_{E_n}$ the orthogonal projection onto $E_n$, and $X_n$ be a random point in the unit ball of $\ell_p^n$. We prove a Berry-Esseen theorem for $\|P_{E_n}X_n\|_2$ under the condition that $k_n\to\infty$. This answers in the affirmative a conjecture of Alonso-Guti\'errez, Prochno, and Th\"ale who obtained a rate of convergence under the additional condition that $k_n/n^{2/3}\to\infty$ as $n\to\infty$. In addition, we study the Gaussian fluctuations and Berry-Esseen bounds in a $3$-fold randomized setting where the dimension of the Grassmannian is also chosen randomly. Comparing deterministic and randomized subspace dimensions leads to a quite interesting observation regarding the central limit behavior. In this work we also discuss the rate of convergence in the central limit theorem of [Kabluchko, Prochno, Th\"ale: High-dimensional limit theorems for random vectors in $\ell_p^n$-balls, Commun. Contemp. Math. (2019)] for general $\ell_q$-norms of non-projected vectors chosen at random in an $\ell_p^n$-ball.
\end{abstract}

\maketitle

%\tableofcontents

% % % % % % % % % % % % % % % % % % %
% % % % % % % % % % % % % % % % % % %
\section{Introduction and results}
% % % % % % % % % % % % % % % % % % %
% % % % % % % % % % % % % % % % % % %

The last decades have shown the power of probability theory in the study of high-dimensional convex bodies. Although high dimensions typically increase the complexity of problems, it has become apparent that convexity enforces a certain regularity in the geometry of the underlying space and in the behavior of random objects. The area dealing with random phenomena as the space dimension tends to infinity is also known as high-dimensional probability theory and it combines powerful methods from probability theory, functional analysis, and convex geometry. One of the first results in this direction is a high-dimensional central limit theorem commonly known as the Poincar\'e-Maxwell-Borel Lemma (see \cite{DF1987}), showing that the first $k$ coordinates of a point uniformly distributed over the $n$-dimensional Euclidean ball or sphere are independent standard normal variables in the limit as $n\to\infty$ with $k$ fixed. Later Stam studied the central limit behavior when $k$ is allowed to vary with the dimension $n$ \cite{Stam1982}.  Probably the best example of the last decade is a celebrated result of Klartag. He proved a central limit theorem for high-dimensional convex bodies, showing that most $k$-dimensional marginals of a random vector uniformly distributed in an isotropic convex body are approximately Gaussian, provided that $k=k_n$ is smaller than $n^\kappa$ for some absolute constant $\kappa\in (0,1)$ known to satisfy $\kappa< 1/14$ \cite{KlartagCLT, KlartagCLT2}. Other than Klartag's central limit theorem there are only a few random geometric quantities that have been shown to satisfy a central limit theorem. For instance, central limit phenomena for the $\log$-volume of random simplices in high dimensions have recently been studied by Grote, Kabluchko, and Th\"ale in \cite{GKT2019}. Then there is the central limit theorem for the volume of $k$-dimensional random projections of the $n$-dimensional cube obtained by Paouris, Pivovarov, and Zinn in \cite{PPZ14}, which in the case $k=1$ was obtained independently by Kabluchko, Litvak, and Zaporozhets \cite{KLZ2015}. In fact, when $k=1$ we can reformulate the result into a central limit theorem for the $\ell_1^n$-norm $\Vert\theta\Vert_1$, where $\theta$ is a random vector uniformly distributed on the Euclidean sphere $\SSS^{n-1}$. This has recently been extended by Kabluchko, Prochno, and Th\"ale to a central limit theorem for arbitrary $\ell_p^n$-balls, $1\leq p\leq\infty$ \cite{KPT2019, KPT2019_II}. Gaussian fluctuation in the Grassmannian setting were then studied by Alonso-Guti\'errez, Prochno, and Th\"ale in \cite{APT2019}. They proved a central limit theorem for the Euclidean norm of random orthogonal projections and also obtained Berry-Esseen bounds on the rate of convergence in their central limit theorem whenever the subspace dimensions grow fast enough as the dimension $n$ of the ambient space tends to infinity.

In this paper we investigate the rate of convergence in the Grassmannian setting  studied in \cite{APT2019}. For $n \in \mathbb{N}$, denote by $E_n$ a random $k_n$-dimensional subspace of $\mathbb{R}^n$, by $P_{E_n}$ the orthogonal projection onto $E_n$, and by $X_n$ a random point in the unit ball of $\ell_p^n$. Alonso-Guti\'errez, Prochno, and Th\"ale proved a Berry-Esseen theorem for the sequence $\|P_{E_n}X_n\|_2$, $n\in\N$ of random variables under the condition that the subspace dimensions $k_n$ tend to infinity and satisfy $k_n/n^{2/3}\to\infty$ as $n\to\infty$. Here we weaken the assumption considerably by showing that the condition $k_n/n^{2/3}\to\infty$ as $n\to\infty$ may be lifted. Contrary to the more analytic approach pursued in \cite{APT2019}, we use a probabilistic one based on the geometry of Kolmogorov-Smirnov distances and the relationship between different Gaussian densities under this metric. One of the ingredients in our approach is giving a bound on the Kolmogorov-Smirnov distance between different Gaussians in terms of the variances. This can be seen as a version of a result appearing in a paper of Brehm and Voigt \cite{BV}. This tool teams up with basic properties of Kolmogorov-Smirnov distances and the classical Berry-Esseen theorem in allowing us to establish convergence in the Grassmannian setting studied in \cite{APT2019}. 

We also discuss a related problem to the one studied in \cite{APT2019}, where the dimension $K_n$ of the Grassmannian is chosen randomly according to a binomial distribution. Here an underlying probabilistic structure makes certain aspects of this $3$-fold randomized problem simpler than its ``deterministic'' counterpart: the sum of $K_n$ random variables may be represented as a sum of $n$ random variables multiplied by indicator functions, and as a result the key random variables involved lend themselves more easily to use in the Berry-Esseen theorem.

In addition to the Grassmannian setting, we provide a Berry-Esseen type central limit theorem for the $\ell_q^n$-norm of a random vector in an $\ell_p^n$ ball, where $0< p \leq \infty $ and $q\neq p$ with $0<q <\infty$. This setting has been investigated by Kabluchko, Prochno, and Th\"ale in \cite{KPT2019, KPT2019_II}. For $q=2$ the Berry-Esseen bound already follows from  \cite[Theorem 1.2]{APT2019}, when the subspace dimensions $k_n$ are chosen to be $n$ (and so $k_n/n^{2/3}$ tends to infinity as $n$ does).

% % % % % % % % % % % % % % % % % % % % % % % % % % % % % %
\subsection{The distributions on the $\ell_p^n$-balls}
% % % % % % % % % % % % % % % % % % % % % % % % % % % % % %
Before stating our main results, we need to introduce our class of probability measures on unit balls of $\ell_p^n$, following the description in \cite{APT2019, BartheGuedonEtAl}. Let $||x||_p := \left( \sum_{ i = 1}^n |x_i|^p \right)^{1/p}$ be the $p$-norm, and let  $\mathbb{B}_p^n$ be the unit ball of $\ell_p^n$. Let $\mathbb{U}_{n,p}$ be the uniform distribution on $\mathbb{B}_p^n$ and let $\mathbb{C}_{n,p}$ be the cone probability measure on its boundary. (We give a full definition for the cone measure on Section \ref{sec:geom}.) Let $\mathbb{W}$ be any Borel probability measure on $[0,\infty)$. In this paper we consider distributions on $\mathbb{B}_p^n$ of the form
\begin{align} \label{P def}
\mathbb{P}_{n,p,\mathbb{W}} := \mathbb{W} \left( \{ 0 \} \right) \mathbb{C}_{n,p} + H \mathbb{U}_{n,p},
\end{align}
where $H: \mathbb{B}_p^n\to\R$ is given by $H(x) = h(||x||_p)$ with
\begin{align} \label{h def}
h(r) := \frac{1}{ \Gamma (1 + \frac{n}{p} ) } \frac{1}{ \left(1 - r^p \right)^{1 + n/p} } \int_0^\infty s^{n/p} e^{ - r^p s / (1 - r^p) } \mathbb{W}(\dint s), \qquad r \in [0,1]. 
\end{align}
In other words this means that
\begin{align*}
\int_{\B_p^n}f(x)\,\mathbb{P}_{n,p,\mathbb{W}}(\dint x) &= \mathbb{W} \left( \{ 0 \} \right)\int_{\SSS_p^{n-1}}f(x)\,\mathbb{C}_{n,p}(\dint x) + \int_{\B_p^n}f(x)\,H(x)\,\mathbb{U}_{n,p}(\dint x)\\
%&=\mathbb{W} \left( \{ 0 \} \right)\int_{\SSS_p^{n-1}}f(x)\,\mathbb{C}_{n,p}(\dint x) + \int_{\B_p^n}f(x)\,h(\|x\|_p)\,\mathbb{U}_{n,p}(\dint x)
\end{align*}
for all non-negative measurable functions $f:\B_p^n\to\R$.

It is a plain that when $\mathbb{W}$ is the Dirac mass at $0$ the measure $\mathbb{P}_{n,p,\mathbb{W}}$ coincides with the cone probability measure $\mathbb{C}_{n,p}$. On the other hand, the choice
\begin{align*}
\mathbb{W}( \dint s) := p^{-1}e^{ - s/p}  \dint s 
\end{align*}
yields that $H\equiv 1$ and $\mathbb{W} \left( \{ 0 \} \right)=0$, and hence in this case $\mathbb{P}_{n,p,\mathbb{W}}$ is equal to the uniform measure on $\mathbb{B}_p^n$. Let us also mention here that when $\mathbb{W}$ is the gamma distribution with a certain parameter, a random vector with law $\mathbb{P}_{n,p,\mathbb{W}}$ may be understood as an $n$ dimensional projection of a random element of $\mathbb{B}_p^{n+m}$ \cite{APT2019, BartheGuedonEtAl}. With these special cases in mind, for the remainder of this article we will work in full generality, with the probability measure $\mathbb{W}$ at hand.

Define
\begin{align*}
M_p(r) := \frac{ p^{r/p} }{ r+1} \frac{ \Gamma(1+\frac{r+1}{p} )}{ \Gamma(1+\frac{1}{p} ) }.
\end{align*} 
As we will see below (Lemma \ref{p Gaussian moments}), the constants $M_p(r)$ correspond to the moments of the so-called $p$-Gaussian distribution, and will  feature in all of our results. Furthermore, in all three of our main results, Theorem \ref{thm:RP 1}, Theorem \ref{thm:RP 2}, and Theorem \ref{thm:q norm} we will encounter the quantity
\begin{align} \label{variance}
\sigma^2(p,q) := \frac{1}{q^2} \left( \frac{ M_p(2q)}{ M_p(q)^2} - 1 \right) - \frac{2}{pq} \left( \frac{M_p(p+q)}{M_p(q)} - 1 \right) + \frac{1}{p^2} \left( M_p(2p) - 1 \right). 
\end{align}
which is the variance of the centered random variable $\frac{ |Z|^q - M_p(q)}{ q M_p(q)} - \frac{ |Z|^p - 1}{ p }$ where $Z$ is $p$-Gaussian. We emphasize that $\sigma^2(p,q)$ is not symmetric in $p$ and $q$.

Finally, the Kolmogorov-Smirnov distance between a pair of random variables is given by the $\sup$-norm distance between their respective distribution functions. Namely, if $X$ and $Y$ are real-valued random variables, the Kolmogorov-Smirnov distance between $X$ and $Y$ is simply
\begin{align} \label{KS def}
\dint_{\mathsf{KS}}(X,Y) := \sup_{ t \in \mathbb{R}}  \big| \P( X \leq t )  - \P( Y \leq t ) \big| .
\end{align}
With these definitions at hand, we are now equipped to state our main results. 

\subsection{The size of random projections of random elements of $\mathbb{B}_p^n$}
We denote by $\mathbb{G}_{n,k}$ the Grassmannian manifold of $k$-dimensional subspaces of $\mathbb{R}^n$. $\mathbb{G}_{n,k}$ can be identified with $\mathcal O(n)/(\mathcal O(k)\times \mathcal O(n-k))$, where $\mathcal O(m)$ stands for the orthogonal group in dimension $m$. There exists a unique probability measure $\nu_{n,k}$ on $\mathbb{G}_{n,k}$ that is invariant under the action of the orthogonal group. The measure $\nu_{n,k}$ has the following construction. Let $\{e_i\}_{1 \leq i \leq n }$ be the standard unit vector basis for $\mathbb{R}^n$, and let $E_k$ denote the span of $\{e_i\}_{1 \leq i \leq k}$. Now let $T$ be a Haar distributed element of $\mathcal O(n)$. Then the (random) image $T E_k$ of the subspace $E_k$ under $T$ has distribution $\nu_{n,k}$. 

We now recall the main result of \cite{APT2019}, concerning the large-$n$ asymptotics of the Euclidean sizes of projections of random variables with law $\mathbb{P}_{n,p,\mathbb{W}}$ onto random subspaces with law $\nu_{n,k}$. 
%\begin{thm}[Theorem 1.1 \cite{APT2019}]
Let $p \neq 2$ and $k_n$ be a sequence of integers such that $1 \leq k_n \leq n$ and $k_n\to\infty$. Assume  that $\lambda_n := \frac{ k_n }{ n }$ converges to some $\lambda \in [0,1]$ as $n \to \infty$. Let $X_n$ be a random vector with law $\mathbb{P}_{n,p,\mathbb{W}}$, and let $E_n$ be a random subspace of $\mathbb{R}^n$ with law $\nu_{n,k_n}$, independent of $X_n$. Define the random variable
\begin{align}\label{def: Y_n}
Y_n := \frac{ n^{1/p}  }{ \sqrt{M_p(2)}} || P_{E_n} X_n||_2 - \sqrt{k_n}.
\end{align}
Then \cite[Theorem 1.1]{APT2019} states that $Y_n$ converges in distribution to a Gaussian random variable $G$  with variance $\lambda \sigma^2(p,2) + \frac{1}{2}(1-\lambda)$,
where $\sigma^2(p,2)$ is obtained by setting $q=2$ in \eqref{variance}. 
%\end{thm}
The authors of \cite{APT2019} also provide a Berry-Esseen type result. Namely, \cite[Theorem 1.2]{APT2019} states that there are constants $c_p,C_p\in(0,\infty)$ depending only on $p$ such that 
\begin{align} \label{APT bound}
\dint_{\mathsf{KS}}(Y_n,G ) \leq C_p  \max \left\{ \frac{ \log (k_n)}{ \sqrt{k_n} } ,  \frac{n}{ k_n^{3/2}}, | \lambda_n - \lambda| ,  \P \Big[  W  > c_p \frac{ n \log (k_n)}{ k_n} \,\Big] \right\},
\end{align} 
where $W$ is a non-negative random variable distributed according to $\mathbb{W}$.
We remark that the efficacy of the bound \eqref{APT bound} breaks down in the case where $\lambda = 0$ and $k_n$ grows to infinity slower than $n^{2/3}$. The first main result of the present article is a considerable strengthening of \eqref{APT bound} and states that the condition $k_n/n^{2/3}\to\infty$ can be removed.
\begin{thmalpha} \label{thm:RP 1}
Let $1\leq p <\infty$ and $p\neq 2$. Let $Y_n$ be a random variable defined as in \eqref{def: Y_n} where $k_n\to\infty$ and $\lambda_n := k_n/n$ converges to $\lambda\in[0,1]$. Then 
\begin{align*}
\dint_{\mathsf{KS}}(Y_n,G ) \leq C_p  \max \left\{ \frac{ \log (k_n)}{ \sqrt{k_n} } ,\,  | \lambda_n - \lambda| ,\,  \P \left[  W  > c_p \frac{ n \log (k_n)}{ k_n}  \right] \right\}\,,
\end{align*} 
where $W$ is a non-negative, independent random variable with distribution $\mathbb{W}$ and $c_p, C_p \in(0,\infty)$ are constants depending only on $p$.
\end{thmalpha}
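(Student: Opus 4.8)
The plan is to reduce, via the known probabilistic representation of $\mathbb{P}_{n,p,\mathbb{W}}$-distributed vectors and a linearisation of $\|P_{E_n}X_n\|_2$, to the classical one-dimensional Berry--Esseen theorem, exploiting only the contraction of $\dint_{\mathsf{KS}}$ under convolution, its invariance under positive scaling, the sup-norm bound on Gaussian densities, and the Brehm--Voigt-type estimate \cite{BV}. By \cite{APT2019,BartheGuedonEtAl} one has $X_n\stackrel{d}{=}\bZ/(\|\bZ\|_p^p+W)^{1/p}$ with $\bZ=(Z_1,\dots,Z_n)$ having i.i.d.\ $p$-Gaussian coordinates, $W$ of law $\mathbb{W}$, and $\bZ,W$ independent, whence $X_n=R_n\,\bZ/\|\bZ\|_p$ with $R_n=(1+W/\|\bZ\|_p^p)^{-1/p}$; moreover, for fixed $v\neq 0$, the ratio $\|P_{E_n}v\|_2^2/\|v\|_2^2$ is $\mathrm{Beta}(k_n/2,(n-k_n)/2)$-distributed independently of the direction of $v$, so one may realise $\|P_{E_n}\bZ\|_2^2\stackrel{d}{=}\|\bZ\|_2^2\,(\sum_{i\le k_n}g_i^2)/(\sum_{i\le n}g_i^2)$ with $(g_i)$ i.i.d.\ standard normal, independent of $\bZ$ and $W$. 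Writing $A_n=\tfrac1n\sum_{i\le n}Z_i^2$, $D_n=\tfrac1n\sum_{i\le n}|Z_i|^p$, $\bar U_m=\tfrac1m\sum_{i\le m}g_i^2$, and using $M_p(p)=1$, this gives
\[
Y_n\stackrel{d}{=}\sqrt{k_n}\Big(R_n\,\big(A_n/M_p(2)\big)^{1/2}\,\big(\bar U_{k_n}/\bar U_n\big)^{1/2}\,D_n^{-1/p}-1\Big).
\]

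On the event $\mathcal G_n$ where $A_n/M_p(2)$, $\bar U_{k_n}$, $\bar U_n$, $D_n$ all lie within $\delta_n:=C_p'\sqrt{(\log k_n)/k_n}$ of $1$ and $W\le c_p\,n\log k_n/k_n$ (so $R_n=1+O((\log k_n)/k_n)$), Taylor expansion of $(r,a,c,f,d)\mapsto(1+r)(1+a)^{1/2}(1+c)^{1/2}(1+f)^{-1/2}(1+d)^{-1/p}-1$ at the origin gives $|Y_n-S_n|\le C_p(\log k_n)/\sqrt{k_n}$ on $\mathcal G_n$, where $\xi_i:=\frac{Z_i^2-M_p(2)}{2M_p(2)}-\frac{|Z_i|^p-1}{p}$ is i.i.d.\ with mean $0$ and variance $\sigma^2(p,2)$ and
\[
S_n:=\sqrt{\lambda_n}\,\frac1{\sqrt n}\sum_{i\le n}\xi_i+\frac{\sqrt{k_n}}{2}\big(\bar U_{k_n}-\bar U_n\big).
\]
Bernstein's inequality for sums of sub-exponential variables ($|Z_i|^p$, $g_i^2$, and $Z_i^2$ when $p\ge2$) together with the corresponding tail bound for sums of variables with stretched-exponential tails ($Z_i^2$ when $1\le p<2$) shows that the choice of $\delta_n$ forces $\P(\mathcal G_n^{\mathsf c})\le C_p(\log k_n)/\sqrt{k_n}+\P[W>c_p\,n\log k_n/k_n]$. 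Combining this with the elementary estimate $\dint_{\mathsf{KS}}(X,Z)\le\dint_{\mathsf{KS}}(Y,Z)+\P(|X-Y|>\varepsilon)+\varepsilon\|f_Z\|_\infty$ (valid for $Z$ with bounded density $f_Z$), applied with $Z=G_n\sim\mathcal N(0,v_n)$, $v_n:=\lambda_n\sigma^2(p,2)+\tfrac12(1-\lambda_n)$, whose density is bounded by $(2\pi\min\{\sigma^2(p,2),\tfrac12\})^{-1/2}$, reduces the proof to the bound $\dint_{\mathsf{KS}}(S_n,G_n)\le C_p/\sqrt{k_n}$.

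Splitting $\bar U_{k_n}-\bar U_n$ into the contributions of the first $k_n$ and of the last $n-k_n$ Gaussians gives the exact identity (with the last summand zero if $k_n=n$)
\[
S_n=\sqrt{\lambda_n}\cdot\frac1{\sqrt n}\sum_{i\le n}\xi_i+\frac{1-\lambda_n}{2}\cdot\frac1{\sqrt{k_n}}\sum_{i\le k_n}(g_i^2-1)-\frac{\sqrt{k_n(n-k_n)}}{2n}\cdot\bar\eta,\qquad \bar\eta:=\frac1{\sqrt{n-k_n}}\sum_{k_n<i\le n}(g_i^2-1),
\]
a sum of three mutually independent normalised sums of i.i.d.\ summands with finite third moment; set $G_n:=\sqrt{\lambda_n}\,G^\xi+\tfrac{1-\lambda_n}{2}G^{(1)}-\tfrac{\sqrt{k_n(n-k_n)}}{2n}G^{(2)}$ with $G^\xi\sim\mathcal N(0,\sigma^2(p,2))$, $G^{(1)},G^{(2)}\sim\mathcal N(0,2)$ independent, noting $\Var(G_n)=v_n$. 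Replacing the first two summands of $S_n$ by $\sqrt{\lambda_n}G^\xi$ and $\tfrac{1-\lambda_n}2 G^{(1)}$, the convolution-contraction and scale-invariance of $\dint_{\mathsf{KS}}$ together with the classical Berry--Esseen theorem cost at most $C_p/\sqrt n+C/\sqrt{k_n}\le C_p/\sqrt{k_n}$. For the last summand, a single integration by parts in the convolution identity gives $\dint_{\mathsf{KS}}(A+X,A+Y)\le(\sqrt{2\pi}\,\sigma_A)^{-1}\int_{\mathbb R}|F_X-F_Y| = d_{\mathrm W}(X,Y)/(\sqrt{2\pi}\,\sigma_A)$ for $A\sim\mathcal N(0,\sigma_A^2)$ independent of $X,Y$, where $d_{\mathrm W}$ is the $1$-Wasserstein distance; applying it with $A:=\sqrt{\lambda_n}G^\xi+\tfrac{1-\lambda_n}2G^{(1)}$, together with the $1$-Wasserstein Berry--Esseen bound $d_{\mathrm W}\big(\tfrac{\sqrt{k_n(n-k_n)}}{2n}\bar\eta,\tfrac{\sqrt{k_n(n-k_n)}}{2n}G^{(2)}\big)\le\tfrac{\sqrt{k_n(n-k_n)}}{2n}\cdot\tfrac{C}{\sqrt{n-k_n}}=\tfrac{C\sqrt{k_n}}{2n}$ and the bound $\sigma_A\ge\sqrt{\lambda_n}\,\sigma(p,2)$, this costs at most $\tfrac{C\sqrt{k_n}}{2n}\cdot(\sqrt{2\pi}\sqrt{\lambda_n}\sigma(p,2))^{-1}=C_p/\sqrt n\le C_p/\sqrt{k_n}$, uniformly in $\lambda_n$. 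Hence $\dint_{\mathsf{KS}}(S_n,G_n)\le C_p/\sqrt{k_n}$; and since $v_n,v\ge\min\{\sigma^2(p,2),\tfrac12\}>0$ and $|v_n-v|=|\sigma^2(p,2)-\tfrac12|\,|\lambda_n-\lambda|$, the Brehm--Voigt-type estimate \cite{BV} gives $\dint_{\mathsf{KS}}(G_n,G)\le C_p|\lambda_n-\lambda|$, so chaining the bounds through the triangle inequality proves the theorem.

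The delicate point is the last replacement when $\lambda=1$, equivalently $n-k_n=o(k_n)$: there the factor $(\bar U_{k_n}/\bar U_n)^{1/2}$ approaches its Gaussian limit only at rate $1/\sqrt{n-k_n}$, much slower than $1/\sqrt{k_n}$, so a naive term-by-term Berry--Esseen argument breaks down — this is exactly where \cite{APT2019} was forced to assume $k_n/n^{2/3}\to\infty$. The Wasserstein-smoothing step circumvents this because the offending summand carries the prefactor $\sqrt{k_n(n-k_n)}/n$, and dividing the Wasserstein bound by $\sigma_A\gtrsim\sqrt{\lambda_n}$ turns $\sqrt{k_n}/n$ into $1/\sqrt n$; it is this cancellation that allows the condition $k_n/n^{2/3}\to\infty$ to be dropped.
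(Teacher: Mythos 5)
Your proof is correct, and its overall architecture --- the $p$-Gaussian/Beta representation of $\|P_{E_n}X_n\|_2$, a linearisation whose quadratic remainder is controlled on a high-probability event, the smoothing inequality of Lemma \ref{separating}, and the Brehm--Voigt-type variance comparison for $\dint_{\mathsf{KS}}(G_n,G)$ --- mirrors the paper's. Where you genuinely diverge is in the central step, the bound $\dint_{\mathsf{KS}}(S_n,G_n)\le C_p/\sqrt{k_n}$. The paper writes the linearised statistic as a \emph{single} sum of $2n$ independent, non-identically distributed summands ($n$ rescaled copies of $\xi_i$, then $k_n$ copies of $g_i^2-1$ with one prefactor and $n-k_n$ with another) and applies Theorem \ref{thm:berry esseen} once, in Berry's form with the bound $K_{\mathsf{BE}}\max_i(\rho_i/\sigma_i^2)/\sqrt{\sum_i\sigma_i^2}$; since each summand of the $(n-k_n)$-block has $\rho_i/\sigma_i^2$ of order $\sqrt{k_n}/n$ while the total variance stays bounded below by $J_p$, this one-shot application already yields $C_p/\sqrt{k_n}$ uniformly in $\lambda_n$ --- so, contrary to the suggestion in your closing paragraph, no breakdown occurs at this point and no extra smoothing is needed (the $n/k_n^{3/2}$ term in \cite{APT2019} arose elsewhere). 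Your block-by-block replacement via convolution-contraction together with the Wasserstein-smoothing inequality $\dint_{\mathsf{KS}}(A+X,A+Y)\le d_{\mathrm W}(X,Y)/(\sqrt{2\pi}\,\sigma_A)$ is a valid alternative that makes the cancellation $(\sqrt{k_n}/n)\cdot\lambda_n^{-1/2}=n^{-1/2}$ explicit, at the cost of invoking the $L^1$ Berry--Esseen theorem. A second, smaller difference: you control the good event with Bernstein/stretched-exponential concentration, which uses the (sub-)exponential tails of $|Z_i|^p$, $Z_i^2$ and $g_i^2$, whereas the paper's Lemma \ref{gaussish bound} extracts the same $O(1/\sqrt{k_n})$ tail bounds from the Berry--Esseen theorem plus Gaussian tail estimates, requiring only finite third moments. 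Both routes deliver the stated bound.
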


% % % % % % % % % % % % % % % % % % % % % % % % % % % % % % % % % % %
\subsection{Random projections onto subspaces of random dimension} \label{sec:RP 2 statement}
% % % % % % % % % % % % % % % % % % % % % % % % % % % % % % % % % % %

We will also consider subspace-valued random variables of random dimension. For $n\in\N$, $k\in\{0,\dots,n \}$, and $\lambda\in[0,1]$ consider the probability measure $\mu_{n,\lambda}$ on the set $\mathbb{G}_n := \cup_{k=0}^n \mathbb{G}_{n,k}$ of all subspaces of $\mathbb{R}^n$ defined as
\begin{align*}
\mu_{n,\lambda} ( \mathcal{E}_{n,k} ) = \binom{n}{k} \lambda^k (1- \lambda)^{n-k} \nu_{n,k}( \mathcal{E}_{n,k})\,,
\end{align*}
whenever $\mathcal{E}_{n,k}$ is a measurable subset of $\mathbb{G}_{n,k}$. In particular, if $E_n$ is distributed according to $\mu_{n,\lambda}$, then the dimension of $E_n$ is binomially distributed with parameters $n$ and $\lambda$. 

The measure $\mu_{n,\lambda}$ may be constructed as follows. Let $\{e_i\}_{1 \leq i \leq n}$ be the standard unit vector basis of $\mathbb{R}^n$, and let $I_\lambda$ be the random subset of $\{1,\ldots,n\}$ generated by independently including each element $1 \leq i \leq n $ in $I_\lambda$ with probability $\lambda$, and not including $i$ with probability $1-\lambda$. Now consider the random subspace 
\begin{align*}
E_\lambda := \spn \big\{ e_i : i \in I_\lambda \big\}
\end{align*}
of $\mathbb{R}^n$ with random dimension ${\#}I_\lambda$, where $\#$ denotes the cardinality of a finite set. Finally, let $T$ be a Haar distributed element of the orthogonal group of dimension $n$. Then the image $T E_\lambda$ of $E_\lambda$ under $T$ is distributed according to $\mu_{n,\lambda}$. 

The following theorem concerns the asymptotic Euclidean size of a random vector with law $\mathbb{P}_{n,p,\mathbb{W}}$ projected onto a random subspace $E_n$ of distribution $\mu_{n, \lambda_n}$. The sequence $(\lambda_n\, n)_{n\in\N}$ appearing in the theorem may be interpreted as the average dimension of a subspace with distribution $\mu_{n ,\lambda}$ we project onto.

\begin{thmalpha} \label{thm:RP 2}
Let $1\leq p < \infty$ and $p \neq 2$. Let $\lambda_n$ be a sequence of elements of $(0,1]$ such that $\lambda_n$ converges to some $\lambda \in [0,1]$. Let $X_n$ be a random variable with law $\mathbb{P}_{n,p,\mathbb{W}}$, and let $ E_n$ be a random subspace of $\mathbb{R}^n$ with law $\mu_{n,\lambda_n}$ which is independent of $X_n$. Define the random variable
\begin{align*}
Y_n := \frac{ n^{1/p}  }{ \sqrt{M_p(2)}} || P_{E_n} X_n||_2 - \sqrt{\lambda_n n}.
\end{align*}
Then, as $n \to \infty$, the random variables $Y_n$ converge in distribution to a centered Gaussian random variable $G$ with variance $\lambda \sigma^2(p,2) + \frac{3}{4}( 1- \lambda)$. 
Moreover,  
\begin{align*}
\dint_{\mathsf{KS}}( {Y}_n  , G) \leq C_p  \max \left\{ \frac{ \log(n) }{ \sqrt{ \lambda_n n } } , | \lambda_n - \lambda| , \P \left[ W >  c_p \frac{1}{ \lambda_n } \log(n) \right] \right\}\,,
\end{align*}
where $W$ is a non-negative, independent random variable with distribution $\mathbb{W}$ and $c_p,C_p\in(0,\infty)$ are constants depending only on $p$.
\end{thmalpha}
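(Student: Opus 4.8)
The plan is to reduce $Y_n$ to a simple functional of two \emph{independent} blocks of i.i.d.\ variables, to apply the classical Berry--Esseen theorem to each block, and to reconcile the resulting Gaussian with $G$ by a variance comparison of Brehm--Voigt type. We may assume $\lambda_n n\ge\log^2 n$ (hence $\lambda_n n\to\infty$), since for $\lambda_n n<\log^2 n$ the asserted right-hand side exceeds $1$ and there is nothing to prove.

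First I would set up the key distributional representation. By the $p$-Gaussian representation of $\mathbb{P}_{n,p,\mathbb{W}}$ used in \cite{APT2019, BartheGuedonEtAl}, one may realize $X_n \stackrel{d}{=} (Z_1,\dots,Z_n)\big/\big(\sum_{i=1}^n|Z_i|^p+W\big)^{1/p}$ with $Z_1,\dots,Z_n$ i.i.d.\ $p$-Gaussian and $W$ distributed as $\mathbb{W}$, independent. Writing $E_n=TE_{\lambda_n}$ with $E_{\lambda_n}=\spn\{e_i:i\in I_{\lambda_n}\}$ and $T$ Haar on $\mathcal{O}(n)$ as in Section~\ref{sec:RP 2 statement}, one has $\|P_{E_n}x\|_2^2=\|x\|_2^2\,\big\|P_{E_{\lambda_n}}(T^{-1}x/\|x\|_2)\big\|_2^2$, and since $T^{-1}(x/\|x\|_2)$ is uniform on $\SSS^{n-1}$ and independent of $I_{\lambda_n}$, the polar representation of a uniform vector gives
\begin{align*}
\|P_{E_n}X_n\|_2^2 \;\stackrel{d}{=}\; \|X_n\|_2^2\cdot\frac{\sum_{i=1}^n\varepsilon_i g_i^2}{\sum_{j=1}^n g_j^2},
\end{align*}
where $\varepsilon_1,\dots,\varepsilon_n$ are i.i.d.\ $\mathrm{Bernoulli}(\lambda_n)$, $g_1,\dots,g_n$ are i.i.d.\ standard Gaussian, and $(Z,W)$ is independent of $(\varepsilon,g)$. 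This is exactly the simplification announced in the introduction: the sum of $\#I_{\lambda_n}$ squared Gaussians appears as the genuine i.i.d.\ sum $\sum_i\varepsilon_i g_i^2$. Setting $A_n:=\overline{Z^2}\big/\big(M_p(2)(\overline{|Z|^p}+W/n)^{2/p}\big)$ ($\sigma(Z,W)$-measurable, $A_n\to1$ in probability) and $B_n:=\sum_i\varepsilon_i g_i^2/\sum_j g_j^2$ ($\sigma(\varepsilon,g)$-measurable, $\mathbb{E}B_n=\lambda_n$, $\mathrm{Var}\,B_n=\tfrac{3}{n}\lambda_n(1-\lambda_n)(1+O(1/n))$), where $\overline{Z^2}=\tfrac1n\sum_i Z_i^2$ and $\overline{|Z|^p}=\tfrac1n\sum_i|Z_i|^p$, we obtain the exact identity $Y_n\stackrel{d}{=}\sqrt{nA_nB_n}-\sqrt{\lambda_n n}$ with $A_n$ independent of $B_n$.

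Next I would truncate $W$ and linearize. On the event $\{W\le c_p\lambda_n^{-1}\log n\}$, whose complement accounts for the term $\P[W>c_p\lambda_n^{-1}\log n]$, and on a further event of probability $\ge1-e^{-c\lambda_n n}$ on which $\overline{Z^2}$, $\overline{|Z|^p}$, $\tfrac1n\sum g_j^2$ and $\tfrac1n\sum\varepsilon_i g_i^2$ all lie within a fixed factor of their expectations, first-order Taylor expansions of $x\mapsto\sqrt x$ about $\lambda_n$ and of $A_n$, $B_n$ about their central values give
\begin{align*}
Y_n \;=\; U_n+V_n+R_n,\qquad U_n:=\sqrt{\lambda_n}\,\frac1{\sqrt n}\sum_{i=1}^n V_i,\quad V_n:=\frac1{2\sqrt{\lambda_n n}}\sum_{i=1}^n(\varepsilon_i-\lambda_n)g_i^2,
\end{align*}
where $V_i:=\tfrac{Z_i^2-M_p(2)}{2M_p(2)}-\tfrac{|Z_i|^p-1}{p}$ are i.i.d., centered, with variance $\sigma^2(p,2)$ and finite absolute third moment (all moments of the $p$-Gaussian being finite), $U_n$ is independent of $V_n$, and $R_n$ collects (i) the contribution of $W$ to the linearization of $A_n$, a shift of size $O(\log n/\sqrt{\lambda_n n})$ on the truncation event, and (ii) genuinely second-order terms --- products such as $(A_n-1)(B_n-\lambda_n)$ and squares of centered sample means multiplied by $\sqrt{\lambda_n n}$ --- each of which is $O(\mathrm{polylog}(n)/\sqrt{\lambda_n n})$ off the above exceptional events, by Bernstein estimates for sums of sub-exponential variables. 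The step I expect to require the most care is turning this in-probability control of $R_n$ into a Kolmogorov--Smirnov bound of the same order: one pairs the deviation estimates for $R_n$ with an anti-concentration bound for $U_n+V_n$, which reduces (by conditioning on one block) to anti-concentration of whichever of $U_n$, $V_n$ stays non-degenerate --- and at least one always does, since $\lambda=0$ and $\lambda=1$ cannot both occur.

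Finally I would run the Berry--Esseen and variance-comparison step. Since multiplication by the positive scalar $\sqrt{\lambda_n}$ leaves $\dint_{\mathsf{KS}}$ unchanged, the classical Berry--Esseen theorem gives $\dint_{\mathsf{KS}}(U_n,\mathcal{N}(0,\lambda_n\sigma^2(p,2)))\le C_p/\sqrt n$, and likewise $\dint_{\mathsf{KS}}(V_n,\mathcal{N}(0,\tfrac34(1-\lambda_n)))\le C/\sqrt{\lambda_n(1-\lambda_n)n}$. Using that $\dint_{\mathsf{KS}}(S+S',T+T')\le\dint_{\mathsf{KS}}(S,T)+\dint_{\mathsf{KS}}(S',T')$ when $S\perp S'$ and $T\perp T'$ (condition on one summand at a time), together with $\mathcal{N}(0,\lambda_n\sigma^2(p,2))*\mathcal{N}(0,\tfrac34(1-\lambda_n))=\mathcal{N}(0,\tau_n^2)$ for $\tau_n^2:=\lambda_n\sigma^2(p,2)+\tfrac34(1-\lambda_n)$, we obtain $\dint_{\mathsf{KS}}(U_n+V_n,\mathcal{N}(0,\tau_n^2))\le C_p/\sqrt n+C/\sqrt{\lambda_n(1-\lambda_n)n}$. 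Both $\tau_n^2$ and $\tau^2:=\lambda\sigma^2(p,2)+\tfrac34(1-\lambda)$ are convex combinations of $\sigma^2(p,2)>0$ (here one uses $p\neq2$) and $\tfrac34$, hence bounded below by a positive constant depending only on $p$, while $|\tau_n^2-\tau^2|\le C_p|\lambda_n-\lambda|$; the Brehm--Voigt-type bound on the Kolmogorov--Smirnov distance between centered Gaussians in terms of their variances then yields $\dint_{\mathsf{KS}}(\mathcal{N}(0,\tau_n^2),G)\le C_p|\lambda_n-\lambda|$, and assembling the estimates proves the theorem, with convergence in distribution an immediate consequence. The one genuinely delicate point is that the term $C/\sqrt{\lambda_n(1-\lambda_n)n}$ degenerates as $\lambda_n\to1$ (it is harmless, $\le C\log n/\sqrt{\lambda_n n}$, when $\lambda<1$); there one compares $U_n+V_n$ with $U_n$ directly by a second-order expansion, using that $V_n$ is exactly centered with $\mathrm{Var}\,V_n=\tfrac34(1-\lambda_n)(1+O(1/n))$ and that the Gaussian approximant of $U_n$ (variance $\lambda_n\sigma^2(p,2)$, bounded below since here $\lambda=1$) has distribution function $C^2$ with bounded second derivative, producing an error of order $1-\lambda_n=|\lambda_n-\lambda|$ rather than $\sqrt{1-\lambda_n}$; the opposite endpoint $\lambda=0$ is easier, $U_n$ then being the negligible block.
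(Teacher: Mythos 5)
Your setup, linearization, and endgame (the $p$-Gaussian/Bernoulli representation, the Taylor expansion isolating a linear term plus a quadratically small remainder, and the Brehm--Voigt-type variance comparison) run parallel to the paper's argument, and your observation that the linear term splits exactly as $\xi_n=U_n+V_n$ with $U_n$ ($Z$-measurable) independent of $V_n$ ($(\varepsilon,g)$-measurable) is correct and rather elegant. The genuine gap is in how you then apply Berry--Esseen. The paper deliberately does \emph{not} split the blocks: it writes $\xi_n=\frac{1}{\sqrt n}\sum_{i=1}^n\varphi_i^{(\lambda_n)}$, merging for each index $i$ the $Z_i$-contribution and the $(\varepsilon_i,g_i)$-contribution into a single i.i.d.\ summand whose variance $w(\lambda_n)=\lambda_n\sigma^2(p,2)+\frac34(1-\lambda_n)$ is bounded below by $\min\{\sigma^2(p,2),3/4\}$ \emph{uniformly in} $\lambda_n$, while only the third absolute moment degrades (like $1/\sqrt{\lambda_n}$, which is exactly absorbed into the target rate $1/\sqrt{\lambda_n n}$). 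Your two-block version loses this: Berry--Esseen applied to $V_n$ alone gives $C/\sqrt{\lambda_n(1-\lambda_n)n}$, and the extra factor $(1-\lambda_n)^{-1/2}$ is not recovered by either of your patches.

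Concretely, your dichotomy is on the limit $\lambda$, but the obstruction lives in an intermediate regime of $(\lambda_n,n)$. The claim that $C/\sqrt{\lambda_n(1-\lambda_n)n}\le C\log n/\sqrt{\lambda_n n}$ requires $1-\lambda_n\ge1/\log^2 n$, which $\lambda<1$ only guarantees for $n\ge n_0(\lambda)$; below that threshold the constant you get depends on $\lambda$, not only on $p$. And the second-order comparison of $U_n+V_n$ with $U_n$'s Gaussian approximant produces an error of order $\E[V_n^2]\asymp1-\lambda_n$, which equals $|\lambda_n-\lambda|$ only when $\lambda=1$. Take, say, $\lambda_n\equiv\lambda=1-\delta$ with $\delta$ small and fixed and $\mathbb{W}=\delta_0$: the theorem asserts $\dint_{\mathsf{KS}}(Y_n,G)\le C_p\log n/\sqrt n$ with $C_p$ independent of $\delta$, but for $n$ of order $e^{1/\sqrt\delta}$ your first route gives $\asymp\delta^{-1/2}n^{-1/2}\gg\log n/\sqrt n$ and your second gives $\asymp\delta\gg\log n/\sqrt n$. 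So as written the argument only yields constants depending on $(p,\lambda)$ (enough for the distributional convergence, not for the stated Berry--Esseen bound). The fix is either the paper's merged sum, or, if you want to keep the two independent blocks, to replace the Kolmogorov--Smirnov Berry--Esseen bound for $V_n$ by a Wasserstein-type bound combined with a convolution/smoothing step against the non-degenerate Gaussian component coming from the other block; plain $\dint_{\mathsf{KS}}$-additivity over independent summands is too lossy here.
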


We conclude this subsection with two interesting observations that arise from a comparison of Theorem~\ref{thm:RP 1} and Theorem~\ref{thm:RP 2}.

\begin{rmk}
Comparing the variances that occur in Theorem~\ref{thm:RP 1} and Theorem~\ref{thm:RP 2}, we notice that the one in the latter is larger. This however seems natural given the increased randomness in that setting. 

Moreover, we find it particularly interesting and surprising that the additional randomness gives rise to a term $-\frac{3}{4}\lambda$ in the variance, because exactly such an additional term also arises in the comparison of a central limit theorem for random projections and deterministic projections in \cite{KPT2019_II} (see discussion after Theorem E in that paper).  
\end{rmk}

\begin{rmk}
Theorem~\ref{thm:RP 1} together with Theorem~\ref{thm:RP 2} leads us to the observation that we have the same central limit theorem regardless of whether we project onto uniform random subspaces of dimensions $k_n$ or onto random subspaces of random dimension, provided their dimensions are sufficiently large, i.e., $\lambda_n\to 1$ and $k_n/n\to 1$ as $n\to\infty$. 
On the other hand, if $\lambda_n$ coincides with the sequence $k_n/n$ in Theorem \ref{thm:RP 1} and $\lambda_n\to\lambda\in [0,1)$, then we still have central limit theorems with the same centering, but this time with different limiting variances.
\end{rmk}

% % % % % % % % % % % % % % % % % % % % % % % % % % % % % % % % % %
\subsection{The $q$-norm of random elements of $\mathbb{B}_p^n$}
% % % % % % % % % % % % % % % % % % % % % % % % % % % % % % % % % %

Finally, we study the asymptotics of the $\ell_q$-norm of a random element of $\mathbb{B}_p^n$ as the dimension $n$ tends to infinity. To this end, let us first recall an abbreviated version of the main result in \cite[Theorem 1.1 (a), $d=1$]{KPT2019}. 
%\begin{thm}[Theorem 1.1 (a), d=1 \cite{KPT2019}] \label{thm:KPT}
Assume that $1 \leq p \neq q < \infty$ and let $X_n$ be a random vector uniformly distributed on $\mathbb{B}_p^n$, and define the random variable
\begin{align} \label{KPT Y}
Y_n := \sqrt{n} \left( \frac{n^{1/q - 1/p} }{ M_p(q)^{1/q}   } ||X_n||_q - 1 \right).
\end{align}
Then, as $n \to \infty$, $Y_n$ converges in distribution to $G$, a centered Gaussian random variable with variance $\sigma^2(p,q)$ as given in \eqref{variance}.
%\end{thm}
(The statement we give here is abbreviated in the sense that the authors of \cite{KPT2019} work in a multivariate setting, considering the simultaneous convergence for different values of $q$.)
Our final result, Theorem \ref{thm:q norm}, establishes an upper bound on the rate at which $Y_n$ converges to $G$  in terms of the Kolmogorov-Smirnov distance. In fact, we opt to work in a slightly more general setting, letting the random vector $X_n$ have distribution $\mathbb{P}_{n,p,\mathbb{W}}$ for a general choice of $\mathbb{W}$ as was considered recently in \cite{KPT2019_II}. 

\begin{thmalpha} \label{thm:q norm}
Let $1 \leq p \neq q < \infty$. Let $X_n$ be a random vector with distribution $\mathbb{P}_{n,p,\mathbb{W}}$, $Y_n$ be defined as in \eqref{KPT Y}, and let $G$ be a centered Gaussian random variable with variance $\sigma^2(p,q)$. Then,
\begin{align} \label{1 BE}
\dint_{\mathsf{KS}}(Y_n , G) \leq \frac{ C_{p,q} \log n}{ \sqrt{n}} +  \P \left[ W > c_{p,q} \sqrt{ n \log n } \right],
\end{align}
where $W$ is a non-negative, independent random variable with distribution $\mathbb{W}$ and $c_{p,q},C_{p,q}\in(0,\infty)$ depend only on $p$ and $q$.
In particular, the sequence $Y_n$ converges in distribution to $G$.
\end{thmalpha}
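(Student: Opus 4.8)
The strategy is to pass to the Schechtman--Zinn type probabilistic representation of $\mathbb{P}_{n,p,\mathbb W}$-distributed vectors, Taylor-expand so as to isolate a normalised sum of i.i.d.\ random variables, apply the classical Berry--Esseen theorem to that sum, and control the remaining errors --- the only genuinely $\mathbb W$-sensitive point --- on a high-probability event. Recall (see \cite{APT2019,BartheGuedonEtAl}) that if $Z_1,\dots,Z_n$ are i.i.d.\ $p$-Gaussian random variables and $W\sim\mathbb W$ is independent of them, then $X_n\stackrel{d}{=}(Z_1,\dots,Z_n)\big/\bigl(\sum_{i=1}^n|Z_i|^p+W\bigr)^{1/p}$ (I suppress a harmless $p$-dependent rescaling of $W$). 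Writing $A_n:=\tfrac1n\sum_{i=1}^n|Z_i|^q$ and $B_n:=\tfrac1n\sum_{i=1}^n|Z_i|^p$, the representation turns $Y_n$ into
\[
Y_n=\sqrt n\,\bigl(g(A_n,B_n+W/n)-1\bigr),\qquad g(a,b):=\frac{a^{1/q}}{M_p(q)^{1/q}\,b^{1/p}},
\]
and by Lemma~\ref{p Gaussian moments} one has $\E|Z_i|^q=M_p(q)$ and $\E|Z_i|^p=M_p(p)=1$, so that $g$ is being evaluated near the point $(M_p(q),1)$, where $g=1$, $\partial_a g=1/(qM_p(q))$ and $\partial_b g=-1/p$.

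A second-order Taylor expansion of $g$ about $(M_p(q),1)$, multiplied by $\sqrt n$, produces the decomposition
\[
Y_n=S_n-\frac{W}{p\sqrt n}+R_n,\qquad S_n:=\frac1{\sqrt n}\sum_{i=1}^n\xi_i,\quad \xi_i:=\frac{|Z_i|^q-M_p(q)}{qM_p(q)}-\frac{|Z_i|^p-1}{p},
\]
where $R_n$ is $\sqrt n$ times the quadratic remainder, hence (on a fixed neighbourhood of $(M_p(q),1)$) bounded by a constant times $\sqrt n\bigl((A_n-M_p(q))^2+(B_n-1)^2+(W/n)^2\bigr)$. The $\xi_i$ are i.i.d.\ and centred with $\var(\xi_1)=\sigma^2(p,q)$ by the very definition \eqref{variance}, and since the $p$-Gaussian has tails of order $e^{-|t|^p/p}$ all moments of $\xi_1$ are finite; in particular $\E|\xi_1|^3<\infty$, so the classical Berry--Esseen theorem gives $\dint_{\mathsf{KS}}(S_n,G)\le C_{p,q}/\sqrt n$, where $G$ is centred Gaussian with variance $\sigma^2(p,q)$.

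It remains to transfer this to $Y_n$. Let $\Phi_\sigma$ be the distribution function of $G$ and set $\varepsilon_n:=C_{p,q}\log n/\sqrt n$. Using the inclusions $\{Y_n\le t\}\cap\{|R_n|\le\varepsilon_n\}\subseteq\{S_n-W/(p\sqrt n)\le t+\varepsilon_n\}$ and its symmetric counterpart, together with $\|\Phi_\sigma'\|_\infty\le(\sigma\sqrt{2\pi})^{-1}$, one obtains
\[
\dint_{\mathsf{KS}}(Y_n,G)\le\frac{\varepsilon_n}{\sigma\sqrt{2\pi}}+\dint_{\mathsf{KS}}\!\Bigl(S_n-\tfrac{W}{p\sqrt n},\,G\Bigr)+\P\bigl[|R_n|>\varepsilon_n\bigr].
\]
For the middle term, since $W$ is independent of $S_n$ and of $G$, convolving both with $-W/(p\sqrt n)$ does not increase the Kolmogorov--Smirnov distance, so it is at most $\dint_{\mathsf{KS}}(S_n,G)+\dint_{\mathsf{KS}}(G-\tfrac{W}{p\sqrt n},G)$, and the last summand equals $\sup_t\E_W[\Phi_\sigma(t+\tfrac{W}{p\sqrt n})-\Phi_\sigma(t)]\le\E_W[\min(1,\tfrac{W}{p\sigma\sqrt{2\pi n}})]=O(\log n/\sqrt n)$ (indeed $O(1/\sqrt n)$ as soon as $\mathbb W$ has finite mean, which covers $\mathbb W=\delta_0$, exponential and gamma laws and all cases of interest) --- so the linear $W$-term costs no tail contribution at all. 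For the last term, $\P[|R_n|>\varepsilon_n]$ is dominated by $\P[|A_n-M_p(q)|>c_{p,q}(\log n/n)^{1/2}]+\P[|B_n-1|>c_{p,q}(\log n/n)^{1/2}]+\P[W>c_{p,q}\sqrt{n\log n}]$ up to exponentially small terms, and the first two are $\le C_{p,q}/\sqrt n$ by a Bernstein / moderate-deviation inequality for the sums of the independent, centred, sub-exponential (when $q\le p$), respectively sub-Weibull, summands $|Z_i|^q-M_p(q)$ and $|Z_i|^p-1$, provided $c_{p,q}$ is large enough; the truncation level $\sqrt{n\log n}$ is exactly the one at which $\sqrt n\,(W/n)^2\lesssim\varepsilon_n$. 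Collecting the three contributions gives $\dint_{\mathsf{KS}}(Y_n,G)\le C_{p,q}\log n/\sqrt n+\P[W>c_{p,q}\sqrt{n\log n}]$, and since this bound tends to $0$ we also recover $Y_n\Rightarrow G$.

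The step I expect to be the main obstacle is the error bookkeeping of the last paragraph, in particular the clean separation of the two ways $W$ enters: the first-order shift $-W/(p\sqrt n)$ must be recognised as something merely averaged against the smooth Gaussian density (producing an $O(\log n/\sqrt n)$ term with \emph{no} tail probability), while the quadratic remainder $\sqrt n\,(W/n)^2$ is precisely what forces the truncation of $W$ at level $\sqrt{n\log n}$. One also needs sufficiently sharp moderate-deviation estimates for the empirical $p$-th and $q$-th absolute moments of a $p$-Gaussian vector, the case $q>p$ being the most delicate because there $|Z_i|^q$ is only sub-Weibull rather than sub-exponential. Everything else --- the representation, the identity $\var(\xi_1)=\sigma^2(p,q)$, and the finiteness of all moments of $\xi_1$ --- is routine.
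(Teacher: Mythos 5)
Your architecture is the same as the paper's: the representation of Lemma~\ref{measure rep}, a second--order Taylor expansion isolating the normalised i.i.d.\ sum $\xi_n=n^{-1/2}\sum_i\bigl(\tfrac{|Z_i|^q-M_p(q)}{qM_p(q)}-\tfrac{|Z_i|^p-1}{p}\bigr)$ with variance $\sigma^2(p,q)$, the classical Berry--Esseen bound $\dint_{\mathsf{KS}}(\xi_n,G)\le C_{p,q}/\sqrt n$, and control of the quadratic remainder at level $\epsilon_n\asymp\log n/\sqrt n$ (your observation that the truncation $W\le c\sqrt{n\log n}$ is exactly what makes $\sqrt n\,(W/n)^2\lesssim\epsilon_n$ matches the paper). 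One harmless difference: for the moderate--deviation bounds $\P[|\xi_n^j|>c\sqrt{\log n}]\le C/\sqrt n$ you invoke Bernstein/sub--Weibull concentration, whereas the paper bootstraps them from the Berry--Esseen theorem itself plus a Gaussian tail estimate (Lemma~\ref{gaussish bound}), which needs only third moments; both routes work.

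The genuine divergence, and the gap, is your treatment of the linear term $-W/(p\sqrt n)$. You convolve with the Gaussian and assert $\sup_t\E_W\bigl[\Phi_\sigma(t+\tfrac{W}{p\sqrt n})-\Phi_\sigma(t)\bigr]\le\E_W\bigl[\min\bigl(1,\tfrac{W}{p\sigma\sqrt{2\pi n}}\bigr)\bigr]=O(\log n/\sqrt n)$, ``with no tail contribution at all''. For a general Borel probability measure $\mathbb W$ on $[0,\infty)$ --- which is all the theorem assumes --- this is false: the truncation dichotomy gives only $\E[\min(1,cW/\sqrt n)]\le\P[W>u]+cu/\sqrt n$, so forcing the drift term below $\log n/\sqrt n$ requires $u\asymp\log n$ and leaves a residual tail $\P[W>c\log n]$, which is \emph{not} dominated by $\P[W>c\sqrt{n\log n}]$; and for heavy tails (e.g.\ $\P[W>t]\asymp1/\log t$) one has $\E[\min(1,W/(c\sqrt n))]\ge\P[W>c\sqrt n]\asymp1/\log n\gg\log n/\sqrt n$. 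Your own parenthetical (``as soon as $\mathbb W$ has finite mean'') concedes the point: under that extra hypothesis your argument is correct and even gives $O(1/\sqrt n)$ here, but it does not prove \eqref{1 BE} in the stated generality. I would add, in fairness, that you have put your finger on the one genuinely delicate step: the paper feeds $-W/(p\sqrt n)$ into Lemma~\ref{separating} as an error variable, which should produce $\P[W/(p\sqrt n)>\epsilon_n/2]=\P[W\gtrsim\log n]$, yet \eqref{full eq} records this as $\P[W/n>\epsilon_n/2]$; so the discrepancy between a $W$--tail at level $\log n$ and one at level $\sqrt{n\log n}$ is present in the paper's argument as well, and is not an artifact of your method. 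Nonetheless, as submitted, your proof of \eqref{1 BE} for arbitrary $\mathbb W$ has a hole at this step.
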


Since the choice $\mathbb{W}(\dint s) = p^{-1}e^{-s/p}\,\dint s$ results in $\mathbb{P}_{n,p,\mathbb{W}}$ being the uniform distribution on the unit ball $\mathbb{B}_p^n$, Theorem \ref{thm:q norm} implies the central limit theorem of Kabluchko, Prochno, and Th\"ale in \cite{KPT2019} and in \cite{KPT2019_II} for $W_n\equiv W$ there. Here we refrain from working in this slightly more general setting where $W$ may depend on $n$.

% % % % % % % % % % % % %
\subsection{Overview}
% % % % % % % % % % % % %

The remainder of the paper is structured as follows. In Section \ref{sec:PrelimNotation}, we introduce our notation, as well as some preliminary facts about the geometry of $\ell_p^n$-balls -- including several probabilistic representations for random vectors that will be integral to our approach. We also initiate here our study of Kolmogorov-Smirnov distances, developing some basic facts like a triangle inequality as well as a Lipschitz continuity result for the variances of different Gaussians. These general facts will be used in the remainder of the article in Section \ref{sec:RP 1 proof}, Section \ref{sec:RP 2 proof} and Section \ref{sec:q norm proof}, which are dedicated to proving Theorem \ref{thm:RP 1}, Theorem \ref{thm:RP 2}, and Theorem \ref{thm:q norm} respectively.

% % % % % % % % % % % % % % % % % % % % % % % % % % % %
\section{Preliminaries and notation}\label{sec:PrelimNotation}
% % % % % % % % % % % % % % % % % % % % % % % % % % % %

In this section we shall briefly present the necessary background material together with the notation we use throughout the text.

% % % % % % % % % % % %
\subsection{Notation}
% % % % % % % % % % % %
In this article we work with $\mathbb{R}^n$ equipped with the standard Euclidean structure. Given a subset $A$ of $\mathbb{R}^n$ and a subset $I$ of the real line, we define
\begin{align} \label{mink mult}
IA := \big\{ r a \in\R^n \,:\, r \in I, a \in A \big\}.
\end{align}
For $k\in\{1,\dots,n\}$, we will denote by $\mathbb{G}_{n,k}$ the Grassmannian of $k$-dimensional subspaces of $\mathbb{R}^n$. For $k=0$, $\mathbb{G}_{n,0}$ is simply the set containing the trivial vector space $\{0\}$, where $0$ is the origin. It is possible to endow $\mathbb{G}_{n,k}$ with a unique rotationally invariant probability measure $\nu_{n,k}$, the Haar probability measure on $\mathbb{G}_{n,k}$. For a subspace $E$ of $\mathbb{R}^n$, we denote by $P_E$ the orthogonal projection of $\mathbb{R}^n$ onto $E$ (with respect to the standard Euclidean structure).

For $1\leq p<\infty$, we define the $\ell_p$-norm of an element $x = (x_1,\ldots,x_n)$ of $\mathbb{R}^n$ by
\begin{align*}
||x||_p := \left( \sum_{i=1}^n |x_i|^p \right)^{1/p}\,.
\end{align*}
For $p \geq 1$, whenever $\mathbb{R}^n$ is endowed with the norm $|| \cdot ||_p$ it becomes a Banach space which we shall denote $\ell_p^n$. The unit ball of $\ell_p^n$ is given by $\mathbb{B}_p^n := \{ x \in \mathbb{R}^n : ||x||_p \leq 1 \}$. We shall write $\SSS_p^{n-1}:= \{ x \in \mathbb{R}^n : ||x||_p = 1 \}$ for the unit sphere in $\ell_p^n$. 
%Given a real-valued random variable $X$ with probability law $\P$, we say that $X$ has a density $f$ if there exists a non-negative measurable function $f$ such that 
%\begin{align*}
%\P( X \in A) := \int_A f(s)ds ,
%\end{align*}
%for Borel subsets $A$ of the real line.

Given a probability space $(\Omega,\mathcal A, \P)$ and a probability measure $\mu$ on a measurable space $(E,\mathscr E)$, we shall indicate by $X\sim \mu$ that the random object $X:\Omega\to E$ has distribution $\mu$. Given any pair of random variables $X$ and $Y$, we denote by
\[X\stackrel{\dint}{=} Y\] 
their equality in distribution.

Finally, we adopt the following convention for constants. We will write $c_p, C_p$ and $c_{p,q}, C_{p,q}$ to denote positive constants depending only on $p$ and on $p,q$ respectively. Within subsections we will distinguish between constants by writing $C_p, C'_p, C_p''$ etc, but two instances of $C_p$ or $C_p'$ occurring in different sections will denote different constants. Constants $c,C\in(0,\infty)$ etc.~without any subindex are absolute constants.

% % % % % % % % % % % % % % % % % % % % % % %
\subsection{The $p$-Gaussian distribution} \label{sec:geom}
% % % % % % % % % % % % % % % % % % % % % % %
We say a real-valued random variable has the $p$-Gaussian distribution if its density with respect to the Lebesgue measure on $\R$ is given by
\begin{align*}
\phi_p(s) := \frac{ e^{ - |s|^p/p} }{ 2p^{1/p} \Gamma(1 + 1/p) }\dint s
\end{align*}
In the case $p=2$, we say that the random variable has the standard Gaussian distribution.

The following Lemma, lifted from \cite{APT2019} (see also \cite[Lemma 4.1]{KPT2019}), gives the absolute moments of $p$-Gaussian random variables in terms of the gamma function.
\begin{lemma}[Lemma 3.1 \cite{APT2019}] \label{p Gaussian moments}
Let $1\leq p \leq \infty$ and $r\geq 0$. Let $Z$ be $p$-Gaussian distributed. Then
\begin{align*}
 \E\big[ |Z|^r \big] = M_p(r) := \frac{ p^{r/p} }{ r+1} \frac{ \Gamma(1+\frac{r+1}{p} )}{ \Gamma(1+\frac{1}{p} ) }
\end{align*}
and for $q\geq 0$,
\[
\Cov\big[|Z|^q,|Z|^r\big] = M_p(q+r) - M_p(q)M_p(r)\,.
\]
\end{lemma}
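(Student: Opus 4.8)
The final statement is Lemma \ref{p Gaussian moments}, which computes the absolute moments $\E[|Z|^r]$ of a $p$-Gaussian random variable $Z$ and the covariance $\Cov[|Z|^q,|Z|^r]$.

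\medskip

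\textbf{Proof proposal.} The plan is to compute $\E[|Z|^r]$ directly from the definition of the $p$-Gaussian density and then deduce the covariance formula as an immediate corollary. First I would write
\begin{align*}
\E\big[|Z|^r\big] = \int_{\R} |s|^r \, \phi_p(s)\dint s = \frac{2}{2p^{1/p}\Gamma(1+1/p)} \int_0^\infty s^r e^{-s^p/p}\dint s,
\end{align*}
using the symmetry of the density to reduce to the half-line. Then I would perform the substitution $t = s^p/p$, i.e. $s = (pt)^{1/p}$ and $\dint s = p^{1/p} \frac{1}{p} t^{1/p - 1}\dint t$, which turns the integral into
\begin{align*}
\int_0^\infty s^r e^{-s^p/p}\dint s = p^{(r+1)/p}\,\frac{1}{p}\int_0^\infty t^{(r+1)/p - 1} e^{-t}\dint t = \frac{p^{(r+1)/p}}{p}\,\Gamma\!\left(\tfrac{r+1}{p}\right).
\end{align*}
Substituting back and simplifying the prefactor, using $\Gamma(1+x) = x\,\Gamma(x)$ on both $\Gamma\big(1+\tfrac{r+1}{p}\big) = \tfrac{r+1}{p}\Gamma\big(\tfrac{r+1}{p}\big)$ and $\Gamma(1+\tfrac1p) = \tfrac1p\Gamma(\tfrac1p)$, should yield exactly $M_p(r) = \frac{p^{r/p}}{r+1}\frac{\Gamma(1+\frac{r+1}{p})}{\Gamma(1+\frac1p)}$ after the $p^{1/p}$ and $1/p$ factors cancel appropriately.

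\medskip

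For the covariance statement, I would simply observe that $|Z|^q \cdot |Z|^r = |Z|^{q+r}$, so $\E[|Z|^q |Z|^r] = \E[|Z|^{q+r}] = M_p(q+r)$ by the first part, and therefore
\begin{align*}
\Cov\big[|Z|^q, |Z|^r\big] = \E\big[|Z|^{q+r}\big] - \E\big[|Z|^q\big]\E\big[|Z|^r\big] = M_p(q+r) - M_p(q)M_p(r).
\end{align*}

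\medskip

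I do not anticipate any real obstacle here — the result is a routine Gamma-function computation, and the main point of care is bookkeeping the constants in the substitution so that the normalization of $\phi_p$ cancels cleanly against the factors of $p^{1/p}$ and $1/p$ produced by the change of variables. The case $p = \infty$ (where $\phi_\infty$ is the uniform density on $[-1,1]$ and $M_\infty(r) = 1/(r+1)$) can be treated either as a limiting case or by a direct one-line computation. In any event, since the statement is quoted from \cite{APT2019}, citing that reference together with the sketch above suffices.
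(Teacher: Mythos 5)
Your computation is correct and is the standard Gamma-function argument; the paper itself gives no proof of this lemma but simply cites \cite{APT2019}, so your substitution $t=s^p/p$ together with the functional equation $\Gamma(1+x)=x\Gamma(x)$ supplies exactly the routine verification the citation stands in for, and the covariance identity is indeed immediate from $|Z|^q|Z|^r=|Z|^{q+r}$.
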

In light of Lemma \ref{p Gaussian moments}, it is straightforward to prove that with $\sigma^2(p,q)$ as in \eqref{variance}, if $Z$ is $p$-Gaussian distributed, then
\begin{align*}
\Var\left[ \frac{ |Z|^q - M_p(q)}{ q M_p(q)} - \frac{ |Z|^p - 1}{ p }  \right] = \mathbb{E} \left[ \left( \frac{ |Z|^q - M_p(q)}{ q M_p(q)} - \frac{ |Z|^p - 1}{ p } \right)^2 \right] = \sigma^2(p,q).
\end{align*}

\subsection{Probabilistic representations for measures on $\mathbb{B}_p^n$}
We now consider probability measures on the unit ball $\mathbb{B}_p^n$ and its boundary $\mathbb{S}_p^{n-1}$. Let $\mathbb{U}_{n,p}$ be the uniform measure on $\mathbb{B}_p^n$. We define  the cone measure $\mathbb{C}_{n,p}$ on the boundary $\mathbb{S}_p^{n-1}$ to be the unique probability measure with the property that
\begin{align*}
\mathbb{C}_{n,p}(A) := \mathbb{U}_{n,p} \left( [0,1] A \right),
\end{align*}
for measurable subsets $A$ of $\mathbb{S}_p^{n-1}$, where $[0,1]A$ is defined as in \eqref{mink mult}. Recall that we defined the class of probability measures $\mathbb{P}_{n,p,\mathbb{W}}$ on the unit ball given by
\begin{align*}
\mathbb{P}_{n,p,\mathbb{W}} := \mathbb{W} \left( \{ 0 \} \right) \mathbb{C}_{n,p} + H \mathbb{U}_{n,p},
\end{align*}
where $H$ is a $p$-radial function given by \eqref{h def}.

The first key result of this section is a lemma from \cite{BartheGuedonEtAl}, stating that a random vector distributed according to $\mathbb{P}_{n,p,\mathbb{W}}$ may be represented in terms of independent $p$-Gaussian random variables. 

\begin{lemma}[\cite{BartheGuedonEtAl}, Theorem 3] \label{measure rep}
Let $Z = (Z_1,\ldots,Z_n)$ be a random vector whose entries are independent and identically distributed $p$-Gaussians, and let $W$ be a non-negative real-valued random variable with law $\mathbb{W}$. Then the $n$-dimensional random vector
\begin{align*}
\frac{Z}{ \left( ||Z||_p^p + W \right)^{1/p} } 
\end{align*}
is distributed according to $\mathbb{P}_{n,p,\mathbb{W}}$
\end{lemma}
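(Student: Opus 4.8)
The plan is to unwind the statement into a change of variables in $\R^n$ written in $\ell_p$-polar coordinates; the only genuinely geometric input is the classical Schechtman--Zinn representation of the cone measure, which I would cite rather than reprove. That input says: if $Z=(Z_1,\dots,Z_n)$ has i.i.d.\ $p$-Gaussian coordinates, then $\Theta:=Z/\|Z\|_p$ is distributed according to $\mathbb{C}_{n,p}$ and is independent of the radial part $\|Z\|_p$, while $\|Z\|_p^p=\sum_{i=1}^n|Z_i|^p$ is a Gamma variable of shape $n/p$ (with a scale fixed by the normalization of $\phi_p$). Equivalently, for every non-negative measurable $f$ on $\R^n$,
\begin{align*}
\int_{\R^n} f(z)\,\dint z \;=\; n\,\vol(\mathbb{B}_p^n)\int_0^\infty r^{n-1}\!\int_{\SSS_p^{n-1}} f(r\theta)\,\mathbb{C}_{n,p}(\dint\theta)\,\dint r ,
\end{align*}
and restricting this to $\mathbb{B}_p^n$ gives the polar disintegration $\int_{\mathbb{B}_p^n} g\,\dint\mathbb{U}_{n,p}=n\int_0^1 r^{n-1}\!\int_{\SSS_p^{n-1}}g(r\theta)\,\mathbb{C}_{n,p}(\dint\theta)\,\dint r$ of the uniform measure.

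Now I would condition on the independent variable $W$. On the event $\{W=0\}$ one has $X:=Z/(\|Z\|_p^p+W)^{1/p}=\Theta\sim\mathbb{C}_{n,p}$, which, weighted by $\P[W=0]=\mathbb{W}(\{0\})$, produces the first summand $\mathbb{W}(\{0\})\,\mathbb{C}_{n,p}$ of \eqref{P def}. On the event $\{W=w\}$ with $w>0$, the map $z\mapsto x=z/(\|z\|_p^p+w)^{1/p}$ is a bijection from $\R^n\setminus\{0\}$ onto the interior of $\mathbb{B}_p^n$; in polar coordinates it fixes the direction and sends the radius $r=\|z\|_p$ to $s=r(r^p+w)^{-1/p}\in[0,1)$, with inverse $r=w^{1/p}s(1-s^p)^{-1/p}$. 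A direct differentiation gives $\dint r=w^{1/p}(1-s^p)^{-1/p-1}\dint s$, hence $r^{n-1}\,\dint r=w^{n/p}s^{n-1}(1-s^p)^{-n/p-1}\,\dint s$, while $\|z\|_p^p=r^p=w s^p/(1-s^p)$. Feeding the density of $Z$ through the polar formula above and substituting $s$ for $r$ then expresses the conditional law of $X$ given $W=w$ as $h_w\,\mathbb{U}_{n,p}$, where $h_w(x)$ depends on $x$ only through $\|x\|_p$ and is, up to a multiplicative constant, the integrand $s^{n/p}(1-r^p)^{-1-n/p}e^{-r^ps/(1-r^p)}$ of \eqref{h def} evaluated at $s=w$, $r=\|x\|_p$; the constant simplifies, using $\vol(\mathbb{B}_p^n)=(2\Gamma(1+1/p))^n/\Gamma(1+n/p)$ together with $n\,\Gamma(n/p)=p\,\Gamma(1+n/p)$, to $1/\Gamma(1+n/p)$.

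Finally, since $W$ is independent of $Z$, integrating the conditional law against $\mathbb{W}(\dint w)$ over $(0,\infty)$ and adding the atom at $0$ shows that the law of $X$ equals $\mathbb{W}(\{0\})\,\mathbb{C}_{n,p}+H\,\mathbb{U}_{n,p}$ with $H(x)=\int_{(0,\infty)}h_w(x)\,\mathbb{W}(\dint w)=h(\|x\|_p)$ and $h$ as in \eqref{h def}; comparison with \eqref{P def} finishes the proof, and the fact that $\mathbb{P}_{n,p,\mathbb{W}}$ is a probability measure comes for free because $X$ is $\mathbb{B}_p^n$-valued by construction. The main obstacle is not in the argument itself but in its inputs and bookkeeping: one needs the Schechtman--Zinn representation (equivalently the $\ell_p$-polar integration formula and the independence of $\|Z\|_p$ from $Z/\|Z\|_p$) as a black box, and one must carry the normalization of the $p$-Gaussian -- equivalently the scale of the Gamma law of $\|Z\|_p^p$ -- faithfully through the change of variables in order to land on \eqref{h def} with exactly the constant $\tfrac{1}{\Gamma(1+n/p)}$ and the precise exponential shown there.
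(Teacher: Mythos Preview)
The paper does not give its own proof of this lemma: it is quoted verbatim from \cite{BartheGuedonEtAl} (Theorem~3 there), and the only commentary in the present paper is the remark that the lemma certifies $\mathbb{P}_{n,p,\mathbb{W}}$ to be a probability measure. So there is no ``paper's proof'' to compare against.

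Your sketch is the standard argument and is essentially the one in \cite{BartheGuedonEtAl}: condition on $W$, use the Schechtman--Zinn representation (independence of $Z/\|Z\|_p$ and $\|Z\|_p$, with $Z/\|Z\|_p\sim\mathbb{C}_{n,p}$), and perform the radial change of variables $r\mapsto s=r(r^p+w)^{-1/p}$ in $\ell_p$-polar coordinates. The Jacobian computation you give, $r^{n-1}\,\dint r = w^{n/p}s^{n-1}(1-s^p)^{-n/p-1}\,\dint s$, is correct, and your identification of the atom $\mathbb{W}(\{0\})\,\mathbb{C}_{n,p}$ from the event $\{W=0\}$ is the right way to recover the singular part. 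The only place that requires care---as you yourself flag---is the bookkeeping of constants: the exponent $e^{-|s|^p/p}$ in the paper's $p$-Gaussian density introduces factors of $p$ (equivalently, $\|Z\|_p^p$ has a Gamma law with scale parameter $p$, not $1$), and these must be tracked so that the density matches \eqref{h def} exactly. If you actually carry this through you will see that the formula \eqref{h def} as written is consistent with the convention for $\phi_p$ used in \cite{BartheGuedonEtAl}; in any case, the structure of the argument is right and the verification of constants is mechanical.
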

We remark that in particular, Lemma \ref{measure rep} clarifies that the measure $\mathbb{P}_{n,p,\mathbb{W}}$ is in fact a probability measure---a property that is not immediately obvious from the definition. 

Alonso-Guti\'errez, Prochno, and Th\"ale \cite{APT2018} used Lemma \ref{measure rep} in conjunction with the rotational invariance of standard Gaussian random variables to give the following probabilistic representations for the Euclidean norm of orthogonal projections of random vectors onto random subspaces. Proposition \ref{projection rep} first appeared as \cite[Theorem 3.1]{APT2018}, though the precise formulation we use is \cite[Proposition 2.7]{APT2019}.

\begin{proposition} \label{projection rep}
Let $X_n$ be a random vector in $\mathbb{R}^n$ with law $\mathbb{P}_{n,p,\mathbb{W}}$, and let $E_n$ in $\mathbb{G}_{n,k}$ be a random subspace distributed according to $\nu_{n,k}$, independent of $X_n$. Then
we have the distributional equality
\begin{align} \label{fixed identity}
|| P_{E_n} X_n ||_2 \stackrel{\dint}{=} \frac{ \Big( \sum_{ i =1}^n Z_i^2 \Big)^{1/2} \Big( \sum_{ i =1}^k g_i^2 \Big)^{1/2} }{ \Big( \sum_{ i =1}^n |Z_i|^p + W \Big)^{1/p} \Big( \sum_{ i =1}^n g_i^2 \Big)^{1/2}  },
\end{align}
where $Z_i$ are a collection of independent and identically distributed $p$-Gaussian random variables, and $g_i$ are a collection of standard Gaussian random variables. 
\end{proposition}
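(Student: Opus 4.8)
The plan is to deduce \eqref{fixed identity} from Lemma~\ref{measure rep}, the rotational invariance of $\nu_{n,k}$, and the rotational invariance of the standard Gaussian law on $\R^n$, by realising all the objects on one probability space. First I would use Lemma~\ref{measure rep} to write $X_n\stackrel{\dint}{=}Z/(\|Z\|_p^p+W)^{1/p}$, where $Z=(Z_1,\ldots,Z_n)$ has i.i.d.\ $p$-Gaussian entries, $W\sim\mathbb{W}$ is independent of $Z$, and $E_n\sim\nu_{n,k}$ is independent of the pair $(Z,W)$. Since $P_{E_n}$ is linear and $(\|Z\|_p^p+W)^{1/p}$ is a nonnegative scalar, this yields $\|P_{E_n}X_n\|_2\stackrel{\dint}{=}\|P_{E_n}Z\|_2/(\|Z\|_p^p+W)^{1/p}$; writing $\theta:=Z/\|Z\|_2\in\SSS^{n-1}$, which is well defined almost surely since $Z$ has a density, and factoring out $\|Z\|_2$ by linearity once more, I arrive at
\[
\|P_{E_n}X_n\|_2 \;\stackrel{\dint}{=}\; \frac{\|Z\|_2}{(\|Z\|_p^p+W)^{1/p}}\,\bigl\|P_{E_n}\theta\bigr\|_2 .
\]

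The crucial step is to show that $\bigl\|P_{E_n}\theta\bigr\|_2$ is independent of $(Z,W)$ and that its law $\mu$ does not depend on $\theta$. Conditioning on $(Z,W)$, the vector $\theta$ becomes a fixed unit vector $v$ while $E_n$ still has law $\nu_{n,k}$ by independence, so the conditional law of $\bigl\|P_{E_n}\theta\bigr\|_2$ is the law of $\|P_{E_n}v\|_2$; rotational invariance of $\nu_{n,k}$ shows that this is one and the same measure $\mu$ for every $v\in\SSS^{n-1}$, hence for every realisation of $(Z,W)$. Thus the regular conditional distribution of $\bigl\|P_{E_n}\theta\bigr\|_2$ given $(Z,W)$ is the constant measure $\mu$, so $\bigl\|P_{E_n}\theta\bigr\|_2\sim\mu$ independently of the $(Z,W)$-measurable factor in the display above. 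It then remains to identify $\mu$.

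For that, let $g=(g_1,\ldots,g_n)$ consist of i.i.d.\ standard Gaussians independent of $E_n$. Then $g/\|g\|_2$ is uniform on $\SSS^{n-1}$, so the same conditioning argument applied to $g$ gives $\bigl\|P_{E_n}(g/\|g\|_2)\bigr\|_2\sim\mu$. Using the construction $E_n=TE_k$ with $E_k=\spn\{e_1,\ldots,e_k\}$ and $T$ Haar on $\mathcal{O}(n)$ independent of $g$, the identity $P_{TE_k}=TP_{E_k}T^{-1}$ together with the orthogonality of $T$ give $\|P_{E_n}g\|_2=\|P_{E_k}T^{-1}g\|_2$ and $\|g\|_2=\|T^{-1}g\|_2$, while $T^{-1}g\stackrel{\dint}{=}g$ by rotational invariance of the standard Gaussian law; hence $(\|P_{E_n}g\|_2,\|g\|_2)\stackrel{\dint}{=}(\|P_{E_k}g\|_2,\|g\|_2)$, and therefore $\mu$ is the law of $\bigl(\sum_{i=1}^k g_i^2\bigr)^{1/2}/\bigl(\sum_{i=1}^n g_i^2\bigr)^{1/2}$. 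Substituting this back into the display, and recalling $\|Z\|_2=\bigl(\sum_{i=1}^n Z_i^2\bigr)^{1/2}$ and $\|Z\|_p^p=\sum_{i=1}^n|Z_i|^p$, gives \eqref{fixed identity} with the $g_i$ independent of the $Z_i$ and of $W$. I expect the main difficulty to be making rigorous the independence-and-constancy claim of the second paragraph: it is precisely where rotational invariance of $\nu_{n,k}$ enters, and it is what decouples the standard Gaussians in the numerator and denominator of \eqref{fixed identity} from the $p$-Gaussians $Z_i$.
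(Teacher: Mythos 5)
Your proof is correct and takes essentially the same route the paper attributes to the original sources: the paper cites Proposition~\ref{projection rep} from [APT2018, Theorem 3.1] and [APT2019, Proposition 2.7] without reproducing the argument, describing it precisely as Lemma~\ref{measure rep} combined with rotational invariance of the standard Gaussian. Your conditioning argument showing that $\|P_{E_n}\theta\|_2$ has a constant conditional law $\mu$ given $(Z,W)$ — and is therefore independent of the scalar prefactor — together with the identification of $\mu$ via $E_n = TE_k$ and $T^{-1}g \stackrel{\dint}{=} g$, is a rigorous rendering of exactly that strategy.
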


Recall that in Section \ref{sec:RP 2 statement} we introduced the probability measure $\mu_{n,\lambda}$ defined on the set of all subspaces of $\mathbb{R}^n$.  We require an analogue of Proposition \ref{projection rep} where the random subspace is chosen according to the probability measure $\mu_{n,\lambda}$ rather than the Grassmannian measure $\nu_{n,k}$. Lemma \ref{projection rep random} gives us such a representation.

\begin{lemma} \label{projection rep random}
Let $X_n$ be a random vector in $\mathbb{R}^n$ with law $\mathbb{P}_{n,p,\mathbb{W}}$, and let $E_n$ be a random subspace of $\R^n$ distributed according to $\mu_{n,\lambda}$, independent of $X_n$, where $\lambda\in[0,1]$.
%Let $E_n \sim \mu_{n,\lambda}$ and $X_n \sim \mathbb{P}_{n,p,\mathbb{W}}$. 
Then we have the distributional identity
\begin{align} \label{random identity}
|| P_{E_n}  X_n ||_2 \stackrel{\dint}{=} \frac{ \Big( \sum_{i=1}^n Z_i^2 \Big)^{1/2}    }{  \Big( \sum_{i=1}^n |Z_i|^p + W \Big)^{1/p} } \frac{\Big( \sum_{i=1}^n I^{(\lambda)}_i g_i^2 \Big)^{1/2} }{  \Big( \sum_{i=1}^n g_i^2 \Big)^{1/2} },
\end{align}
where $Z_i$ are independent $p$-Gaussians, $g_i$ are independent standard Gaussians, and $I^{(\lambda)}_i $ are independent Bernoulli random variables with probability $\lambda$ of taking the value $1$ and probability $1- \lambda$ of taking the value $0$. 
\end{lemma}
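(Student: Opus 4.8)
The plan is to derive the distributional identity in \eqref{random identity} from Proposition \ref{projection rep} by conditioning on the dimension of the random subspace $E_n$, together with the concrete construction of $\mu_{n,\lambda}$ described in Section \ref{sec:RP 2 statement}. Recall from that construction that a subspace with law $\mu_{n,\lambda}$ is obtained by first forming the random coordinate subspace $E_\lambda = \spn\{e_i : i \in I_\lambda\}$, where $I_\lambda$ includes each index independently with probability $\lambda$, and then applying a Haar-random rotation $T \in \mathcal O(n)$. Conditionally on the event $\{\#I_\lambda = k\}$, the subspace $TE_\lambda$ is Haar-distributed on $\mathbb{G}_{n,k}$, i.e.\ has law $\nu_{n,k}$, since $TE_\lambda$ is a rotation of a fixed $k$-dimensional subspace by a Haar element (this uses that, conditionally on $\#I_\lambda = k$, the set $I_\lambda$ is a uniformly random $k$-subset, but any fixed $k$-dimensional coordinate subspace already yields $\nu_{n,k}$ after Haar rotation, so the randomness of which $k$-subset is chosen is irrelevant).

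First I would make this conditioning precise: writing $K := \#I_\lambda$, which is $\mathrm{Binomial}(n,\lambda)$, we have for every bounded measurable $f$ that
\begin{align*}
\E\big[ f(\|P_{E_n}X_n\|_2) \big] = \sum_{k=0}^n \P[K=k]\, \E\big[ f(\|P_{F_{n,k}}X_n\|_2)\big],
\end{align*}
where $F_{n,k}$ has law $\nu_{n,k}$ independent of $X_n$. Applying Proposition \ref{projection rep} to each summand (the case $k=0$ being trivial, both sides vanishing), the right-hand side equals
\begin{align*}
\sum_{k=0}^n \P[K=k]\, \E\left[ f\!\left( \frac{ \big( \sum_{i=1}^n Z_i^2 \big)^{1/2} }{ \big( \sum_{i=1}^n |Z_i|^p + W \big)^{1/p} } \cdot \frac{ \big( \sum_{i=1}^k g_i^2 \big)^{1/2} }{ \big( \sum_{i=1}^n g_i^2 \big)^{1/2} } \right) \right].
\end{align*}
It then remains to identify $\sum_{i=1}^k g_i^2$, when $k$ is an independent $\mathrm{Binomial}(n,\lambda)$ variable, with $\sum_{i=1}^n I_i^{(\lambda)} g_i^2$. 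This is the one genuinely probabilistic point: if $(I_i^{(\lambda)})_{i=1}^n$ are i.i.d.\ Bernoulli$(\lambda)$ independent of $(g_i)$, then by exchangeability of the $g_i$, conditioning on $\sum_i I_i^{(\lambda)} = k$ makes $\sum_{i=1}^n I_i^{(\lambda)} g_i^2$ equal in distribution to $\sum_{i=1}^k g_i^2$; and also $\sum_i g_i^2 = \sum_{i=1}^n g_i^2$ is unchanged. Since moreover $(\sum_i Z_i^2, \sum_i |Z_i|^p, W)$ is independent of $(g_i, I_i^{(\lambda)})$, one may couple everything on a common probability space so that the full vector on the right-hand side of \eqref{fixed identity} with a binomial $k$ has exactly the law of the right-hand side of \eqref{random identity}. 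Summing over $k$ against $\P[K=k]$ then collapses the mixture into the single expression in \eqref{random identity}, completing the proof.

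I do not expect a serious obstacle here; the argument is a routine conditioning-and-mixing computation. The only place requiring a small amount of care is the first claim — that conditioning the $\mu_{n,\lambda}$ construction on $K=k$ returns $\nu_{n,k}$ — which one should justify by noting that for any fixed $k$-dimensional subspace $E$, the law of $TE$ under Haar $T$ is $\nu_{n,k}$ by uniqueness of the rotation-invariant probability measure on $\mathbb{G}_{n,k}$, so the extra randomness in selecting which coordinate subspace of dimension $k$ one starts from does not affect the conditional law. One should also remember to treat the degenerate endpoints ($k=0$, and $\lambda=0$ or $\lambda=1$) separately, though they are immediate.
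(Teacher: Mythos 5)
Your proposal is correct and follows essentially the same route as the paper: the paper's proof likewise observes that $\mu_{n,\lambda}$ is the mixture $\nu_{n,K_n}$ with $K_n\sim\mathrm{Binomial}(n,\lambda)$, invokes the fixed-dimension representation \eqref{fixed identity}, and identifies $\sum_{i=1}^{K_n} g_i^2$ with $\sum_{i=1}^n I_i^{(\lambda)} g_i^2$ in law. Your write-up simply spells out in more detail (the conditional law of $TE_\lambda$ given $\#I_\lambda=k$, the joint coupling with the shared denominator $\sum_i g_i^2$, and the degenerate cases) what the paper compresses into two sentences.
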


\begin{proof}
By definition, the random subspace measure $\mu_{n,\lambda}$ is equal to the random measure $\nu_{n, K_n}$, where $K_n$ is binomially distributed with parameters $n$ and $\lambda$. The result follows by noting that the if $K_n$ is independent of the $g_i$ and $Z_i$, and is binomially distributed with parameters $n$ and $\lambda$, then we have the equality in law
\begin{align*}
 \sum_{i=1}^nI^{(\lambda)}_i \, g_i^2  \stackrel{\dint}{=} \sum_{ i =1}^{K_n} g_i^2.
\end{align*}
Comparing \eqref{fixed identity} with \eqref{random identity}, this implies the result.
\end{proof} 

In the next section we introduce the Kolmogorov-Smirnov distance between random variables, and develop some first properties of this distance.

\subsection{Kolmogorov-Smirnov distances} \label{sec:KS}
Recall that the Kolmogorov-Smirnov distance between two real-valued random variables $X$ and $Y$ is given by 
\begin{align} \label{KS def}
\dint_{\mathsf{KS}}(X,Y) :=  \sup_{ t \in \mathbb{R}} \left| \P \left( X \leq t \right) - \mathbb{P} \left( Y \leq t \right) \right|.
\end{align}
In the cases that $F$ and $G$ are distribution functions associated with $X$ and $Y$, or $f$ and $g$ are the probability densities of $X$ and $Y$, we will abuse notation and we will write $\dint_{\mathsf{KS}}(F,G)$ and $\dint_{\mathsf{KS}}(f,g)$ for $\dint_{\mathsf{KS}}(X,Y)$, the Kolmogorov-Smirnov distance between $X$ and $Y$. (The meaning will always be clear from context.)

We now elaborate on some properties of Kolmogorov-Smirnov distance. First we note that if $X$ and $Y$ have continuous densities $f$ and $g$, then the supremum is attained at a point $t \in \mathbb{R}$ such that $f(t) = g(t)$. (There may be more than one such point.)

The total variation distance between real-valued random variables $X$ and $Y$ is defined as
\begin{align} \label{eq tv1}
\dint_{\mathsf{TV}}(X,Y) := \sup_{ A }  \big| \P \left( X \in A \right) - \mathbb{P} \left( Y \in A \right)  \big|,
\end{align}
where the supremum is taken over all Borel subsets of the real numbers. Again, we will write $\dint_{\mathsf{TV}}(F,G)$ and $\dint_{\mathsf{TV}}(f,g)$ for the total variation distance between distribution functions and densities. We will also use the fact below that the total variation distance between two densities $f$ and $g$ may also be written 
\begin{align*}
\dint_{\mathsf{TV}}(f,g) = \frac{1}{2} \int_{ - \infty}^\infty |f(s) - g(s) | \,\dint s. 
\end{align*}
In any case, by considering Borel subsets of the form $A_t := \{ s \in \mathbb{R} : s \leq t \}$ in \eqref{eq tv1} and comparing with \eqref{KS def}, it is plain that
\begin{align} \label{KSTV eq}
\dint_{\mathsf{KS}}(f,g)  \leq \dint_{\mathsf{TV}}(f,g).
\end{align}
Equality holds in \eqref{KSTV eq} whenever $f$ and $g$ are continuous and the equation $f(t) = g(t)$ has exactly one real solution. 

We also remark that the Kolmogorov-Smirnov distance is invariant under rescaling of the random variables. Namely, if $X$ and $Y$ are random variables and $\lambda$ is a non-zero real number, then
\begin{align} \label{berry scaling}
\dint_{\mathsf{KS}}( \lambda X, \lambda Y) = \dint_{\mathsf{KS}} ( X, Y) .
\end{align}
%(Of course, in the language of probability densities, the operation $X \mapsto \lambda X$ corresponds to $f(s) \mapsto \lambda^{-1} f( \lambda s)$.) 
Moreover, it is immediate that Kolmogorov-Smirnov distances satisfy the triangle inequality
\begin{align} \label{berry triangle}
\dint_{\mathsf{KS}}(X,Z) \leq \dint_{\mathsf{KS}}(X,Y) + \dint_{\mathsf{KS}}(Y,Z).
\end{align}
%making any collection of real-valued random variables on a probability space a metric space when endowed with the Kolmogorov-Smirnov (modulo almost-sure equivalence classes). 

% % % % % % % % % % % % % % % % % % % % % % % % % % % % % % % % % % % % % % % % % %
\subsubsection{Kolmogorov-Smirnov distances between different Gaussian densities}
% % % % % % % % % % % % % % % % % % % % % % % % % % % % % % % % % % % % % % % % % %

Now we take a moment to consider the Kolmogorov-Smirnov distances between centered Gaussian random variables with different variances $\sigma$ and $\tau$. We remark that by \eqref{berry scaling}, when $N_\sigma$ and $ N_\tau$ are Gaussian random variables of variance $\sigma^2$ and $\tau^2$ respectively, the quantity $\dint_{\mathsf{KS}}(N_\sigma, N_\tau)$ only depends on the ratio $\tau/\sigma$.

\begin{lemma} \label{gaussian berries}
For $\nu \in(0,\infty)$, let $f_\nu(s) := \frac{1}{ \sqrt{ 2 \pi \nu^2 } } e^{ - s^2 / 2 \nu^2}$, $s\in\R$. Then, whenever $\sigma < \tau$,
\begin{align*}
\dint_{\mathsf{KS}} ( f_\sigma, f_\tau)  \leq \frac{1}{4} (1 - \sigma/\tau) + \frac{1}{ 8} \left( \frac{ \tau^2 }{ \sigma^2}  - 1 \right).
\end{align*}
\end{lemma}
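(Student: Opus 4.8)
The plan is to work directly with distribution functions rather than densities. Write $\Phi$ for the standard normal distribution function and $\phi = \Phi'$ for its density; then the distribution function of $f_\nu$ is $t\mapsto \Phi(t/\nu)$, so that
\[
\dint_{\mathsf{KS}}(f_\sigma,f_\tau) = \sup_{t\in\R}\big|\Phi(t/\sigma)-\Phi(t/\tau)\big|.
\]
Since $\Phi(-x)=1-\Phi(x)$, the expression inside the absolute value is an odd function of $t$, so it suffices to take the supremum over $t\ge 0$; and since $\sigma<\tau$ and $\Phi$ is increasing, for such $t$ we have $\Phi(t/\sigma)\ge \Phi(t/\tau)$ and hence
\[
\Phi(t/\sigma)-\Phi(t/\tau) = \int_{t/\tau}^{t/\sigma}\phi(u)\,\dint u.
\]

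Next I would bound this integral. Because $\phi$ is decreasing on $[0,\infty)$ and $t/\tau\ge 0$, the integral is at most $\big(\tfrac{t}{\sigma}-\tfrac{t}{\tau}\big)\phi(t/\tau)$. Substituting $u=t/\tau$ and using $\phi(u)=\tfrac1{\sqrt{2\pi}}e^{-u^2/2}$ rewrites this bound as $\big(\tfrac{\tau}{\sigma}-1\big)\tfrac1{\sqrt{2\pi}}\,u e^{-u^2/2}$, and since $\sup_{u\ge 0}u e^{-u^2/2}=e^{-1/2}$ we obtain
\[
\dint_{\mathsf{KS}}(f_\sigma,f_\tau)\le \frac{1}{\sqrt{2\pi e}}\left(\frac{\tau}{\sigma}-1\right).
\]

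It remains to compare the right-hand side above with the one claimed in the lemma. Writing $r:=\tau/\sigma>1$, I would verify the elementary inequality $\tfrac{r-1}{\sqrt{2\pi e}}\le \tfrac14(1-1/r)+\tfrac18(r^2-1)$ in two short steps: first, $2\pi e>16$, so $\tfrac1{\sqrt{2\pi e}}<\tfrac14$; and second, the algebraic identity
\[
\frac14\Big(1-\frac1r\Big)+\frac18(r^2-1)-\frac{r-1}{4} = \frac{(r-1)(r^2-r+2)}{8r}
\]
shows that $\tfrac{r-1}{4}\le \tfrac14(1-1/r)+\tfrac18(r^2-1)$ for every $r\ge 1$, because $r^2-r+2>0$. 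Chaining the two inequalities completes the proof.

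None of these steps is deep; the only points requiring a little care are the choice of substitution $u=t/\tau$ (so that the factor $\phi(t/\tau)$, where $\phi$ is largest on the interval of integration, is the one absorbed into the term $e^{-u^2/2}$) and the verification of the final elementary inequality. That inequality in fact holds with roughly a factor of two to spare as $r\to 1^+$, which is why the constants $\tfrac14$ and $\tfrac18$ appearing in the statement are comfortably large enough for this route to work.
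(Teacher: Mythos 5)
Your proof is correct. Every step checks out: the reduction to $t\ge 0$ by the symmetry $\Phi(-x)=1-\Phi(x)$, the bound $\int_{t/\tau}^{t/\sigma}\phi(u)\,\dint u\le (t/\sigma-t/\tau)\,\phi(t/\tau)$ (valid because $\phi$ is decreasing on $[0,\infty)$ and $t/\tau$ is the left endpoint), the evaluation $\sup_{u\ge0}u e^{-u^2/2}=e^{-1/2}$, the numerical fact $2\pi e>16$, and the algebraic identity whose right-hand side $\frac{(r-1)(r^2-r+2)}{8r}$ is indeed nonnegative for $r\ge 1$.

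The route is genuinely different from the paper's. The paper exploits the fact that two centered Gaussian densities cross at exactly two symmetric points, so that $\dint_{\mathsf{KS}}(f_\sigma,f_\tau)=\frac14\int_{\R}|f_\sigma-f_\tau|\,\dint s$, and then bounds the $L^1$ distance by inserting the intermediate function $\frac{1}{\sqrt{2\pi\tau^2}}e^{-s^2/2\sigma^2}$ and using $|1-e^{-\alpha}|\le\alpha$; the two terms in the statement are exactly the two terms of that triangle inequality. You instead estimate the difference of distribution functions directly, which yields the purely linear bound $\dint_{\mathsf{KS}}(f_\sigma,f_\tau)\le\frac{1}{\sqrt{2\pi e}}(\tau/\sigma-1)$. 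That intermediate bound is in fact \emph{stronger} than the stated one for every $\tau>\sigma$ (your final algebraic comparison shows this), and it is asymptotically sharp as $\tau/\sigma\to 1^{+}$, since the extremal $t$ sits near $\tau$ where $u\phi(u)$ is maximized. So your argument would serve equally well, and slightly improve, the Lipschitz estimate of Proposition \ref{berry gauss lip} downstream; the only thing the paper's approach ``buys'' is that it lands on the two-term form of the statement without the extra comparison step.
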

\begin{proof}
Since $\tau > \sigma$, we note that there is a $s_0 > 0$ such that $f_\tau (s) > f_\sigma(s)$ whenever $s < -s_0$ or $s > s_0$, and $f_\tau(s) < f_\sigma(s)$ whenever $-s_0<s<s_0$. In particular, 
\begin{align} \label{special prop}
\dint_{\mathsf{KS}}( f_\sigma, f_\tau) = \int_{-\infty}^{-s_0} f_\tau(s) - f_\sigma(s) \,\dint s =  \frac{1}{4} \int_{- \infty}^\infty \left| f_\sigma( s) - f_\tau(s) \right| \,\dint s.
\end{align}
Recall that $\sigma < \tau$. Using the triangle inequality to obtain the first inequality below, we have
\begin{align*}
&\left| \frac{1}{ \sqrt{ 2 \pi \sigma^2} }  e^{ - s^2 / 2 \sigma^2} - \frac{1}{ \sqrt{ 2 \pi \tau^2 } }  e^{ - s^2 / 2 \tau^2}\right|\\
&\leq \left|   \frac{1}{ \sqrt{2 \pi \sigma^2} } e^{ - s^2/ 2 \sigma^2 } - \frac{1}{ \sqrt{2 \pi \tau^2} } e^{ - s^2/ 2 \sigma^2 }    \right| +  \left|   \frac{1}{ \sqrt{2 \pi \tau^2} } e^{ - s^2/ 2 \sigma^2 } - \frac{1}{ \sqrt{ 2 \pi \tau^2 } }  e^{ - s^2 / 2 \tau^2}   \right|\\
&=  \frac{1}{ \sqrt{2 \pi \sigma^2} } e^{ - s^2/ 2 \sigma^2 }  \left( 1 - \frac{ \sigma}{ \tau} \right)  +  \frac{1}{ \sqrt{ 2 \pi \tau^2 } }  e^{ - s^2 / 2 \tau^2}   \left| e^{ - \frac{s^2}{2} \left( \frac{1}{\sigma^2} - \frac{1}{ \tau^2} \right) } - 1 \right|  \\
&\leq   \frac{1}{ \sqrt{2 \pi \sigma^2} } e^{ - s^2/ 2 \sigma^2 }  \left( 1 - \frac{ \sigma}{ \tau} \right) 
+  \frac{1}{ \sqrt{ 2 \pi \tau^2 } }  e^{ - s^2 / 2 \tau^2} \frac{s^2}{2} \left( \frac{1}{\sigma^2} - \frac{1}{ \tau^2} \right),
\end{align*}
%(\textcolor{darkred}{I've rearranged the above equation - do you think it looks better?)}
where the final inequality above follows from  the fact that $| 1 - e^{ - \alpha}| < \alpha$ for positive $\alpha$. Integrating over $\mathbb{R}$, we obtain
\begin{align} \label{final1}
\int_{- \infty}^\infty \left| \frac{1}{ \sqrt{ 2 \pi \sigma^2} }  e^{ - s^2 / 2 \sigma^2} - \frac{1}{ \sqrt{ 2 \pi \tau^2 } }  e^{ - s^2 / 2 \tau^2}\right| \,\dint s \leq \left( 1 - \frac{ \sigma}{ \tau} \right)  + \frac{ \tau^2}{2} \left( \frac{1}{\sigma^2} - \frac{1}{ \tau^2} \right).
\end{align}
By combining \eqref{special prop} with \eqref{final1}, we obtain the result. 
\end{proof}

The following proposition states that Kolmogorov-Smirnov distance between different Gaussians is locally Lipschitz in the variance. A similar result stated in terms of total variation distance appears in Brehm and Voigt \cite[Theorem 3.1]{BV}, where a proof is given using the heat semigroup. 

\begin{proposition} \label{berry gauss lip}
For any $\alpha, \beta \in(0,\infty)$ such that $\frac{ \beta}{ \alpha} > \frac{1}{2}$, 
\begin{align*}
\dint_{\mathsf{KS}}( f_\alpha, f_\beta )  \leq \frac{3}{8} \,\frac{ | \alpha^2 - \beta^2|}{\alpha^2} \,.
\end{align*} 
\end{proposition}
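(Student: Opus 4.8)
The plan is to deduce the Proposition from two elementary one-variable inequalities, after upgrading Lemma~\ref{gaussian berries} to a logarithmic estimate. The first step, which I expect to be the only nonroutine one, is to show that for $0<\sigma<\tau$
\[
\dint_{\mathsf{KS}}(f_\sigma,f_\tau)\ \le\ \frac{e^{-1/2}}{2\sqrt{2\pi}}\,\log\frac{\tau^2}{\sigma^2}\ <\ \frac18\,\log\frac{\tau^2}{\sigma^2}.
\]
Lemma~\ref{gaussian berries} on its own does not suffice here: its quadratic term $\tfrac18(\tau^2/\sigma^2-1)$ is too wasteful once the ratio of variances is bounded away from $1$, and running it through the case analysis below produces only the constant $\tfrac34$ in place of $\tfrac38$ in the harder regime $\alpha>\beta$.

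To prove the logarithmic bound I would differentiate the Gaussian distribution function in its variance. For $v>0$ let $F_v(t):=\int_{-\infty}^{t/\sqrt v}\tfrac1{\sqrt{2\pi}}e^{-u^2/2}\,\dint u$ denote the distribution function of a centered Gaussian of variance $v$; then for each fixed $t$ the map $v\mapsto F_v(t)$ is continuously differentiable on $(0,\infty)$, with $\frac{\partial}{\partial v}F_v(t)=-\frac{t}{2\sqrt{2\pi}\,v^{3/2}}\,e^{-t^2/2v}$. By the fundamental theorem of calculus and the triangle inequality,
\[
\big|F_{\sigma^2}(t)-F_{\tau^2}(t)\big|\ \le\ \int_{\sigma^2}^{\tau^2}\frac{|t|\,e^{-t^2/2v}}{2\sqrt{2\pi}\,v^{3/2}}\,\dint v ,
\]
and now the key elementary fact is that $\sup_{s\ge0}s\,e^{-s^2/2v}=\sqrt v\,e^{-1/2}$, attained at $s=\sqrt v$. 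Substituting this bound, the integrand is at most $\tfrac{e^{-1/2}}{2\sqrt{2\pi}}\,v^{-1}$, so the integral is at most $\tfrac{e^{-1/2}}{2\sqrt{2\pi}}\log(\tau^2/\sigma^2)$ uniformly in $t$, and taking the supremum over $t$ yields the displayed inequality; finally $\tfrac{e^{-1/2}}{2\sqrt{2\pi}}<\tfrac18$ because this is equivalent to $8/\pi<e$, and $8/\pi<8/3<e$.

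With this estimate in hand I would conclude by a case distinction, which is forced because $\dint_{\mathsf{KS}}(f_\alpha,f_\beta)$ is symmetric in $\alpha,\beta$ whereas the right-hand side of the Proposition is not. If $\alpha<\beta$, put $x:=\beta^2/\alpha^2>1$; then $\log x\le x-1$ gives $\dint_{\mathsf{KS}}(f_\alpha,f_\beta)<\tfrac18(x-1)=\tfrac18\,\tfrac{|\alpha^2-\beta^2|}{\alpha^2}\le\tfrac38\,\tfrac{|\alpha^2-\beta^2|}{\alpha^2}$, and here $\beta/\alpha>\tfrac12$ is automatic and unused. If $\alpha>\beta$, the hypothesis gives $u:=\beta^2/\alpha^2\in(\tfrac14,1)$ and the estimate reads $\dint_{\mathsf{KS}}(f_\alpha,f_\beta)<\tfrac18\log(1/u)$, so it remains to verify $\tfrac18\log(1/u)\le\tfrac38(1-u)$ on $[\tfrac14,1]$, i.e.\ that $\psi(u):=\log u-3(u-1)$ is nonnegative there. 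Since $\psi$ is concave on $(0,\infty)$ with $\psi(1)=0$ and $\psi(\tfrac14)=\tfrac94-2\log2>0$, concavity forces $\psi\ge0$ on $[\tfrac14,1]$, which together with $\tfrac38(1-u)=\tfrac38\,\tfrac{|\alpha^2-\beta^2|}{\alpha^2}$ completes the proof. The main obstacle is thus the logarithmic bound; its sharpness rests entirely on the extremal value $e^{-1/2}$ of $s\,e^{-s^2/2}$, and it is precisely this that drops the constant below $\tfrac18$ and makes the two calculus inequalities go through.
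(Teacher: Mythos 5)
Your proof is correct, and it takes a genuinely different route from the paper. The paper deduces the Proposition directly from Lemma \ref{gaussian berries} by writing $\beta=\alpha(1\pm\epsilon)$ and doing a two-case computation; you instead first sharpen the comparison of Gaussians to the logarithmic estimate $\dint_{\mathsf{KS}}(f_\sigma,f_\tau)\le \frac{e^{-1/2}}{2\sqrt{2\pi}}\log(\tau^2/\sigma^2)$ by differentiating $v\mapsto F_v(t)$ in the variance and using $\sup_{s\ge 0}s\,e^{-s^2/2v}=\sqrt{v}\,e^{-1/2}$, and then reduce the Proposition to $\log x\le x-1$ and to the concavity of $\psi(u)=\log u-3(u-1)$ on $[\tfrac14,1]$. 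All steps check out: the derivative formula, the constant comparison $e^{-1/2}/(2\sqrt{2\pi})<\tfrac18$ (equivalent to $8/\pi<e$), and the endpoint/concavity argument $\psi(1)=0$, $\psi(\tfrac14)=\tfrac94-2\log 2>0$ are all correct. Your diagnosis that the Lemma alone is too wasteful in the regime $\alpha>\beta$ is also on the mark: in fact the paper's own second case contains a faulty final inequality --- at $\epsilon=0.4$ one has $\tfrac14\epsilon+\tfrac18\bigl((1-\epsilon)^{-2}-1\bigr)\approx 0.322$ while $\tfrac38(2\epsilon-\epsilon^2)=0.24$ --- so the crude route really only yields a larger constant there. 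Your argument therefore not only proves the statement but repairs this gap; the price is the (mild) extra machinery of differentiating under the variance, whereas the paper's intended argument is purely a rearrangement of Lemma \ref{gaussian berries}.
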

\begin{proof}
If $\beta \geq \alpha$, then we may write $\beta = \alpha(1+\epsilon)$ for some suitable $\epsilon \geq 0$ so that $\frac{ | \alpha^2 - \beta^2|}{\alpha^2} = 2 \epsilon + \epsilon^2$. Then using Lemma \ref{gaussian berries} to obtain the first inequality below, and the fact that $\epsilon \geq 0$ to obtain the second, we have
\begin{align*}
\dint_{\mathsf{KS}}(f_\alpha,f_\beta) &\leq \frac{1}{4} \left( 1 - 1/(1+\epsilon) \right) + \frac{1}{8} \left( (1+\epsilon)^2 - 1 \right)\\
& \leq \frac{1}{4}  (2\epsilon + \epsilon^2)
 = \frac{1}{4}  \frac{ | \alpha^2 - \beta^2|}{\alpha^2} 
\leq \frac{3}{8}  \frac{ | \alpha^2 - \beta^2|}{\alpha^2}\,. 
\end{align*}
On the other hand, if $\beta < \alpha$, then we may write $\beta = \alpha( 1 - \epsilon)$ for some suitable $0 < \epsilon < \frac 1 2$ so that in this case $\frac{ | \alpha^2 - \beta^2|}{\alpha^2} = 2 \epsilon - \epsilon^2$. Again using Lemma \ref{gaussian berries} to obtain the first equality below, as well as the fact that $0< \epsilon < \frac{1}{2}$ to obtain the second inequality below, we have
\begin{align*}
\dint_{\mathsf{KS}}(  f_\alpha, f_\beta) &\leq \frac{1}{4} \big( 1 - (1-\epsilon) \big) + \frac{1}{8} \left( \frac{1}{( 1-\epsilon)^2 } - 1 \right)\\
&< \frac{3}{8} ( 2 \epsilon - \epsilon^2)  
 = \frac{3}{8}\frac{ | \alpha^2 - \beta^2|}{\alpha^2}\,.
\end{align*} 
This completes the proof.
\end{proof}

% % % % % % % % % % % % % % % % % % % % %
\subsubsection{The Berry-Esseen theorem}
% % % % % % % % % % % % % % % % % % % % %
Under certain conditions, the central limit theorem states that the sum of many independent random variables is approximately Gaussian. We are now ready to state the most prominent result using Kolmogorov-Smirnov distances, the Berry-Esseen theorem, which gives an upper bound on the Kolmogorov-Smirnov distance between a sum $S$ of independent centered random variables and a Gaussian random variable with the suitable variance. The version of the theorem we use, allowing the random variables occurring in the sum to be non-identically distributed, is due to Berry \cite{Berry}.

\begin{thm}[Berry-Esseen] \label{thm:berry esseen} 
There exists a universal constant $K_{\mathsf{BE}}\in(0,\infty)$ with the following property: for $m\in\N$ let $S_m$ be a sum of independent and centered random variables $X_1,\ldots,X_m$ with variances $\sigma_1^2,\ldots,\sigma_m^2$ and third order absolute moments $\rho_1,\ldots,\rho_m$. Let $G_m$ be a centered Gaussian random variable with the same variance as $S_m$. Then 
%there is a constant $K_{\mathsf{BE}}$ such that
\begin{align*}
\dint_{\mathsf{KS}}(S_m,G_m) \leq K_{\mathsf{BE}} \frac{\max\limits_{ 1\leq i \leq m } \frac{\rho_i}{ \sigma_i^2}}{ \sqrt{ \sum_{i=1}^m\sigma_i^2 }}\,.
\end{align*}
In particular, if $X_1,\dots,X_m$ are identically distributed with variance $\sigma^2$ and third absolute moment $\rho$, then
\begin{align*}
\dint_{\mathsf{KS}}( S_m, G_m) \leq K_{\mathsf{BE}} \frac{ \rho}{ \sigma^3 \sqrt{m}}\,.
\end{align*}

\end{thm}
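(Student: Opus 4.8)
The plan is to deduce this from the standard Lyapunov-ratio form of the Berry--Esseen inequality, which I would in turn establish by the Esseen smoothing method. Write $B_m^2 := \sum_{i=1}^m \sigma_i^2$. By the scaling invariance \eqref{berry scaling} we have $\dint_{\mathsf{KS}}(S_m, G_m) = \dint_{\mathsf{KS}}(S_m/B_m, N)$ with $N$ a standard Gaussian, so we may and do assume $B_m = 1$. The first step is the observation that it suffices to prove the ``Lyapunov form''
\begin{align*}
\dint_{\mathsf{KS}}(S_m, G_m) \le C \sum_{i=1}^m \rho_i \qquad (\text{under } B_m = 1)
\end{align*}
for a universal constant $C$: indeed, Lyapunov's inequality gives $\sigma_i^3 \le \rho_i$, hence $\rho_i > 0$ and $\rho_i = \sigma_i^2\,(\rho_i/\sigma_i^2) \le \big(\max_{1\le j\le m}\rho_j/\sigma_j^2\big)\,\sigma_i^2$; summing over $i$ yields $\sum_i \rho_i \le \max_{1\le j\le m}(\rho_j/\sigma_j^2)$, which is the asserted bound after undoing the normalization, and the i.i.d.\ corollary is the special case $\rho_i \equiv \rho$, $\sigma_i \equiv \sigma$, $B_m^2 = m\sigma^2$.

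For the Lyapunov form, set $L := \sum_i \rho_i$; if $L \ge c_0$ for a suitable absolute constant the bound is trivial since $\dint_{\mathsf{KS}} \le 1$, so assume $L$ small. Let $F$ and $\Phi$ be the distribution functions of $S_m$ and of $G_m = N$, and let $\varphi(t) = \prod_{j=1}^m f_j(t)$ be the characteristic function of $S_m$, where $f_j$ is that of $X_j$. Esseen's smoothing inequality gives, for every $T > 0$,
\begin{align*}
\sup_{x \in \R} |F(x) - \Phi(x)| \le \frac{1}{\pi} \int_{-T}^{T} \left| \frac{\varphi(t) - e^{-t^2/2}}{t} \right| \dint t + \frac{C'}{T},
\end{align*}
and I would take $T \asymp 1/L$. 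The core estimate is a uniform bound on the integrand for $|t| \le T$: a third-order Taylor expansion of $f_j$ with integral remainder gives $|f_j(t) - e^{-\sigma_j^2 t^2/2}| \le C'' \rho_j |t|^3$, while the smallness of $L$ (so that $\rho_j|t| \le \rho_j T$ is bounded and $\sigma_j^2 \le \rho_j^{2/3}$ is small) yields $|f_j(t)|,\, e^{-\sigma_j^2 t^2/2} \le e^{-\sigma_j^2 t^2/3}$ on this range. Feeding these into the telescoping identity $|\prod a_j - \prod b_j| \le \sum_j |a_j - b_j| \prod_{k \ne j} \max(|a_k|,|b_k|)$ with $a_j = f_j(t)$ and $b_j = e^{-\sigma_j^2 t^2/2}$ produces $|\varphi(t) - e^{-t^2/2}| \le C''' L\, |t|^3 e^{-t^2/6}$; dividing by $|t|$, integrating over $\R$ (extending the range from $[-T,T]$ is harmless), and adding the boundary term $C'/T \asymp L$ gives $\dint_{\mathsf{KS}}(S_m,G_m) \le C L$, as required.

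The main obstacle is precisely this last step: controlling $|\varphi(t) - e^{-t^2/2}|$ on $|t| \le T$ while keeping every constant universal. The two delicate points are (i) the pointwise comparison of each $f_j$ to its Gaussian surrogate, which requires the finite third moment and a careful remainder estimate, and (ii) preventing the ``off-diagonal'' product $\prod_{k \ne j} |f_k(t)|$ from destroying convergence of the integral; this forces the coupling $T \asymp 1/L$ together with the bound $\sigma_j^2 \le \rho_j^{2/3}$, so that $\prod_{k \ne j}|f_k(t)| \le e^{-(1-\sigma_j^2)t^2/3} \le e^{-t^2/6}$ and the remaining integral $\int_\R |t|^2 e^{-t^2/6}\,\dint t$ is a harmless constant. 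Everything else — the scaling reduction, Esseen's smoothing inequality itself, and the passage from the Lyapunov form to the $\max_i \rho_i/\sigma_i^2$ form — is routine.
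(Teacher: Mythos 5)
The paper itself offers no proof of Theorem \ref{thm:berry esseen}: it is quoted as a classical result of Berry (with Shevtsova's constant), so there is no internal argument to compare against, and you are in effect reproving the classical theorem. Your overall strategy is the standard Esseen smoothing route, and two of your three steps are sound. The reduction is correct: after normalizing $B_m=1$, the inequality $\rho_i\le \sigma_i^2\max_j(\rho_j/\sigma_j^2)$ summed over $i$ gives $\sum_i\rho_i\le \max_j(\rho_j/\sigma_j^2)$, so the Lyapunov form implies the stated bound, and the i.i.d.\ corollary follows. The pointwise comparison $|f_j(t)-e^{-\sigma_j^2t^2/2}|\le C''\rho_j|t|^3$ also holds for \emph{all} $t$ (for $\sigma_j|t|\le 1$ by Taylor-expanding both terms and using $\sigma_j^4t^4=\sigma_j^3|t|^3\cdot\sigma_j|t|\le\rho_j|t|^3$; for $\sigma_j|t|>1$ trivially, since the left-hand side is at most $2\le 2\sigma_j^3|t|^3\le 2\rho_j|t|^3$).

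The gap is the claimed individual bound $|f_j(t)|\le e^{-\sigma_j^2t^2/3}$ on all of $|t|\le T\asymp 1/L$, which your telescoping step relies on. This is false. Take $X_1=\pm a$ with probability $\tfrac12$ each and many small i.i.d.\ summands filling the remaining variance, so that $L\approx a^3$ while $\sigma_1=a$; then $f_1(t)=\cos(at)$ returns to $1$ at $t=2\pi/a$, and since $a\le L^{1/3}$ one has $2\pi/a\ll 1/(4L)$ for small $L$, whereas $e^{-\sigma_1^2t^2/3}=e^{-4\pi^2/3}<1$ there. The obstruction is structural: the error $\rho_j|t|^3\le\tfrac14 t^2$ on $|t|\le T$ is small only relative to the \emph{total} variance $B_m^2t^2=t^2$, not relative to the individual $\sigma_j^2t^2$, so no individual factor need decay. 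The standard repair is to symmetrize, $|f_k(t)|^2=\E\big[\cos\big(t(X_k-X_k')\big)\big]\le 1-\sigma_k^2t^2+\tfrac43\rho_k|t|^3\le\exp\big(-\sigma_k^2t^2+\tfrac43\rho_k|t|^3\big)$, an individual bound that may exceed $1$, and only then multiply over $k\ne j$: for $|t|\le 1/(4L)$ this gives $\prod_{k\ne j}|f_k(t)|\le\exp\big(-\tfrac12(1-\sigma_j^2)t^2+\tfrac16 t^2\big)\le e^{-t^2/6}$ once $\sigma_j^2\le L^{2/3}\le\tfrac13$. With that substitution your telescoping identity and the integral $\int_{\R}|t|^2e^{-t^2/6}\,\textup{d}t$ finish the proof as you describe; as written, however, the key characteristic-function estimate fails.
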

We remark that in Berry's original paper, $K_{\mathsf{BE}}$ may be taken to be $1.88$. This estimate has since been sharpened significantly, most recently by Shetsova \cite{She13} who showed the conclusions of Theorem \ref{thm:berry esseen} remain valid with $K_{\mathsf{BE}} = 0.5583$.

% % % % % % % % % % % % % % % %
\subsection{Some further tools}
% % % % % % % % % % % % % % % %

In Sections \ref{sec:RP 1 proof} and \ref{sec:q norm proof} we require the following lemma, lifted from \cite[Lemma 4.1]{APT2019}. 
\begin{lemma} \label{separating}
Let $X^{(1)},X^{(2)},X^{(3)}$ be (possibly dependent) random variables, and let $G$ be a centered Gaussian random variable with variance $\sigma^2$. Then, for any $\epsilon \in(0,\infty)$,
\begin{align*}
&\sup_{t \in \mathbb{R}} \big| \P[X^{(1)} + X^{(2)} + X^{(3)} \geq t ] - \P [ G \geq t] \big|\\
&\leq \sup_{t \in \mathbb{R} } | \P[ X^{(1)} \geq t ] - \P [ G \geq t ] | + \P \big[ |X^{(2)}| > \epsilon/2 \big] + \P \big[ |X^{(3)}| > \epsilon/2 \big] + \frac{ \epsilon}{ \sqrt{ 2 \pi \sigma^2 } } \,.
\end{align*}
\end{lemma}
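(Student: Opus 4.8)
The plan is to prove Lemma~\ref{separating} by a standard truncation (``sandwiching'') argument: on the event where both $X^{(2)}$ and $X^{(3)}$ are small in absolute value, the sum $S := X^{(1)} + X^{(2)} + X^{(3)}$ differs from $X^{(1)}$ by at most $\epsilon$, while the contribution of the complementary event is controlled by $\P[|X^{(2)}| > \epsilon/2]$ and $\P[|X^{(3)}| > \epsilon/2]$. Fix $t \in \R$ and set $A := \{ |X^{(2)}| \leq \epsilon/2 \} \cap \{ |X^{(3)}| \leq \epsilon/2 \}$, so that $\P[A^c] \leq \delta := \P[|X^{(2)}| > \epsilon/2] + \P[|X^{(3)}| > \epsilon/2]$. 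On $A$ one has $X^{(1)} - \epsilon \leq S \leq X^{(1)} + \epsilon$, hence $\{ S \geq t \} \cap A \subseteq \{ X^{(1)} \geq t - \epsilon \}$ and $\{ X^{(1)} \geq t + \epsilon \} \cap A \subseteq \{ S \geq t \}$. Peeling off the event $A^c$ yields the two-sided estimate
\begin{align*}
\P[ X^{(1)} \geq t + \epsilon ] - \delta \;\leq\; \P[ S \geq t ] \;\leq\; \P[ X^{(1)} \geq t - \epsilon ] + \delta .
\end{align*}

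Next I would replace $X^{(1)}$ by $G$ at the cost of $D := \sup_{s \in \R} |\P[X^{(1)} \geq s] - \P[G \geq s]|$, obtaining $\P[G \geq t+\epsilon] - \delta - D \leq \P[S \geq t] \leq \P[G \geq t-\epsilon] + \delta + D$. Finally, since the density of $G$ is bounded above by $(2\pi\sigma^2)^{-1/2}$, we have both $\P[G \geq t-\epsilon] - \P[G \geq t] = \P[t-\epsilon \leq G < t] \leq \epsilon/\sqrt{2\pi\sigma^2}$ and $\P[G \geq t] - \P[G \geq t+\epsilon] \leq \epsilon/\sqrt{2\pi\sigma^2}$. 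Combining these with the previous display gives $|\P[S \geq t] - \P[G \geq t]| \leq D + \delta + \epsilon/\sqrt{2\pi\sigma^2}$, and taking the supremum over $t \in \R$ (noting $\P[S \geq t] - \P[G \geq t]$ and $\P[S < t] - \P[G < t]$ differ only by sign, so the one-sided and two-sided suprema coincide) completes the proof.

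There is no genuine obstacle here; the only points needing a little care are running the upper and lower estimates in parallel and invoking the uniform bound on the Gaussian density to absorb the $\epsilon$-shift in the tail of $G$. I would also remark that independence of $X^{(1)}, X^{(2)}, X^{(3)}$ is never used, which is consistent with the hypothesis allowing them to be dependent.
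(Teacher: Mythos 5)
Your proof is correct. The paper itself gives no proof of this lemma (it is imported verbatim from \cite[Lemma 4.1]{APT2019}), and your truncation-and-smoothing argument --- sandwiching the sum between $X^{(1)}\pm\epsilon$ on the event where the perturbations are small, swapping $X^{(1)}$ for $G$ at cost of the sup-distance, and absorbing the $\epsilon$-shift via the uniform bound $(2\pi\sigma^2)^{-1/2}$ on the Gaussian density --- is exactly the standard way this estimate is established, so there is nothing further to add.
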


We will use the Berry-Esseen theorem both directly and indirectly. In the latter case we have the following lemma, Lemma \ref{gaussish bound}, which is a generalization of an idea appearing at the bottom of page 21 in \cite{APT2019}. Since this lemma is rather technical, before giving a statement we would like to take a moment to provide some motivation. 

In our proofs of Theorems \ref{thm:RP 1} and \ref{thm:RP 2} we will encounter situations where for each $m \in\N$ we have a rescaled sum $S_m$ of $m$ i.i.d.~random variables whose distributions may depend on $m$. Given a sequence $(\beta_m)$ of reals tending to infinity, we would like the find the \emph{smallest} sequence $(\alpha_m)$ ensuring that $\P[ | S_m | > \alpha_m] \leq C/ \sqrt{\beta_m}$ for some constant $C\in(0,\infty)$ not depending on $m$. Lemma \ref{gaussish bound} states the best possible choice is to take $\alpha_m$ of the order $\sqrt{ \log ( \beta_m) }$.

In the proof of Lemma \ref{gaussish bound} we will use the notation 
\begin{align*}
\sqrt{a}_+ := \begin{cases}
\sqrt{a}, ~~ &\text{if $a \geq 0$,}\\
0, ~~ &\text{if $a < 0$},
\end{cases}
\end{align*}
as well as the following well known inequality for Gaussian integrals, 
%(see, e.g. \cite[Section 7.1]{Feller1})
which, for any $t>0$, provides the estimate
\begin{align} \label{tail bound}
\int_t^\infty \frac{1}{ \sqrt{ 2 \pi }} e^{ - u^2/2} \,\dint u \leq \frac{1}{ t} \frac{1}{ \sqrt{2 \pi }} e^{ - t^2/2}.
\end{align}

\begin{lemma} \label{gaussish bound}
Let $X^{(m)}$, $m\in\N$ be a sequence of centered random variables with $\E[ | X^{(m)}|^3 ] = \rho_m$ and $\E[ |X^{(m)}|^2 ] = \sigma_m^2$. Suppose $\sigma_{\mathsf{max}} \geq \sigma_m$ for every $m\in\N$, and that $\gamma_m$, $m\in\N$ is any sequence satisfying 
\begin{align} \label{cmax}
\frac{\rho_m}{\sigma_m^{3}} \leq \gamma_m \,.
\end{align}
Suppose for each $m\in\N$ we have a sum
\begin{align*}
S_m = \frac{1}{ \sqrt{m}} \left( X_1^{(m)} + \ldots + X_m^{(m)} \right),
\end{align*}
where $X_1^{(m)},\dots, X_m^{(m)}$ are independent copies of $X^{(m)}$. Suppose $\beta_m$, $m\in\N$ is a sequence of positive reals such that $\beta_m \leq m / \gamma_m^2$ for every $m\in\N$. Then, for any sequence $(\alpha_m)_{m \geq 1}$ of positive reals satisfying  $\alpha_m \geq \sigma_{\mathsf{max}} \sqrt{ \log (\beta_m) }_+$ for every $m\in\N$, we have
\begin{align} \label{sum bound}
\P \big[|S_m| > \alpha_m \big] \leq \frac{C^* }{ \sqrt{\beta_m }} \quad\text{ for every $m\in\N$,}
\end{align}
for $C^* =  2 \left( K_{\mathsf{BE}}   + 1 \right) $.
\end{lemma}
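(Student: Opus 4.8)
I would split the estimate on $\P[|S_m| > \alpha_m]$ into a ``Gaussian part'' and a ``Berry-Esseen correction'' by comparing $S_m$ with a centered Gaussian $G_m$ having the same variance as $S_m$, namely variance $\sigma_m^2$ (the $\tfrac1m$ in front of the sum of $m$ i.i.d.\ copies of $X^{(m)}$ exactly cancels the factor $m$, so $\var S_m = \sigma_m^2 \le \sigma_{\mathsf{max}}^2$). The triangle-type bound is simply
\begin{align*}
\P[|S_m| > \alpha_m] \le \P[|G_m| > \alpha_m] + 2\,\dint_{\mathsf{KS}}(S_m, G_m),
\end{align*}
since $\{|S_m|>\alpha_m\}$ is the union of the two half-lines $(-\infty,-\alpha_m)$ and $(\alpha_m,\infty)$, on each of which the probability differs from the corresponding Gaussian probability by at most $\dint_{\mathsf{KS}}(S_m,G_m)$.

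\textbf{The two terms.} For the Berry-Esseen correction I would apply Theorem~\ref{thm:berry esseen} in its i.i.d.\ form to $S_m$: it gives $\dint_{\mathsf{KS}}(S_m,G_m) \le K_{\mathsf{BE}}\,\rho_m/(\sigma_m^3 \sqrt m) = K_{\mathsf{BE}}\,(\rho_m/\sigma_m^3)\,m^{-1/2} \le K_{\mathsf{BE}}\,\gamma_m/\sqrt m$ using \eqref{cmax}, and then the hypothesis $\beta_m \le m/\gamma_m^2$ yields $\gamma_m/\sqrt m \le 1/\sqrt{\beta_m}$, so this term is at most $K_{\mathsf{BE}}/\sqrt{\beta_m}$. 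For the Gaussian tail, I would normalize: $G_m/\sigma_m$ is standard normal, and since $\alpha_m \ge \sigma_{\mathsf{max}}\sqrt{\log(\beta_m)}_+ \ge \sigma_m \sqrt{\log(\beta_m)}_+$, we get $\P[|G_m|>\alpha_m] \le \P[|G_m| > \sigma_m\sqrt{\log \beta_m}_+] = 2\P[\mathcal N(0,1) > \sqrt{\log\beta_m}_+]$. When $\beta_m \le 1$ this is at most $1 \le 1/\sqrt{\beta_m}$ trivially; when $\beta_m > 1$, apply the tail bound \eqref{tail bound} with $t = \sqrt{\log \beta_m}$ to get $2\cdot \tfrac{1}{\sqrt{\log\beta_m}}\tfrac{1}{\sqrt{2\pi}} e^{-(\log\beta_m)/2} = \tfrac{2}{\sqrt{2\pi\log\beta_m}}\cdot \tfrac{1}{\sqrt{\beta_m}}$, which is at most $1/\sqrt{\beta_m}$ once $\log\beta_m \ge 2/\pi$ and is in any case bounded by $2/\sqrt{\beta_m}$ for the range where $\log \beta_m$ is small but positive (here one may simply absorb the worst case into the constant). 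Adding the two contributions gives $\P[|S_m|>\alpha_m] \le 1/\sqrt{\beta_m} + 2K_{\mathsf{BE}}/\sqrt{\beta_m}$, and since $1 \le 2$ this is bounded by $(2 + 2K_{\mathsf{BE}})/\sqrt{\beta_m} = C^*/\sqrt{\beta_m}$, as claimed.

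\textbf{Remarks on potential pitfalls.} The only genuine subtlety is bookkeeping around the truncation $\sqrt{\cdot}_+$ and the degenerate regime $\beta_m \le 1$, where $\log\beta_m \le 0$ and the Gaussian tail estimate \eqref{tail bound} is inapplicable; there one falls back on the trivial bound $\P[|G_m|>\alpha_m]\le 1 \le 1/\sqrt{\beta_m}$, and likewise the Berry-Esseen term is at most $1$ so the conclusion holds with room to spare. A second minor point is that Theorem~\ref{thm:berry esseen} requires $X^{(m)}$ to be centered with finite third moment, which is exactly the standing hypothesis $\E[X^{(m)}]=0$, $\E[|X^{(m)}|^3]=\rho_m<\infty$; no independence issue arises since the $X_i^{(m)}$ are independent copies of $X^{(m)}$. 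I do not expect a substantial obstacle — the lemma is essentially the observation that ``a rescaled i.i.d.\ sum is sub-Gaussian up to a Berry-Esseen error,'' and the constant $C^* = 2(K_{\mathsf{BE}}+1)$ is precisely what the two additive pieces produce after using $\beta_m \le m/\gamma_m^2$ to convert the $m^{-1/2}$ rate into a $\beta_m^{-1/2}$ rate.
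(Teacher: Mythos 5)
Your proposal is correct and follows essentially the same route as the paper: compare $S_m$ to a Gaussian $G_m$ of variance $\sigma_m^2$ via $\P[|S_m|>\alpha_m]\le 2\,\dint_{\mathsf{KS}}(S_m,G_m)+\P[|G_m|>\alpha_m]$, control the first term by the i.i.d.\ Berry--Esseen bound together with \eqref{cmax} and $\beta_m\le m/\gamma_m^2$, and the second by the Gaussian tail estimate \eqref{tail bound}. The only cosmetic difference is that the paper disposes of the degenerate regime by noting the claim is trivial when $\beta_m\le 4$ (since $C^*\ge 2$) and then uses $\beta_m>4>e$ so that $\log\beta_m>1$, which is a slightly cleaner way of handling the case where $\log\beta_m$ is small or nonpositive than your case analysis, but the substance is identical.
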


\begin{proof}
Since $C^* \geq 2$, the bound \eqref{sum bound} holds trivially when $\beta_m \leq 4$, and we may assume without loss of generality that $\beta_m > 4$ for the remainder of the proof. Let $G_m$ be a centered Gaussian random variable with variance $\sigma_m^2$. Then using the definition of Kolmogorov-Smirnov distance and the symmetry of the Gaussian density, for any $\alpha_m >0 $ we have the inequality
\begin{align} \label{bound4}
\P \big[ |S_m| > \alpha_m \big] \leq 2\, \dint_{\mathsf{KS}} \left( S_m, G_m \right) + 2\, \P \big[ G_m > \alpha_m \big].
\end{align}
Indeed, it holds that
\begin{align*}
\Pro[|S_m|>\alpha_m] & = \Pro[|S_m|>\alpha_m] - \Pro[|G_m|>\alpha_m] + \Pro[|G_m|>\alpha_m] \cr
& = \Pro[|S_m|>\alpha_m] - \Pro[|G_m|>\alpha_m] +2\, \Pro[G_m>\alpha_m]
\end{align*}
and since 
\[
\Pro[|S_m|>\alpha_m] = 1-\Big(\Pro[S_m \leq \alpha_m] - \Pro[S_m \leq -\alpha_m] \Big)
\]
as well as 
\[
\Pro[|G_m|>\alpha_m] = 1-\Big(\Pro[G_m \leq \alpha_m] - \Pro[G_m \leq -\alpha_m] \Big),
\]
we obtain
\begin{align*}
\Pro[|S_m|>\alpha_m] - \Pro[|G_m|>\alpha_m] & =  \Pro[G_m \leq \alpha_m] - \Pro[S_m \leq \alpha_m] + \Pro[S_m \leq -\alpha_m] - \Pro[G_m \leq -\alpha_m] \cr
& \leq 2\, \dint_{\mathsf{KS}} \left( S_m, G_m \right).
\end{align*}
Now, on the one hand, using the Berry-Esseen theorem to obtain the first inequality below,  the estimate \eqref{cmax} to obtain the second, and the fact that $\beta_m \leq m/\gamma_m^2$ to obtain the third, we have
\begin{align} \label{bound2}
 \dint_{\mathsf{KS}} \left( S_m, G_m \right) \leq \frac{  K_{\mathsf{BE}} \rho_m/\sigma_m^{3}}{ \sqrt{m}  }  \leq \frac{ K_{\mathsf{BE}} \gamma_m }{ \sqrt{m}} \leq \frac{ K_{\mathsf{BE}}  }{ \sqrt{\beta_m } }.
\end{align}
On the other hand, $G_m/\sigma_m$ is a standard Gaussian random variable, and using the inequality \eqref{tail bound} we have
\begin{align} \label{bound1}
  \P \big[ G_m > \alpha_m \big] = \P \bigg[ \frac{G_m}{ \sigma_m} > \frac{ \alpha_m}{ \sigma_m } \bigg] \leq \frac{ 1}{ \alpha_m/ \sigma_m } \frac{1}{ \sqrt{2 \pi }} e^{ - \frac{1}{2} \left( \frac{ \alpha_m}{ \sigma_m } \right)^2 },
\end{align} 
and plugging $\alpha_m \geq \sigma_{\mathsf{max}} \sqrt{ \log ( \beta_m) }_{+} \geq  \sigma_m \sqrt{ \log ( \beta_m) }_{+} $ into \eqref{bound1} we have
\begin{align} \label{bound3}
  \P \big[ G_m > \alpha_m \big] \leq \frac{1}{ \sqrt{2 \pi }} \frac{1}{ \sqrt{ \beta_m} \sqrt{\log (\beta_m) }_{+} } \leq  \frac{1}{ \sqrt{\beta_m}}.
\end{align}
where in the final inequality above we used the fact that $\beta_m > 4 > \exp(1)$ and the crude bound $\frac{1}{ \sqrt{2 \pi}} \leq 1$. Combining \eqref{bound2} and \eqref{bound3} in \eqref{bound4}, and using the definition of $C^*$, the result follows. 
\end{proof}

% % % % % % % % % % % % % % % % % % % % % % % % % %
\subsubsection{Bounds for second order functions}
% % % % % % % % % % % % % % % % % % % % % % % % % % 

Our final tool provides us with a bound on the probability that a function with local second order growth takes on values bounded away from $0$.

\begin{lemma}  \label{lem:Psi bound}
Suppose that $v$ is a random vector in $\R^d$ and $\Psi: \mathbb{R}^d \to \mathbb{R}$ is a function with the property that there exist $\delta, M \in(0,\infty)$ such that
\begin{align} \label{property}
|\Psi(x) | \leq M || x||_2^2 \quad \text{ whenever }\quad ||x||_2 < \delta.
\end{align}
Then, for any $c \in(0,\infty)$, 
\begin{align*}
\P \big[ | \Psi(v) | > c  \big]  \leq \P \bigg[ || v||_2 >  \sqrt{ \frac{ c}{ M }}  \,\bigg]  + \P \big[ ||v||_2 \geq \delta \big].
\end{align*}
\end{lemma}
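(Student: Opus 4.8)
The plan is to split the event $\{|\Psi(v)| > c\}$ according to whether $v$ lies inside or outside the ball of radius $\delta$ where the second-order bound \eqref{property} is available. Concretely, I would write
\begin{align*}
\P\big[|\Psi(v)| > c\big] = \P\big[|\Psi(v)| > c,\ \|v\|_2 < \delta\big] + \P\big[|\Psi(v)| > c,\ \|v\|_2 \geq \delta\big].
\end{align*}
The second term is trivially bounded by $\P[\|v\|_2 \geq \delta]$, which is exactly the second term in the claimed bound, so no further work is needed there.

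For the first term, the key observation is that on the event $\{\|v\|_2 < \delta\}$ the hypothesis \eqref{property} gives $|\Psi(v)| \leq M\|v\|_2^2$. Hence on this event the inequality $|\Psi(v)| > c$ forces $M\|v\|_2^2 > c$, i.e. $\|v\|_2 > \sqrt{c/M}$. Therefore
\begin{align*}
\P\big[|\Psi(v)| > c,\ \|v\|_2 < \delta\big] \leq \P\big[\|v\|_2 > \sqrt{c/M}\big],
\end{align*}
which is the first term in the claimed bound. Adding the two estimates yields the result.

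There is really no main obstacle here: the lemma is a soft, essentially one-line union-bound argument, and the only thing to be a little careful about is keeping track of which event implies which so that the two pieces line up exactly with the two terms in the statement. One could even drop the intersection with $\{\|v\|_2 < \delta\}$ in the first probability after the bounding step (since $\P[\|v\|_2 > \sqrt{c/M},\ \|v\|_2 < \delta] \leq \P[\|v\|_2 > \sqrt{c/M}]$), which is what produces the clean form stated. So the proof is just: decompose over $\{\|v\|_2 < \delta\}$ versus $\{\|v\|_2 \geq \delta\}$, apply \eqref{property} on the former, and bound trivially on the latter.
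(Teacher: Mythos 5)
Your proposal is correct and follows exactly the same route as the paper's proof: decompose over the events $\{\|v\|_2 < \delta\}$ and $\{\|v\|_2 \geq \delta\}$, apply the bound \eqref{property} on the first event, and bound the second piece trivially. Nothing further is needed.
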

\begin{proof}
Using the property \eqref{property} to obtain the second inequality below, we have
\begin{align*}
\P \big[ | \Psi(v) | > c  \big]  &= \P \big[ | \Psi(v) | > c, ||v||_2 < \delta \big]  + \P \big[ | \Psi(v) | > c  , ||v||_2 \geq \delta \big] \\
&\leq \P \big[ M|| v||_2^2 > c, ||v||_2 < \delta \big]  + \P \big[ | \Psi(v) | > c  , ||v||_2 \geq \delta \big] \\
&\leq \P \big[ M|| v||^2_2 >c  \big]  + \P \big[ ||v||_2 \geq \delta \big].
\end{align*}
This establishes the result.
\end{proof}

% % % % % % % % % % % % % % % % % % % % % % % % % % % % % % % % % % % % % % % %
\section{Proof of Theorem \ref{thm:RP 1}}\label{sec:RP 1 proof}
% % % % % % % % % % % % % % % % % % % % % % % % % % % % % % % % % % % % % % % %

In this section we prove Theorem \ref{thm:RP 1}, which establishes Berry-Esseen bounds for the $\ell_2$-norm of the rescaled projection of a random vector with distribution $\mathbb{P}_{n,p,\mathbb{W}}$ onto a random $k$-dimensional subspace of $\R^n$ distributed according to $\nu_{n,k}$.  
\subsection{The set up} \label{sec:RP 1 setup}
Let $X_n$ be a random vector with law $\mathbb{P}_{n,p,\mathbb{W}}$, and let  $E_n$ be a random subspace of $\mathbb{R}^n$ with law $\nu_{n,k_n}$, $k_n\in\{1,\dots,n\}$. Then by Proposition \ref{projection rep} we have the identity in law
\begin{align*}
|| P_{E_n} X_n ||_2 \stackrel{\dint}{=} \frac{ \Big( \sum_{ i =1}^n Z_i^2 \Big)^{1/2} \Big( \sum_{ i =1}^{k_n} g_i^2 \Big)^{1/2} }{ \Big( \sum_{ i =1}^n |Z_i|^p + W \Big)^{1/p} \Big( \sum_{ i =1}^n g_i^2 \Big)^{1/2}  }\,,
\end{align*}
where $Z_i$ be a collection of independent $p$-Gaussian random variables, $W$ has law $\mathbb{W}$, and $g_i$ are independent standard Gaussian random variables. 

Shifting the sums to have expectation zero, we may write
\begin{align}
|| P_{E_n} X_n ||_2 &\stackrel{\dint}{=}  \frac{ \left( n M_p(2) + \sum_{ i =1}^n (Z_i^2  - M_p(2)) \right)^{1/2} \left( k_n +  \sum_{ i =1}^{k_n}  (g_i^2 - 1 )  \right)^{1/2} }{ \left( n + \sum_{ i =1}^n ( |Z_i|^p - 1)  + W \right)^{1/p} \left( n +  \sum_{ i =1}^n (g_i^2 - 1)  \right)^{1/2}  }, \nonumber \\
&= \frac{ \sqrt{k_n}  \sqrt{M_p(2)} }{ n^{1/p} } F\left( \frac{a_n}{n M_p(2) }, \frac{b_n}{k_n}, \frac{c_n}{n }, \frac{ W}{ n} , \frac{ d_n}{ n} \right), \label{exp11}
\end{align}
where $F(a,b,c,w,d) = \frac{ (1+a)^{1/2} (1 + b)^{1/2}  }{ ( 1 + c + w)^{1/p} (1 + d)^{1/2} }$, and 
\begin{align}
a_n = \sum_{i  =1}^n (Z_i^2 - M_p(2)), \qquad  b_n = \sum_{i  =1}^{k_n}  (g_i^2 - 1 ), \nonumber \\
c_n = \sum_{i  =1}^n (|Z_i|^p - 1), \qquad  d_n = \sum_{i  =1}^n (g_i^2 - 1) .  \label{abcd def}
\end{align}
The function $F: \mathbb{R}^5 \to \mathbb{R}$ is twice differentiable in a neighborhood around the origin in $\mathbb{R}^5$, and hence, by Taylor's theorem we may write
\begin{align} \label{F Psi}
F(a,b,c,w,d) = 1 + \frac{1}{2} ( a + b - d) - \frac{1}{p} (c + w ) + \Psi(a,b,c,w,d),
\end{align}
where $\Psi: \mathbb{R}^5 \to \mathbb{R}$ is a function such that there exist $M,\delta \in(0,\infty)$ satisfying
\begin{align} \label{Psi bound 2}
| \Psi(x) | \leq M ||x||_2^2 \quad \text{ whenever } \quad ||x||_2 \leq \delta. 
\end{align}
It follows from plugging \eqref{F Psi} into \eqref{exp11} that the focal random variable $Y_n$ may be decomposed as follows:
\begin{align}
Y_n &:= \frac{ n^{1/p}  }{ \sqrt{M_p(2)}} || P_{E_n} X_n||_2 - \sqrt{k_n} \nonumber \\
&\stackrel{\dint}{=} \xi_n  - \frac{ \sqrt{k_n} W }{p n} + \sqrt{k_n} \Psi \left( \frac{a_n}{n M_p(2) }, \frac{b_n}{k_n}, \frac{c_n}{n }, \frac{ W}{ n } , \frac{ d_n}{ n} \right), \label{exp 1}
\end{align}
where $\xi_n := \sqrt{k_n} \left( \frac{1}{ 2 n M_p(2)} a_n + \frac{1}{2 k_n} b_n - \frac{1}{p n} c_n - \frac{1}{ 2 n} d_n \right)$. 

Assuming that $\lambda_n := k_n/n$ converges to some $\lambda \in [0,1]$, we recall that the goal of this proof is to establish a bound for the Kolmogorov-Smirnov distance between $Y_n$ and a Gaussian random variable $G$ with variance $v(\lambda)$, where $v:[0,1] \to \mathbb{R}$ is given by
\begin{align} \label{v def}
v(s)  := s \sigma^2(p,2) + \frac{1}{2}(1-s).
\end{align}
 The main idea of the proof is that we may apply the Berry-Esseen theorem to show that $\xi_n$, which is a sum of many random variables, is close to a Gaussian random variable, and the remaining terms in the expansion \eqref{exp 1} for $Y_n$ are negligible.

To this end, using Lemma \ref{separating} to obtain the first inequality, and the triangle inequality \eqref{berry triangle}  for Kolmogorov-Smirnov distances to obtain the second, for any $\epsilon > 0$ we have
\begin{align}
\dint_{\mathsf{KS}}(Y_n, G) & \leq \dint_{\mathsf{KS}}(\xi_n, G) \nonumber  
 + \P \left[  \frac{ W }{p n}  > \frac{ \epsilon}{ 2 \sqrt{k_n}} \right] \cr
 &\qquad+  \P \left[ \left| \Psi \left( \frac{a_n}{n M_p(2) }, \frac{b_n}{k_n}, \frac{c_n}{n }, \frac{ W}{ n} , \frac{ d_n}{ n} \right) \right| > \frac{ \epsilon}{ 2 \sqrt{k_n}}  \right] 
  + \frac{ \epsilon}{ \sqrt{2 \pi v(\lambda) } } \nonumber  \\
& \leq \dint_{\mathsf{KS}}(\xi_n, G_n)  + \dint_{\mathsf{KS}}(G_n, G)  
+ \P \left[  \frac{ W }{p n}  >\frac{ \epsilon}{ 2 \sqrt{k_n}} \right] \cr 
&\qquad +  \P \left[ \left| \Psi \left( \frac{a_n}{n M_p(2) }, \frac{b_n}{k_n}, \frac{c_n}{n }, \frac{ W}{ n} , \frac{ d_n}{ n} \right) \right| > \frac{ \epsilon}{ 2 \sqrt{k_n}}  \right] + \frac{ \epsilon}{ \sqrt{2 \pi J_p } } \label{expansion},
\end{align}
where $G_n$ is a Gaussian random variable with the same variance as $Y_n$, and 
\begin{align} \label{J_p def}
 J_p & := \inf_{s \in [0,1]} v(s)  = \min \big\{ \sigma^2(p,2) , 1/2 \big\} 
\end{align}
is strictly positive. 

The remainder of the proof is structured as follows. First we use the Berry-Esseen theorem to find a bound for $\dint_{\mathsf{KS}}(\xi_n,G_n)$. Then we use the results of Section \ref{sec:KS} on the Kolmogorov-Smirnov distances between different Gaussian distributions to bound $\dint_{\mathsf{KS}}(G_n,G)$. After this we take a choice $\epsilon = \epsilon_n$ to establish a bound for the term involving $\Psi$. Finally, we take stock of these inequalities to give a proof of Theorem \ref{thm:RP 1}.

%%%%%%%%%%%%%%%%%%%%%
\subsection{A bound for $\dint_{\mathsf{KS}}(\xi_n,G_n)$}

%%%%%%%%%%%%%%%%%%%%%
We note that the constituent parts $a_n ,b_n,c_n, d_n$ of the random variable $\xi_n$ are clearly not independent of one another. However, $\xi_n$ may be written as a sum of $n + k_n + (n-k_n)$ independent random variables by writing
\begin{align*}
\xi_n = & \sum_{ i = 1}^n   \frac{ \sqrt{\lambda_n } \varphi_i }{ \sqrt{n}} +  \sum_{ i = 1}^{k_n }  \frac{1 - \lambda_n }{2 \sqrt{k_n}} \phi_i + \sum_{ i = k_n + 1}^n \left( - \frac{ \sqrt{ \lambda_n(1 - \lambda_n) } }{2 \sqrt{ n - k_n} } \right) \phi_i,
\end{align*}
where we recall $\lambda_n := k_n/n$, and define for $i\in\{1,\dots,n\}$ the random variables
\begin{align*}
\varphi_i :=   \frac{ |Z_i|^2 - M_p(2) }{ 2 M_p(2) } - \frac{ |Z_i|^p - 1}{ p  } \qquad\text{and}\qquad \phi_i := g_i^2 - 1  \,.
\end{align*}
Let $\varphi$ and $\phi$ be random variables with the same law as $\varphi_1$ and $g_1$ respectively. Then noting that the variance of $\varphi$ is given by $\sigma^2(p,2)$ (see the display right after Lemma \ref{p Gaussian moments}) and the variance of $\phi$ is simply $2$, we obtain that
\[
\Var[\xi_n] = \lambda_n\Var[\varphi]+ \Big(\frac{1-\lambda_n}{2}\Big)^2 \Var[\phi] + \frac{\lambda_n(1-\lambda_n)}{4}\Var[\phi] = \lambda_n\sigma^2(p,2) + \frac{1}{2}(1-\lambda_n) = v(\lambda_n),
\]
where $v(s)$ is given by \eqref{v def}. 

Now let $G_n$ be a centered Gaussian random variable with variance $v(\lambda_n)$. Then in the context of the Berry-Esseen Theorem (Theorem \ref{thm:berry esseen}), with $m = n + k_n + (n - k_n)$, we have
\begin{align*}
\max_{ 1\leq  i \leq m} \frac{\rho_i}{\sigma_i^2}  &= \max \left\{ \frac{ \E \left[  \left| \frac{ \sqrt{\lambda_n } \varphi }{ \sqrt{n}} \right|^3 \right] }{ \E \left[ \left|  \frac{ \sqrt{\lambda_n } \varphi }{ \sqrt{n}} \right|^2 \right] },  \frac{ \E \left[ \left| \frac{1 - \lambda_n }{2 \sqrt{k_n}} \phi  \right|^3 \right] }{ \E \left[ \left| \frac{1 - \lambda_n }{2 \sqrt{k_n}} \phi  \right|^2 \right] } ,  \frac{ \E \left[ \left|  \left( - \frac{ \sqrt{ \lambda_n(1 - \lambda_n) } }{2 \sqrt{ n - k_n} } \right) \phi  \right|^3 \right] }{ \E \left[ \left| \left( - \frac{ \sqrt{ \lambda_n(1 - \lambda_n) } }{2 \sqrt{ n - k_n} } \right) \phi  \right|^2 \right] } \right\} \leq C_p A_n.
\end{align*}
where
\begin{align*}
C_p  :=  \max \left\{ \frac{ \E[ | \varphi|^3 ] }{ \E[ \varphi^2] }, \frac{ \E[ | \phi|^3 ] }{ \E[ \phi^2] } \right\} \qquad \text{and} \qquad A_n := \max \left\{   \frac{\sqrt{\lambda_n}}{ \sqrt{n}}, \frac{ 1 - \lambda_n}{ 2 \sqrt{k_n } } , \frac{ \sqrt{ \lambda_n (1- \lambda_n) }}{ 2 \sqrt{n-k_n}}    \right\} .
\end{align*}
Using the fact that $\lambda_n \leq 1$ and $\lambda_n := k_n/n$, it follows that $A_n \leq \frac{1}{ \sqrt{k_n}}$. Consequently, 
\[
\max_{1 \leq i \leq m } \frac{\rho_i}{\sigma_i^2} \leq \frac{C_p}{\sqrt{k_n}},
\] 
and by applying the Berry-Esseen theorem to the sequence $\xi_n$, $n\in\N$ we have
\begin{align} \label{5a}
\dint_{\mathsf{KS}}( \xi_n, G_n ) \leq K_{\mathsf{BE}} \frac{ C_p}{\sqrt{v(\lambda_n)}  \sqrt{k_n}  } \leq K_{\mathsf{BE}} \frac{ C_p }{ \sqrt{J_p} \sqrt{k_n}} =: \frac{C_p'}{ \sqrt{k_n}},
\end{align}
where the second inequality above follows from using the definition \eqref{J_p def} of $J_p$.

\subsection{A bound for $\dint_{\mathsf{KS}}(G_n,G)$}
The random variables $G_n$ and $G$ are centered Gaussians with variances given by $v(\lambda_n)$ and $v(\lambda)$ respectively.  It follows from Proposition \ref{berry gauss lip} and the definition of $J_p$ in \eqref{J_p def} that
\begin{align} \label{5b}
\dint_{\mathsf{KS}}(G_n,G) \leq \frac{3}{8} \frac{ |v(\lambda_n) - v(\lambda) | }{ v(\lambda) } \leq \frac{3}{8} \frac{| \sigma^2(p,2) -\frac{1}{2} | | \lambda_n - \lambda|  }{ J_p } = C_p\, | \lambda_n - \lambda|,
\end{align}
where
\[
C_p := \frac{3}{8}\,\frac{|\sigma^2(p,2)-\frac{1}{2}|}{\min\{\sigma^2(p,2),\frac{1}{2}\}}\,.
\]
%for a suitable constant $C_p\in(0,\infty)$ depending on $p$ but independent of $\lambda_n, \lambda$, and $n$. 

\subsection{A bound for the $\Psi$ term} \label{sec:RP 1 Psi bound}
In this section we take a sequence $(\epsilon_n)_{n\in\N}$ decreasing down to $0$ in such a way that the probability
\begin{align*}
\P \left[ \left| \Psi \left( \frac{a_n}{n M_p(2) }, \frac{b_n}{k_n}, \frac{c_n}{n }, \frac{ W}{ n } , \frac{ d_n}{ n} \right) \right| >\frac{ \epsilon_n}{ 2 \sqrt{k_n}}  \right] 
\end{align*}
may be bounded by a function of $n$ decreasing down to $0$ as $n \to \infty$.

To this end, using Lemma \ref{lem:Psi bound} to obtain the first inequality below, and the triangle inequality to obtain the second, with $v_n := \left( \frac{a_n}{n M_p(2) }, \frac{b_n}{k_n}, \frac{c_n}{n }, \frac{ W}{ n} , \frac{ d_n}{ n} \right)  \in \mathbb{R}^5$ we have  
\begin{align}
& \P \left[ \left| \Psi \left( v_n\right) \right| > \frac{ \epsilon_n }{ 2 \sqrt{k_n} }  \right]  \leq  \P \left[ \left| \left| v_n \right|  \right|_2 \geq  \sqrt{ \frac{ \epsilon_n }{ 2M \sqrt{k_n} } }  \right] + 
\P \big[ \left| \left|   v_n \right|  \right|_2 \geq  \delta \big] \nonumber \\
&\leq Q_n \left( \frac{a_n}{ n M_p(2) } \right) +  Q_n \left( \frac{b_n}{k_n} \right) +   Q_n \left(  \frac{c_n}{n } \right) +  Q_n \left( \frac{ W}{ n } \right)  +  Q_n \left(  \frac{ d_n}{ n} \right),  \label{expansion 2}
\end{align} 
where for a random variable $X$,
\begin{align*}
Q_n(X) := \P \left[ |X| >   \sqrt{ \frac{ \epsilon_n }{ 10M \sqrt{k_n} } } \right ]+   \P \left[ |X| >  \frac{\delta}{\sqrt{5}} \right] .
\end{align*}
(We emphasize that $Q_n$ depends on $\epsilon_n$.) We now show that with a suitable choice of $\epsilon_n$, there is a constant $C\in(0,\infty)$ such that the terms involving $a_n, b_n, c_n, d_n$ in the final expression in \eqref{expansion 2} may be bounded above by $C/ \sqrt{k_n}$. 

First consider obtaining an upper bound for 
\begin{align*}
Q_n \left( \frac{a_n}{ n M_p(2) } \right) = \P \left[ \left| \frac{a_n}{ \sqrt{n}} \right| > c_p \sqrt{n} \right] + \P \left[ \left| \frac{a_n}{ \sqrt{n}} \right| >  c_p' \sqrt{ \frac{ n \epsilon_n }{ \sqrt{k_n} } }   \right]  
\end{align*}
where $c_p = \frac{ \delta M_p(2)}{ \sqrt{5}}$ and $c_p' = \frac{M_p(2)}{ \sqrt{10 M }}$.

Letting $S_n := \frac{ a_n }{ \sqrt{n}}$, $n\in\N$ we are in the set up of Lemma \ref{gaussish bound} with $m=n$, and with two different tail probabilities we would like to bound, namely
\begin{align*}
\alpha_n := c_p \sqrt{n} \qquad \text{and} \qquad \alpha'_n := c_p' \sqrt{ \frac{ n \epsilon_n }{ \sqrt{k_n} } }  .
\end{align*}
Since $a_n$ is a sum of $n$ random variables with distribution $Z^2 - M_p(2)$ (where $Z$ is $p$-Gaussian), in the setting of Lemma \ref{gaussish bound} we may take 
\begin{align*}
\gamma_n := \frac{ \E\big[|Z_1^2 - M_p(2)|^3 \big] }{ \E\big[|Z_1^2 - M_p(2)|^2 \big]} \qquad \text{for every $n\in\N$},
\end{align*} 
and $\sigma_{\mathsf{max},p} = \E[|Z_1^2 - M_p(2)|^2 ]$. Indeed, whenever a sequence $\beta_n$ and $\epsilon_n$ are chosen so that the inequalities
\begin{align} \label{pair ineq}
c_p \sqrt{n}  \geq \sigma_{\mathsf{max},p}  \sqrt{ \log(\beta_n)}_+  \qquad \text{and} \qquad  c_p' \sqrt{ \frac{ n \epsilon_n }{ \sqrt{k_n} } } \geq \sigma_{\mathsf{max},p}\sqrt{ \log(\beta_n)}_+
\end{align}
hold for every $n\in\N$, then by Lemma \ref{gaussish bound} we have 
\begin{align} \label{solo ineq}
Q_n\left( \frac{a_n}{ n M_p(2) } \right) \leq \frac{ 4 \left( K_{\mathsf{BE}} + 1 \right)}{ \sqrt{\beta_n}},
\end{align}
for every $n\in\N$.
Now consider setting $\epsilon_n = \Theta_p \log(k_n)/ \sqrt{k_n}$ and $\beta_n = \theta_p k_n$. Then if $\theta_p$ is sufficiently small and $\Theta_p$ is sufficiently large such that both the inequalities in \eqref{pair ineq} hold, then \eqref{solo ineq} holds, and there is a constant $C_p'\in(0,\infty)$ such that
\begin{align*}
Q_n\left( \frac{a_n}{ n M_p(2) } \right) \leq \frac{C_p'}{\sqrt{k_n}}
\end{align*}
for every $n\in\N$. 
We may work analogously with the terms $Q_n\left( \frac{c_n}{n} \right)$ and $Q_n \left( \frac{d_n}{ n } \right)$, which also involve tail bounds for sums of $n$ independent random variables. It follows that for sufficiently large constants $C_p''$ and $\Theta_p'$, by setting $\epsilon_n = \Theta_p' \frac{ \log(k_n)}{ \sqrt{k_n}}$ we ensure that 
\begin{align} \label{5a}
Q_n\left( \frac{a_n}{ n M_p(2) } \right) + Q_n\left( \frac{c_n}{n} \right) + Q_n \left( \frac{d_n}{ n } \right) \leq \frac{C_p''}{\sqrt{k_n}}
\end{align}
for every $n\in\N$. 

We now consider obtaining an upper bound for $Q_n \left( \frac{b_n}{ k_n} \right)$, which this time involves finding tail probabilities for a sum of $k_n$ random variables. We may write
\begin{align*}
Q_n \left( \frac{b_n}{ k_n} \right) = \P \left( \left| \frac{b_n}{ \sqrt{k_n}} \right| > c_p'' \sqrt{k_n} \right) + \P \left( \left| \frac{b_n}{ \sqrt{k_n}} \right| > c_p''' \sqrt{ \epsilon_n \sqrt{k_n}} \right) ,
\end{align*}
where $c_p'' = \frac{ \delta}{ \sqrt{5}}$ and $c_p''' = \frac{1}{ \sqrt{10 M }}$. Again we are in the context of Lemma \ref{gaussish bound}, but this time with $m = k_n$ and 
\begin{align*}
\alpha_m'' = c_p'' \sqrt{k_n} \qquad \text{and} \qquad \alpha_m''' = c_p''' \sqrt{ \epsilon_n \sqrt{k_n}}.
\end{align*} 
(We will continue to index all variables here with a subscript $n$ as opposed to $m$.) 
Now $b_n$ is a sum of $k_n$ random variables distributed like $\phi = g^2  -1$, where $g$ is a standard Gaussian. This time take $\gamma_n'  := \E[ |\phi|^3]/\E[ |\phi|^2]$ for every $n\in\N$, as well as $\sigma'_{\mathsf{max},p} := \E[ |\phi|^2]$. Then by Lemma \ref{gaussish bound}, whenever $\beta_n \leq n/ \gamma_n^2$ and $\epsilon_n$ are chosen so that the inequalities
\begin{align} \label{pair ineq'}
 c_p'' \sqrt{k_n} \geq \sigma'_{\mathsf{max},p}  \sqrt{ \log(\beta_n)}_+  \qquad \text{and} \qquad c_p''' \sqrt{ \epsilon_n \sqrt{k_n}  } \geq \sigma'_{\mathsf{max},p}  \sqrt{ \log(\beta_n)}_+ 
\end{align}
hold for every $n\in\N$, we have
\begin{align} \label{solo ineq'}
Q_n \left( \frac{b_n}{ k_n} \right)  \leq \frac{ 4 \left( K_{\mathsf{BE}} +  1 \right) }{  \sqrt{\beta_n  } } .
\end{align}
In particular, setting $\beta_n = \theta_p' k_n$ and $\epsilon_n := \Theta_p' \log(k_n)/ \sqrt{k_n}$, for sufficiently small $\theta_p'$ and sufficiently large $\Theta_p'$ the inequalities \eqref{pair ineq'} hold for every $n\in\N$. It follows that by \eqref{solo ineq'} there is a constant $C_p'''\in(0,\infty)$ such that
\begin{align} \label{5b}
Q_n \left( \frac{b_n}{ k_n} \right)  \leq  \frac{ C_p''}{ \sqrt{k_n}}.
\end{align} 
In particular, by setting $\epsilon_n := \Theta_p^* \log(k_n)/ \sqrt{k_n}$ with $\Theta_p^* := \max \{ \Theta_p , \Theta_p' \},$ both \eqref{5a} and \eqref{5b} hold for every $n\in\N$, and hence there is a constants $C_p^*\in (0,\infty)$ such that 
\begin{align} \label{penult}
Q_n \left( \frac{a_n}{ n M_p(2) } \right) +  Q_n \left( \frac{b_n}{k_n} \right) +   Q_n \left(  \frac{c_n}{n } \right)  +  Q_n \left(  \frac{ d_n}{ n} \right) \leq \frac{C_p^*}{\sqrt{k_n}}.
\end{align} 
In particular, by plugging \eqref{penult} into \eqref{expansion 2}, there are constants $\Theta_p^*, C_p^*\in(0,\infty)$ depending only on $p$ such that with the choice $\epsilon_n := \Theta_p^* \frac{ \log(k_n)}{ \sqrt{ k_n } }$ we have
\begin{align} \label{final boy}
\P \left[ \left| \Psi \left( \frac{a_n}{n M_p(2) }, \frac{b_n}{k_n}, \frac{c_n}{n }, \frac{ W}{ n } , \frac{ d_n}{ n} \right) \right| >\frac{ \epsilon_n}{ 2 \sqrt{k_n}}  \right]  & \leq \frac{C_p^*}{ \sqrt{k_n}} + \P \left[ W > n \sqrt{\frac{ \epsilon_n}{ 10 M \sqrt{k_n}}}  \right] \cr 
& \quad+ \P \left[ W > \frac{\delta}{\sqrt{5}}\, n \right].
\end{align}

 % % % % % % % % % % % % % % % % % % % % %
\subsection{Wrapping things together}
% % % % % % % % % % % % % % % % % % % % % %

Recall our original bound \eqref{expansion}. We obtained the bounds \eqref{5a} and \eqref{5b} for the $\epsilon_n$-independent terms. Then in Section \ref{sec:RP 1 Psi bound} we showed there are constants $C_p^*, \Theta_p^*\in(0,\infty)$ such that if $\epsilon_n := \Theta^*_p \frac{ \log(k_n) }{ \sqrt{k_n}}$, then \eqref{final boy} holds. Consider the collection of terms involving $W$ that occur in \eqref{expansion} and \eqref{final boy}. Noting that by definition $W$ is non-negative, with  $\epsilon_n := \Theta^*_p \frac{ \log(k_n) }{ \sqrt{k_n}}$  it follows that there is a constant $c_p\in(0,\infty)$ such that the $W$ terms may be bounded by 
\begin{align} \label{W bound}
 & \P \left[ W >  n \sqrt{ \frac{ \epsilon_n }{ 10M \sqrt{k_n} } } \right ]+   \P \left[ W >  \frac{\delta}{\sqrt{5}} n \right]  +  \P \left[  \frac{ W }{p n}  >\frac{ \epsilon_n }{ 2 \sqrt{k_n}} \right]  \cr
 & \qquad \leq 3\, \P \left[ W > c_p n \frac{ \log(k_n)}{k_n } \right] \,.
\end{align}
Plugging \eqref{5a}, \eqref{5b}, \eqref{final boy} into \eqref{expansion}, and using $\epsilon_n := \Theta_p^* \log(k_n) / \sqrt{k_n}$, and \eqref{W bound}, it follows that there exist constants $c_p,C_p\in(0,\infty)$ depending on $p$ but independent of $\lambda_n, \lambda$, and $n$ such that
\begin{align*}
\dint_{\mathsf{KS}} (Y_n, G) \leq C_p \max  \left\{ \frac{\log (k_n)}{ \sqrt{k_n} }, | \lambda_n - \lambda| \right\} + 3 \, \P \left( W > c_p n \frac{ \log(k_n)}{k_n } \right),
\end{align*}
which completes the proof of Theorem \ref{thm:RP 1}.

% % % % % % % % % % % % % % % % % % % % % % % % % % % % % % % % % % %
\section{Proof of Theorem \ref{thm:RP 2}}  \label{sec:RP 2 proof}
% % % % % % % % % % % % % % % % % % % % % % % % % % % % % % % % % % %
In this section we prove Theorem \ref{thm:RP 2}, which gives Berry-Esseen bounds for the rescaled $\ell_2$-norm of the orthogonal projection of a random vector with distribution $\mathbb{P}_{n,p,\mathbb{W}}$ onto a $\mu_{n,\lambda_n}$-distributed random subspace of $\mathbb{R}^n$. In fact, in this setting $ n \lambda_n$ has the significance of being the average dimension of a subspace with distribution $\mu_{n ,\lambda_n}$.

\subsection{Setting up the proof}
Let $X_n$ be a random vector with law $\mathbb{P}_{n,p,\mathbb{W}}$, and let  $E_n$ be a random subspace of $\mathbb{R}^n$ with law $\mu_{n,\lambda_n}$. Then by Lemma \ref{projection rep random} we have the identity in law
\begin{align*}
|| P_{E_n} X_n ||_2 \stackrel{\dint}{=} \frac{ \Big( \sum_{ i =1}^n Z_i^2 \Big)^{1/2} \Big( \sum_{ i =1}^{n} I^{(\lambda_n)}_{i}\, g_i^2 \Big)^{1/2} }{ \Big( \sum_{ i =1}^n |Z_i|^p + W \Big)^{1/p} \Big( \sum_{ i =1}^n g_i^2 \Big)^{1/2}  }\,,
\end{align*}
where $Z_i$ are $p$-Gaussian random variables, $W$ has distribution $\mathbb{W}$, $g_i$ are standard Gaussian random variables, and $I^{(\lambda_n)}_{i}$ are Bernoulli random variables with parameter $\lambda_n$. 

Recentering the random variables so the sums have zero expectation, we may write
\begin{align*}
|| P_{E_n} X_n ||_2 & \stackrel{\dint}{=} \frac{ \left( n M_p(2) +  \sum_{ i =1}^n (Z_i^2 - M_p(2) ) \right)^{1/2} \left( n  \lambda_n +  \sum_{ i =1}^{n} (  I^{(\lambda_n)}_{i}\, g_i^2 - \lambda_n)  \right)^{1/2} }{ \left(  n  + \sum_{ i =1}^n ( |Z_i|^p -1) + W \right)^{1/p} \left( n + \sum_{ i =1}^n (g_i^2 - 1) \right)^{1/2}  }\\
&= n^{1/2 - 1/p} \sqrt{ \lambda_n M_p(2) }  F \left( \frac{a_n}{ n M_p(2) }, \frac{b'_n}{ n \lambda_n}, \frac{c_n}{ n} , \frac{W}{n} , \frac{ d_n}{ n}  \right),
\end{align*}
where $F$ and $a_n,c_n,d_n$ are as introduced in Section \ref{sec:RP 1 setup}, and 
\begin{align*}
b'_n := \sum_{ i =1}^n \left( I^{(\lambda_n)}_{i} g_i^2  - \lambda_n \right).
\end{align*}
With $\Psi$ as in Section \ref{sec:RP 1 setup}, it follows that the random variable $Y_n := \frac{n^{1/p}}{ \sqrt{M_p(2) }} || P_{E_n} X_n||_2 - \sqrt{\lambda_n n}$ may be written
\begin{align*}
Y_n := \xi_n - \frac{ \sqrt{\lambda_n } W}{ p \sqrt{n} } + \sqrt{ \lambda_n n } \Psi \left( \frac{a_n}{ n M_p(2) }, \frac{b'_n}{ n \lambda_n}, \frac{c_n}{ n} , \frac{w}{n} , \frac{ d_n}{ n}  \right),
\end{align*}
where $\xi_n = \sqrt{\lambda_n n} \left( \frac{a_n}{ 2 n M_p(2)}  +  \frac{b'_n}{ 2 n \lambda_n} - \frac{c_n}{  p n}  -   \frac{ d_n}{2  n}  \right)$. The goal of this section is to obtain an upper bound on the Kolmogorov-Smirnov distance between $Y_n$ and a Gaussian random variable $G$ with variance $w(\lambda)$, where
\begin{align} \label{w def}
w(s) := s \sigma^2 (p,2) + \frac{3}{4} (1-s), \qquad s \in [0,1].
\end{align} 
Again using Lemma \ref{separating} to obtain the first inequality below, and the triangle inequality \eqref{berry triangle}  for Kolmogorov-Smirnov distances to obtain the second, for any $\epsilon > 0$, we have
\begin{align}
\dint_{\mathsf{KS}}(Y_n, G) & \leq \dint_{\mathsf{KS}}(\xi_n, G) \nonumber  
 + \P \left[  \frac{ \sqrt{\lambda_n } W}{ p \sqrt{n} }  > \frac{ \epsilon}{ 2 } \right] +  \P \left[ \left| \Psi \left( \frac{a_n}{n M_p(2) },  \frac{b'_n}{ n \lambda_n} , \frac{c_n}{n }, \frac{ W}{ n} , \frac{ d_n}{ n} \right) \right| > \frac{ \epsilon}{ 2 \sqrt{n \lambda_n} }  \right] \cr
 &\qquad+ \frac{ \epsilon}{ \sqrt{2 \pi w(\lambda) } } \nonumber  \\
& \leq \dint_{\mathsf{KS}}(\xi_n, G_n)  + \dint_{\mathsf{KS}}(G_n, G) 
+  \P \left[  \frac{ \sqrt{\lambda_n } W}{ p \sqrt{n} }  > \frac{ \epsilon}{ 2 } \right] \cr 
& \qquad +  \P \left[ \left| \Psi \left( \frac{a_n}{n M_p(2) },  \frac{b'_n}{ n \lambda_n} , \frac{c_n}{n }, \frac{ W}{ n} , \frac{ d_n}{ n} \right) \right| > \frac{ \epsilon}{ 2 \sqrt{\lambda_n n }}  \right] + \frac{ \epsilon}{ \sqrt{2 \pi  J_p } } \label{60}\,,
\end{align}
where $G_n$ is a centered Gaussian random variable with the same variance as $\xi_n$, and 
\begin{align*}
 J_p := \inf_{s \in [0,1]} w(s) = \min \{ \sigma^2 (p,2), 3/4\}
\end{align*}
is positive. 

The remainder of the proof is thus split into four parts. First we use the Berry-Esseen theorem to obtain an upper bound for $\dint_{\mathsf{KS}}(\xi_n,G_n)$. Then we use Proposition \ref{berry gauss lip} to give an upper bound on $\dint_{\mathsf{KS}}(G_n,G)$. Then finally we take a choice of $\epsilon = \epsilon_n$ to give an upper bound for the term involving $\Psi$. In the final part we collect our bounds together to prove the result.   

\subsection{A bound for $\dint_{\mathsf{KS}}(\xi_n,G_n)$}
A calculation establishes that $\xi_n$ may be written as a sum $\xi_n = \frac{1}{ \sqrt{n}} \sum_{ i = 1}^n  \varphi_i^{(\lambda_n)}$ of independent and identically distributed centered random variables distributed like $\varphi^{(\lambda_n)}$, where for each $s \in (0,1]$, $\varphi^{(s)}$ is the centered random variable
\begin{align*}
\varphi^{(s)} :=  \sqrt{s} \left( \frac{ Z^2 - M_p(2) }{ 2 M_p(2) } + \frac{ I^{(s)} g^2 - s  }{ 2 s } - \frac{ |Z|^p - 1}{ p} - \frac{ g^2 - 1}{ 2}  \right)
\end{align*}
where $g$ is a standard Gaussian, $Z$ is $p$-Gaussian, and $I^{(s)}$ is Bernoulli distributed with $\P[I^{(s)} = 1 ] = s = 1 - \P [I^{(s)}= 0 ]$. 

We now take a moment to investigate the behavior of the moments of $\varphi^{(s)}$ as $s$ varies in $(0,1]$, with particular care for the case $s \to 0$. Considering first the second moment, a calculation shows that 
$\E[ |\varphi^{(s)} |^2 ] = w(s) $,
where $w(s)$ is given in \eqref{w def}.  As for the third moment, we may write 
\[
\varphi^{(s)} = \sqrt{s} \mathcal{W}_p + \frac{1}{2\sqrt{s}}I^{(s)} g^2,
\] 
where $\mathcal{W}_p$ is a random variable independent of $I^{(s)}$ and whose distribution is independent of $s$. Using the definition of $I^{(s)}$ and the independence to obtain the second equality below, we have
\begin{align*}
\E \big[ |\varphi^{(s)}|^3 \big] &= \frac{1}{s^{3/2}}\, \E \left[ \left|\frac{I^{(s)} g^2}{2} + s \mathcal{W}_p \right|^3 \right]\\
&= \frac{1}{\sqrt{s}}\, \E \left[ \left| \frac{g^2}{2} + s \mathcal{W}_p \right|^3 \right] + \frac{1}{s^{3/2}}\, (1-s)\, \E\big[ | s \mathcal{W}_p|^3 \big] \\
&= \frac{1}{\sqrt{s}}\, f_p(s)
\end{align*} 
for a bounded function $f_p:[0,1] \to [0,\infty)$. Since $w(s) := \E[ | \varphi^{(s)}|^2]$ is bounded below on $[0,1]$ by $J_p > 0$, and $\sqrt{s} \E [ |\varphi^{(s)}|^3 ] = f_p(s)$ is bounded above on $[0,1]$, there is a constant $C_p\in(0,\infty)$ such that
\begin{align*}
C_p  \geq \sup_{ s \in [0,1]}  \sqrt{s}\, \frac{\E [ |\varphi^{(s)}|^3 ]  }{\E[ |\varphi^{(s)} |^2 ]   } .
\end{align*}
We are now ready to use the Berry-Esseen theorem. Recall that $\xi_n := \frac{1}{ \sqrt{n}} \sum_{ i = 1}^n \varphi_i^{(\lambda_n)}$ is a rescaled sum of independent and identically distributed random variables with law $\varphi^{(\lambda_n)}$. If $G_n$ is a Gaussian random variable with variance $w(\lambda_n)$, then using the the Berry-Esseen theorem to obtain the first inequality below, and the definition of $C_p$ to obtain the second, we obtain
\begin{align} \label{6a}
\dint_{\mathsf{KS}} (\xi_n , G_n ) \leq K_{\mathsf{BE}} \frac{\E [ |\varphi^{(\lambda_n)}|^3 ]  }{\E[ |\varphi^{(\lambda_n)} |^2 ]   } \frac{1}{ \sqrt{n}} \leq K_{\mathsf{BE}} \frac{ C_p }{ \sqrt{ \lambda_n n} } \leq \frac{C_p'}{ \sqrt{\lambda_n n }} ,
\end{align}
where $C_p' := K_{\mathsf{BE}}\, C_p\in(0,\infty)$ depends only on $p$.

\subsection{A bound for $\dint_{\mathsf{KS}}{(G_n,G)}$}
The random variables $G_n$ and $G$ are centered Gaussians with variances $w(\lambda_n)$ and $w(\lambda)$ respectively. Since $w$ is Lipschitz on $[0,1]$, and bounded below on $[0,1]$ by a positive constant, it follows from Proposition \ref{berry gauss lip} that there is a constant $C_p\in(0,\infty)$ depending on $p$ but not on $n,\lambda_n,\lambda$ such that 
\begin{align} \label{6b}
\dint_{\mathsf{KS}}( G_n, G) \leq \frac{3}{8} \frac{ | w(\lambda_n) - w(\lambda) | }{ w(\lambda) } \leq C_p | \lambda_n - \lambda|.
\end{align}

\subsection{Bounding the $\epsilon$-dependent terms}
Let $(\epsilon_n)_{n \geq 1}$ be a sequence of positive reals decreasing down to $0$. By
Lemma \ref{lem:Psi bound} and the triangle inequality, with $v_n :=  \left( \frac{a_n}{n M_p(2) },  \frac{b'_n}{ n \lambda_n} , \frac{c_n}{n }, \frac{ W}{ n} , \frac{ d_n}{ n} \right) $ we have
\begin{align}
& \P \left[ \left| \Psi \left( v_n\right) \right| > \frac{ \epsilon_n }{ 2 \sqrt{\lambda_n n } }  \right]  \leq  \P \left[ \left| \left| v_n \right|  \right|_2 \geq  \sqrt{ \frac{ \epsilon_n  }{ 2M \sqrt{\lambda_n n } } }  \right] + 
\P \big[ \left| \left|   v_n \right|  \right|_2 \geq  \delta \big] \nonumber \\
&\leq Q_n \left( \frac{a_n}{ n M_p(2) } \right) +  Q_n \left( \frac{b'_n}{n \lambda_n} \right) +   Q_n \left(  \frac{c_n}{n } \right) +  Q_n \left( \frac{ W}{ n } \right)  +  Q_n \left(  \frac{ d_n}{ n} \right),  \label{apple}
\end{align} 
where for a random variable $X$,
\begin{align*}
Q_n(X) := \P \left[ |X| > \frac{\delta}{\sqrt{5}} \right] + \P \left[ | X | >   \sqrt{ \frac{ \epsilon_n }{ 10 M \sqrt{\lambda_n n } } }   \right]. 
\end{align*}
Considering first the terms involving $a_n, b_n$, and $d_n$, we may use the bounds developed in Section 3.4. In particular there are sufficiently large constants $C_p\in(0,\infty)$ and $\Theta_p\in(0,\infty)$, such that by setting $\epsilon_n = \Theta_p \frac{ \log(k_n)}{ \sqrt{k_n}}$, we ensure that 
\begin{align} \label{outcome 3}
Q_n\left( \frac{a_n}{ n M_p(2) } \right) + Q_n\left( \frac{c_n}{n} \right) + Q_n \left( \frac{d_n}{ n } \right) \leq \frac{C_p}{\sqrt{\lambda_n n} }
\end{align}
for every $n\in\N$. 

Now we look to obtain a bound for $Q_n \left( \frac{b'_n}{n \lambda_n} \right)$, which may be written
\begin{align*}
Q_n \left( \frac{b'_n}{n \lambda_n} \right) = \P \left( \left| \frac{b_n'}{ \sqrt{\lambda_n n } } \right| > c_p \sqrt{ n \lambda_n } \right) + \P \left( \left| \frac{b_n'}{ \sqrt{\lambda_n n } } \right| > c'_p \sqrt{ \epsilon_n \sqrt{n \lambda_n } } \right),
\end{align*}
where $c_p := \delta/ \sqrt{5}$ and $c_p' := \frac{1}{ \sqrt{10 M }}$. Recalling that $b'_n := \sum_{ i =1}^n \left( I^{(\lambda_n)}_{i} g_i^2  - \lambda_n \right)$ is a sum of independent and identically distributed random variables, we see that we are in the setting of Lemma \ref{gaussish bound}, with tails
\begin{align*}
\alpha_n :=  c_p  \sqrt{ n \lambda_n } \qquad \text{and} \qquad \alpha_n' :=  c'_p \sqrt{ \epsilon_n \sqrt{n \lambda_n } } .
\end{align*}
In order to use Lemma \ref{gaussish bound}, first we need to take a moment to study the moments of the random variables in $b_n'$. Let $I^{(s)}$ have the Bernoulli distribution with parameter $s$, $g$ be a standard Gaussian random variable, and define the random variable $\phi^{(s)} := \frac{1}{ \sqrt{s}} \left( I^{(s)} g^2 - s \right)$.
Then
\begin{align} \label{S_n def}
S_n := \frac{ b_n'}{ \sqrt{ \lambda_n n} } = \frac{1}{ \sqrt{n}} \sum_{ i = 1}^n \phi_i^{(\lambda_n)},
\end{align}
where $\phi_i^{(\lambda_n)}$ are independent and identically distributed like $\phi^{(s)}$. Using the fact that $\E[g^2] = 1,   \E[g^4 ] = 3, \E[g^6] = 15$, it is straightforward to check that $\phi^{(s)}$ is centered and has absolute second and third moments given by
\begin{align*}
\E\big[ | \phi_s |^2 \big] = (3-s) \qquad\text{and}\qquad  \E\big[ |\phi_s|^3 \big] =  s^{ - 3/2} \left( 15s - 9s^2 + 4s^3 - 2s^4 \right),
\end{align*}
the latter of which is bounded above by $19/\sqrt{s}$ for $s \in [0,1]$.
Using the fact that $\E[ |\phi_s|^2 ]\geq 2$ on $[0,1]$, we have the bound
\begin{align*}
\sup_{ s \in [0,1] } \sqrt{s}\, \frac{ \E\big[ |\phi_s|^3 \big]  }{ \E\big[ | \phi_s |^2 \big] ^{3/2} } \leq \frac{19}{ 2^{3/2}}.
\end{align*}
It follows that with the sum $S_n$  in \eqref{S_n def}, we are in the set up of Lemma \ref{gaussish bound} with
\begin{align*}
\gamma_n =  \frac{19}{2^{3/2}}  \frac{1}{\sqrt{\lambda_n}},
\end{align*} 
and $\sigma_{\mathsf{max}} = 3 \geq  \E[ |\phi_{\lambda_n}|^2 ] $ for all $\lambda_n$ and $n\in\N$. 

Now let $\beta_n$ and $\epsilon_n$ be sequences such that $n/\gamma_n^2 \geq \beta_n$ for every $n\in\N$, and additionally
\begin{align} \label{ineq orange}
c_p  \sqrt{ n \lambda_n }  \geq \sigma_{\mathsf{max}}  \sqrt{ \log ( \beta_n)}_+ \qquad \text{and} \qquad c'_p \sqrt{ \epsilon_n \sqrt{n \lambda_n } }  \geq  \sigma_{\mathsf{max}}  \sqrt{ \log ( \beta_n)}_+
\end{align}
for every $n\in\N$. Then by Lemma \ref{gaussish bound} we have
\begin{align*}
Q_n \left( \frac{b'_n}{ n \lambda_n} \right) \leq \frac{ 4 ( K_{\mathsf{BE}} + 1) }{ \sqrt{\beta_n}}.
\end{align*} 
In particular, consider setting $\beta_n = \theta'_p \lambda_n n $ and $\epsilon_n = \Theta'_p \frac{ \log(\lambda_n n )}{ \sqrt{n \lambda_n }}$. Then if $\theta'_p$ is sufficiently small and $\Theta'_p$ is sufficiently large such that $n/\gamma_n^2 \geq \beta_n$ and the inequalities \eqref{ineq orange} hold for every $n\in\N$, then for a sufficiently large constant $C'_p\in(0,\infty)$ we have
\begin{align} \label{outcome 4}
Q_n \left( \frac{b'_n}{n \lambda_n} \right)  \leq \frac{C_p'}{\sqrt{n \lambda_n \theta}}.
\end{align}
In particular, setting $\Theta^*_p := \max \{ \Theta_p , \Theta_p' \}$, both \eqref{outcome 3} and \eqref{outcome 4} hold, and there is a constant $C_p^*\in(0,\infty)$ such that
\begin{align} \label{orange}
Q_n\left( \frac{a_n}{ n M_p(2) } \right) + Q_n\left( \frac{c_n}{n} \right) + Q_n \left( \frac{d_n}{ n } \right) + Q_n \left( \frac{b'_n}{n \lambda_n} \right) \leq \frac{C^*_p}{\sqrt{\lambda_n n}}.
\end{align}
With $ \epsilon_n :=\Theta_p^* \frac{ \log( \lambda_n n) }{ \sqrt{ n \lambda_n } }$, by plugging \eqref{orange} into \eqref{apple} we obtain
\begin{align} \label{6c}
 &\P \left[ \left| \Psi  \left( \frac{a_n}{n M_p(2) },  \frac{b'_n}{ n \lambda_n} , \frac{c_n}{n }, \frac{ W}{ n} , \frac{ d_n}{ n} \right) \right| > \frac{ \epsilon_n }{ 2 \sqrt{\lambda_n n } }  \right]\\
&  \leq \frac{C_p^*}{\sqrt{ n \lambda_n }} + \P \left[ W > \frac{n  \delta}{\sqrt{5}}  \right] + \P \left[ W >n   \sqrt{ \frac{ \epsilon_n }{ 10M \sqrt{\lambda_n n } } }   \right]\,.
\end{align}

% % % % % % % % % % % % % % % % % % % % %
\subsection{Wrapping things together}
% % % % % % % % % % % % % % % % % % % % %
Considering the original bound \eqref{60}, and the bounds for the respective parts \eqref{6a}, \eqref{6b}, and \eqref{6c}. With the choice $\epsilon_n = \Theta^*_p \frac{ \log(n) }{ \sqrt{n}}$, there is some constant $c_p\in(0,\infty)$ such that the terms involving tail  probabilities for $W$ in \eqref{60} and \eqref{6c} can be bounded by writing
\begin{align*}
\P \left[  \frac{ \sqrt{\lambda_n } W}{ p \sqrt{n} }  > \frac{ \epsilon_n}{ 2 } \right] +  \P \left[ W > \frac{n  \delta}{\sqrt{5}}  \right] + \P \left[ W >n   \sqrt{ \frac{ \epsilon_n }{ 10M \sqrt{\lambda_n n } } }   \right] \leq 3\, \P \left[ W > c_p \frac{1}{ \lambda_n } \log(n) \right],
\end{align*}
where we used the fact that $W$ is non-negative. 

In particular, combining \eqref{6a}, \eqref{6b}, and \eqref{6c} in \eqref{60}, it follows that there exist constants $c_p,C_p\in(0,\infty)$ depending only on $p$ such that 
\begin{align*}
\dint_{\mathsf{KS}}( Y_n  , G) \leq C_p  \max \left\{ \frac{ \log(n) }{ \sqrt{ \lambda_n n } } , | \lambda_n - \lambda| \right\} + 3 \,\P \left[W >  c_p \frac{1}{ \lambda_n } \log(n) \right] ,
\end{align*}
completing the proof of Theorem \ref{thm:RP 2}.

% % % % % % % % % % % % % % % % % % % % % % % % % % % % % % % % % % % % % % % %
\section{Proof of Theorem \ref{thm:q norm} }\label{sec:q norm proof}
% % % % % % % % % % % % % % % % % % % % % % % % % % % % % % % % % % % % % % % %
In this section we prove Theorem \ref{thm:q norm}, which establishes Berry-Esseen type bounds for the $q$-norm of a vector-valued random variable with law $\mathbb{P}_{n,p,\mathbb{W}}$.
\subsection{Setting up the proof}
 Let $X_n$ have law $\mathbb{P}_{n,p,\mathbb{W}}$. Then $X_n$ has a representation
\begin{align*}
X_n \stackrel{\dint}{=} \frac{ Z }{ \left(  \sum_{i= 1}^n |Z_i|^p + W \right)^{1/p} } ,
\end{align*}
where $Z_1,\ldots,Z_n$ are independent and identically distributed $p$-Gaussian random variables, and $W$ is independent of the $Z_i$ with distribution $\mathbb{W}$. In particular, the $q$-norm of $X_n$ may be written
\begin{align}
||X_n||_q &\stackrel{\dint}{=}  \frac{ \left( \sum_{i=1}^n |Z_i|^q \right)^{1/q}  }{ \left(  \sum_{i= 1}^n |Z_i|^p + W \right)^{1/p} }  \nonumber  \\
 & = \frac{ \left(  nM_p(q) + \sum_{i=1}^n (|Z_i|^q-M_p(q)) \right)^{1/q}  }{ \left(  n +  \sum_{i= 1}^n (|Z_i|^p - 1)  + W \right)^{1/p} }   \nonumber  \\
&= \frac{ (nM_p(q))^{1/q} }{ n^{1/p} } F \left( \frac{ \xi^1_n}{ \sqrt{n}}, \frac{ \xi^2_n }{\sqrt{n}}, \frac{ W}{ n } \right), \label{X rep}
\end{align}
where $F(a,b,c) = \frac{ (1+a)^{1/q} }{ (1 + b + c)^{ 1/p} }$, and $\xi^1_n$ and $\xi^2_n$ are given by 
\begin{align*}
\xi_n^1 = \frac{1}{ \sqrt{n}} \sum_{i = 1 }^n \frac{ |Z_i|^q - M_p(q)}{ M_p(q)}\qquad\text{and}\qquad \xi_n^2 = \frac{1}{ \sqrt{n}} \sum_{i = 1 }^n \frac{ |Z_i|^p - 1}{ 1}\,.
\end{align*}
Since $F$ is twice differentiable in a neighborhood about the origin in $\mathbb{R}^3$, we may write
\begin{align} \label{F rep}
F(a,b,c) = 1 + \frac{1}{p} a - \frac{1}{q} (b + c) + \Psi(a,b,c),
\end{align}
where $\Psi:\mathbb{R}^3 \to \mathbb{R}$ is such that there exists $M, \delta \in(0,\infty)$ with
\begin{align} \label{Psi bound 1}
|| \Psi(x)|| \leq M ||x||_2^2 \quad \text{whenever} \quad ||x||_2 < \delta.
\end{align}
Combining \eqref{X rep} and \eqref{F rep}, we have
\begin{align*}
Y_n &:= \sqrt{n} \left( n^{1/q - 1/p} M_p(q)^{1/q} ||X_n||_q - 1 \right)\\
&= \xi_n - \frac{ W }{ p  \sqrt{n} } + \sqrt{n} \Psi  \left( \frac{ \xi^1_n}{ \sqrt{n}}, \frac{ \xi^2_n }{\sqrt{n}}, \frac{ W}{ n } \right),
\end{align*}
where $\xi_n = \frac{1}{p} \xi_n^1 - \frac{1}{q} \xi_n^2$. Let $G$ be a Gaussian random variable with variance $\sigma^2(p,q)$. In light of Lemma \ref{separating}, for any $\epsilon > 0$ we have the following bound
\begin{align} 
&\dint_{\mathsf{KS}} \left( Y_n , G \right) \nonumber \\
& \leq \dint_{\mathsf{KS}}( \xi_n , G) + \P \left[  \frac{ W}{ n }  > \epsilon/2 \right] + \P \left[ \left| \sqrt{n} \Psi  \left( \frac{ \xi^1_n}{ \sqrt{n}}, \frac{ \xi^2_n }{\sqrt{n}}, \frac{ W}{ n } \right) \right| > \epsilon /2  \right] + \frac{ \epsilon}{ \sqrt{ 2 \pi \sigma^2(p,q) } }.  \label{full eq}  
\end{align}
To give a brief overview of the remainder of the proof, first we use the Berry-Esseen theorem to show that the random variable $\xi_n$ is close to a Gaussian random variable with variance $\sigma^2(p,q)$ in terms of Kolmogorov-Smirnov distances, obtaining an upper bound for  $\dint_{\mathsf{KS}}( \xi_n , G) $. Then we obtain an upper bound for remaining terms. 

\subsection{A bound for $\dint_{\mathsf{KS}}(\xi_n , G)$ }
First of all, we note the random variable $\xi_n$ may be written $\xi_n = \frac{1}{\sqrt{n}} \sum_{ i=1}^n \mathcal{Z}_i$, where
\begin{align*}
\mathcal{Z}_i :=  \frac{ |Z_i|^q - M_p(q)}{ q M_p(q)} - \frac{ |Z_i|^p - 1}{ p },\qquad i\in\{1,\dots,n\}.
\end{align*} 
Each $\mathcal{Z}_i$ has variance $\sigma^2(p,q)$ as in \eqref{variance}. In particular, $\xi_n$ also has variance $\sigma^2(p,q)$. Let $\rho(p,q)$ denote the third moment of $|\mathcal{Z}_i|$. Then by the Berry-Esseen theorem (see Theorem \ref{thm:berry esseen}), if $G$ is a centered Gaussian random variable with variance $\sigma^2(p,q)$, then we have
\begin{align} \label{gauss eq} 
 \dint_{\mathsf{KS}}( \xi_n , G)  \leq K_{\mathsf{BE}} \frac{ \rho(p,q) }{ \sigma^3(p,q) \sqrt{n}} =: \frac{ C_{p,q} }{ \sqrt{n}}.
\end{align}

\subsection{A bound for the $\Psi$ term}
Now consider obtaining an upper bound for the term involving $\Psi$ in \eqref{full eq}. 
With $M$ and $\delta$ as in \eqref{Psi bound 1}, using Lemma \ref{lem:Psi bound} and the triangle inequality, we have
\begin{align} \label{mango}
\P \left[ \left| \sqrt{n} \Psi  \left( \frac{ \xi^1_n}{ \sqrt{n}}, \frac{ \xi^2_n }{\sqrt{n}}, \frac{ W}{ n } \right) \right| > \frac{\epsilon_n}{2}  \right]  \leq Q_n \left(  \frac{ \xi^1_n}{ \sqrt{n}} \right) + Q_n \left(  \frac{ \xi^2_n}{ \sqrt{n}} \right) + Q_n \left(   \frac{W}{n}   \right),
\end{align}
where for real-valued random variables $X$,
\begin{align*}
Q_n \left( X \right) := \P\left[ |X| > \sqrt{ \frac{ \epsilon_n}{ 6M \sqrt n }} \right] + \P \left[ | X| > \frac{\delta}{\sqrt{3}} \right].
\end{align*}
Let us start with an upper bound for 
\begin{align*}
Q_n \left(  \frac{ \xi^1_n}{ \sqrt{n}} \right)  =  \P\left[ |\xi^1_n| > \sqrt{ \frac{ \epsilon_n \sqrt{n}}{ 6M  }} \right] + \P\left[ |\xi^1_n| > \frac{\delta \sqrt{n }}{\sqrt{3}} \right] \,.
\end{align*}
We may use Lemma \ref{gaussish bound} to bound the tail probabilities of $\xi^1_n$. To set this up, let $m=n,  \beta_n = \theta n$ for some $\theta \in(0,1]$, 
\begin{align*}
\gamma_n  := \frac{ \E \left[ \left|  \frac{ |Z_1|^q - M_p(q)}{ M_p(q)} \right|^3 \right] }{ \E  \left[ \left|  \frac{ |Z_1|^q - M_p(q)}{ M_p(q)} \right|^2 \right]^{3/2} } \qquad \text{for every $n\in\N$},
\end{align*}
and let $\sigma_{\mathsf{max}}$ be the variance of $ \frac{ |Z_1|^q - M_p(q)}{ M_p(q)} $. In particular, whenever $\epsilon_n$ and $\beta_n \leq n/ \gamma_n^2$ are sequences such that
\begin{align} \label{ineq orange}
\sqrt{  \frac{ \epsilon_n \sqrt{n}}{ 6M }} \geq \sigma_{\mathsf{max}} \sqrt{ \log( \beta_n) }_+ \qquad \text{and} \qquad \frac{\delta \sqrt{n}}{\sqrt{3}} \geq \sigma_{\mathsf{max}} \sqrt{ \log( \beta_n) }_+
\end{align}
for every $n\in\N$, then
\begin{align} \label{consequence orange}
Q_n \left(  \frac{ \xi^1_n}{ \sqrt{n}} \right) \leq \frac{ 4 \left( K_{ \mathsf{BE}} + 1 \right)}{ \sqrt{\beta_n}}. 
\end{align}
In particular, consider setting $\beta_n = \theta_{p,q} n$, and $\epsilon_n := \Theta_{p,q} \frac{ \log(n)}{ \sqrt{n}}$, where $\theta_{p,q}$ is small enough and $\Theta_{p,q}$ is large enough so that both the inequalities in \eqref{ineq orange} hold for every $n\in\N$. Then there is a constant $C_{p,q}\in(0,\infty)$ such that with  $\epsilon_n := \Theta_{p,q} \frac{ \log(n)}{ \sqrt{n}}$, we have
\begin{align*}
Q_n \left(  \frac{ \xi^1_n}{ \sqrt{n}} \right) \leq \frac{C_{p,q}}{\sqrt{n}}
\end{align*}
for every $n\in\N$. Making an identical argument with the term $Q_n \left( \xi_n^2 / \sqrt{n} \right)$, it follows that there are sufficiently large constants $\Theta_{p,q}'$ and $C_{p,q}'$ such that taking $\epsilon_n := \Theta_{p,q}' \frac{ \log(n)}{ \sqrt{n}}$, we have
\begin{align*}
 Q_n \left(  \frac{ \xi^1_n}{ \sqrt{n}} \right) + Q_n \left(  \frac{ \xi^2_n}{ \sqrt{n}} \right)  \leq \frac{C_{p,q}'}{\sqrt{n}}
\end{align*}
for every $n\in\N$. Hence, by \eqref{mango} and the definition of $Q_n(W)$, with $\epsilon_n := \Theta_{p,q}' \frac{ \log(n)}{ \sqrt{n}}$ we have
\begin{align}
&\P \left[\left| \sqrt{n} \Psi  \left( \frac{ \xi^1_n}{ \sqrt{n}}, \frac{ \xi^2_n }{\sqrt{n}}, \frac{ W}{ n } \right) \right| > \frac{\epsilon_n}{2}  \right] \nonumber  \\
&\leq \frac{C_{p,q}}{\sqrt{n}} + \P \left[ W > \frac{\delta n}{\sqrt{3}} \right] + \P \left[W >  n \sqrt{ \frac{ \epsilon_n}{ 6M \sqrt n }}\right].  \label{gauss eq 2}
\end{align}

\subsection{Wrapping things together}
Recall our original bound \eqref{full eq} for $\dint_{\mathsf{KS}}(Y_n, G)$. Plugging the intermediate bounds \eqref{gauss eq}  and \eqref{gauss eq 2} into \eqref{full eq}, we recall that there are constants $\Theta_{p,q}', C_{p,q}, C_{p,q}' \in(0,\infty)$ such that with $\epsilon_n := \Theta_{p,q}' \frac{ \log(n)}{ \sqrt{n}}$ we have
\begin{align} \label{cucumber}
&\dint_{\mathsf{KS}} \left( Y_n , G \right) \\
& \leq \frac{ C_{p,q} }{ \sqrt{n}} + \frac{ C_{p,q}'}{  \sqrt{n} }  + \P \left[ W > \frac{\delta n}{\sqrt{3}} \right] + \P \left[ W >  n \sqrt{ \frac{ \epsilon_n}{ 6M \sqrt n }}\right]+ \P \left[  \frac{ W}{ n }  > \frac{\epsilon_n}{2}\right]  + \frac{ \epsilon_n}{ \sqrt{ 2 \pi \sigma^2(p,q) } }.  \nonumber
\end{align}
%Recall that $W$ is non-negative.  
For a sufficiently small constant $c_{p,q}\in(0,\infty)$, the terms in \eqref{apple} involving tail probabilities for $W$ may be collected by the simple bound
\begin{align} \label{pineapple}
\P \left[ W > \frac{\delta n}{\sqrt{3}} \right] + \P \left[ W >  n \sqrt{ \frac{ \epsilon_n}{ 6M \sqrt n }}\right] + \P \left[  \frac{ W}{ n }  > \frac{\epsilon_n}{2} \right] \leq \P \left[ W > c_{p,q} \sqrt{ n  \log(n)} \right],
\end{align} 
and it follows from plugging \eqref{pineapple} and the definition $\epsilon_n := \Theta_{p,q}' \frac{ \log(n)}{ \sqrt{n}}$ into \eqref{apple} that with $c_{p,q}$ as above, for a sufficiently large $C''_{p,q} \in(0,\infty)$ we have
\begin{align*}
\dint_{KS}(Y_n,G) \leq  C_{p,q}''  \frac{ \log(n)}{ \sqrt{n}} + \P \left[ W > c_{p,q} \sqrt{ n  \log(n)} \right],
\end{align*}
which completes the proof of Theorem \ref{thm:q norm}.

\subsection*{Acknowledgment} 
SJ and JP have been supported by the Austrian Science Fund (FWF) Project P32405 ``Asymptotic Geometric Analysis and Applications'' of which JP is principal investigator. JP has also been supported by a \textit{Visiting International Professor Fellowship} from the Ruhr University Bochum and its Research School PLUS.

\bibliographystyle{plain}
\bibliography{berry_esseen}

\end{document}